\newtheorem{theorem}{Theorem}[section]
\newtheorem{lemma}[theorem]{Lemma}
\newtheorem{proposition}[theorem]{Proposition}
\newtheorem{corollary}[theorem]{Corollary}
\theoremstyle{definition}
\newtheorem*{example}{Example}
\newtheorem*{remark}{Remark}
\numberwithin{equation}{section}
\newcommand{\CC}{\mathbb{C}}
\newcommand{\FF}{\mathbb{F}}
\newcommand{\QQ}{\mathbb{Q}}
\newcommand{\ZZ}{\mathbb{Z}}
\newcommand{\cA}{\mathcal{A}}
\newcommand{\cB}{\mathcal{B}}
\newcommand{\cP}{\mathcal{P}}
\newcommand{\cS}{\mathcal{S}}
\newcommand{\cT}{\mathcal{T}}
\newcommand{\cU}{\mathcal{U}}
\newcommand{\cX}{\mathcal{X}}
\newcommand{\fkgl}{\mathfrak{gl}}
\newcommand{\Hom}{\mathrm{Hom}}
\newcommand{\Ind}{\mathrm{Ind}}
\newcommand{\GL}{\mathrm{GL}}
\newcommand{\Inf}{\mathrm{Inf}}
\newcommand{\ch}{\mathrm{ch}}
\newcommand{\Irr}{\mathrm{Irr}}
\newcommand{\sh}{\mathrm{sh}}
\newcommand{\diag}{\mathrm{diag}}
\newcommand{\Id}{\mathrm{Id}}
\newcommand{\UT}{\mathrm{UT}}
\newcommand{\Sym}{\mathrm{Sym}}
\newcommand{\scf}{\mathrm{scf}}
\newcommand{\dd}{\displaystyle}
\newcommand{\scs}{\scriptstyle}
\newcommand{\scscs}{\scriptscriptstyle}
\newcommand{\vphi}{\varphi}
\newcommand{\spanning}{\textnormal{-span}}
\newcommand{\One}{{1\hspace{-.14cm} 1}}
\newcommand{\larc}[1]{\hspace{-.4ex}\overset{#1}{\frown}\hspace{-.4ex}}
\newcommand{\slarc}[1]{\overset{#1}{\frown}}
\def\adots{\mathinner{\mkern2mu\raise0pt\hbox{.}  
\mkern2mu\raise4pt\hbox{.}\mkern1mu
\raise7pt\vbox{\kern7pt\hbox{.}}\mkern1mu}}
\newcommand{\bl}{\mathrm{bl}}
\newcommand{\semisimple}{\mathrm{ss}}
\newcommand{\unipotent}{\mathrm{un}}
\newcommand{\cf}{\mathrm{cf}}
\newcommand{\uscf}{\mathrm{cf}^{\mathrm{un}}_{\mathrm{supp}}}
\newcommand{\ucf}{\mathrm{cf}^{\mathrm{un}}_{\mathrm{char}}}
\newcommand{\GGG}{\mathrm{ggg}}
\newcommand{\nn}{\mathrm{nn}}
\newcommand{\ctr}{\mathrm{ctr}}
\newcommand{\circast}{\bigcirc\hspace{-.3cm}\ast}
\newcommand{\comment}[1]{\ }
\tikzstyle{bsq}=[rectangle, draw, thick, minimum width=.75cm, minimum height=.75cm]
\renewcommand{\@makefnmark}{\mbox{\textsuperscript{}}}
\begin{document}

\title{The combinatorics of $\GL_n$\\ generalized Gelfand--Graev characters}
\author{Scott Andrews\\ Department of Mathematics\\ Dartmouth College \\ \textsf{scott.d.andrews@dartmouth.edu} \and Nathaniel Thiem\\ Department of Mathematics\\ University of Colorado \textbf{Boulder}\\
 \textsf{thiemn@colorado.edu}}

\date{}

\maketitle

\begin{abstract}
Introduced by Kawanaka in order to find the unipotent representations of finite groups of Lie type, generalized Gelfand--Graev characters have remained somewhat mysterious.  Even in the case of the finite general linear groups, the combinatorics of their decompositions has not been worked out.  This paper re-interprets Kawanaka's definition in type $A$ in a way that gives far more flexibility in computations.  We use these alternate constructions to show how to obtain generalized Gelfand--Graev representations directly from the maximal unipotent subgroups.   We also explicitly decompose the corresponding generalized Gelfand--Graev characters in terms of unipotent representations, thereby recovering the Kostka--Foulkes polynomials as multiplicities.
\end{abstract}

\section{Introduction}

There has been considerable progress in recent years on the combinatorial representation theory of finite unipotent groups (some recent examples include \cite{MR3117506,MR3323980, MR3295975, MR3313496}).  For example, the representation theory of the maximal unipotent subgroup $\UT_n(\FF_q)$ of the finite general linear group $\GL_n(\FF_q)$ has developed from a wild problem to a combinatorial theory based on set partitions \cite{MR1338979,yan}.  Furthermore, by gluing together these theories we get a Hopf structure analogous to the representation theory of the symmetric groups $S_n$ (where we replace the symmetric functions of $S_n$ with symmetric functions in non-commuting variables for $\UT_n(\FF_q)$) \cite{MR2880223}.

An  underlying philosophy of this paper is that the Bruhat decomposition of a finite group of Lie type 
$$G=\bigsqcup_{w\in W\atop t\in T}UtwU$$ 
gives a factorization into a maximal unipotent $U$-part, a torus $T$-part , and a Weyl group $W$-part.  The traditional approach to studying the representation theory of these groups has been to tease out the influence of the representation theory of $W$ and 
$T$.  With the representation theory of $U$ well-known to be wild, this approach seemed natural and eventually led to Lusztig's classification of the irreducible representations of $G$ \cite{MR742472}.  

Lusztig's indexing, however, is not overly constructive.  In particular, we would like combinatorial constructions of the unipotent representations of $G$, and here the representation theory of $U$ has untapped potential.   The most natural way to find unipotent representations of $G$ is to induce representations from $U$.  There are a number of known approaches:
\begin{description}
\item[(GG)]  The Gelfand--Graev representation is obtained by inducing a linear representation of $U$ in general position;
\item[(DGG)] The degenerate Gelfand--Graev representations generalize the GG representations by inducing arbitrary linear representations of $U$;
\item[(GGG)]  The generalized Gelfand--Graev representations provide a different generalization by instead inducing certain linear representations in general position from specified subgroups of $U$.
\end{description}
The GG representations were introduced to find cuspidal representations of $G$, and it was hoped that the DGG representations could identify all of the unipotent representations of $G$.  While this works for $\GL_n(\FF_q)$ \cite{MR643482}, in general the DGG representations are insufficient.   Kawanaka introduced the GGG representations \cite{MR803335} as a more effective method; the trade-off is that they appear to be more difficult to work with.  

GGG representations have recently seen more attention, especially through the work of Taylor \cite{MR3081621} and Geck--H\'ezard \cite{MR2468596}.  Their work approaches these representations via Deligne--Lusztig theory, whereas this paper will focus on directly on the underlying unipotent groups.   Even for $\GL_n(\FF_q)$ these representations are not particularly well-understood, and this paper hopes to develop this example as a model for tackling other types.

Each GGG representation $\Ind_{U'}^{\GL_n(\FF_q)}(\gamma)$ is induced from a linear representation $\gamma$ of a subgroup $U'\subseteq \UT_n(\FF_q)$.  The construction given by Kawanaka uses the root combinatorics of the corresponding Lie algebra to identify $U'$ and $\gamma$; however, inducing makes these specific choices somewhat artificial.  Our main result of Section \ref{SectionGGGDefinition} (Corollary \ref{GGGSufficiency}) gives a more direct set of sufficient conditions for pairs $(\gamma, U')$ that induce to GGG representations.

An alternative approach is to identify representations of $\UT_n(\FF_q)$ that induce to GGG representations;  that is, are there choices for $(\gamma,U')$ such that we already know the $\UT_n(\FF_q)$-module $\Ind_{U'}^{\UT_n(\FF_q)}(\gamma)$?  Our  main results of Section \ref{SectionSupercharacters} (Corollaries \ref{GGGConstruction} and \ref{GGGMult1Construction}) show that these induced representations may in fact be chosen so that they afford supercharacters of a natural supercharacter theory \cite{MR2373317} of $\UT_n(\FF_q)$; this supercharacter theory, which is built on non-nesting set partitions, is described by Andrews in \cite{andrews3}. From this point of view, we could conduct all our constructions using known monomial representations of $\UT_n(\FF_q)$.

Given our understanding of GGG representations from the previous sections, we decompose the corresponding characters into unipotent characters of 
$\GL_n(\FF_q)$ using Green's symmetric function description \cite{MR0072878,MR1354144}.  Our main result of Section \ref{SectionSymmetricFunctions} (Theorem \ref{unipotentmultiplicity}) is that the multiplicities of the unipotent characters are given exactly by Kostka--Foulkes polynomials; this effectively makes the GGG characters $q$-analogues of the DGG characters, and gives another representation theoretic interpretation for these polynomials.   This result also implies that the projections of the GGG characters to  the unipotent characters are the transformed Hall--Littlewood polynomials.

We consider this paper to be the first steps in a larger program of constructing unipotent modules for finite groups of Lie type.  In \cite{andrews4}, Andrews uses the constructions of this paper to explicitly construct the unipotent modules for $\GL_n(\FF_q)$.  However, for other types there is more work to do.  For type $C$ we have an idea of what the analogues of Corollaries \ref{GGGConstruction} and \ref{GGGMult1Construction}  should be, but even for type $B$ there is again more work.  We hope this paper can give a road map for future constructions.

\vspace{.5cm}

\noindent\textbf{Acknowledgements.}  The authors would like to thank Sami Assaf for pointing us to Haiman's description of transformed Hall--Littlewood polynomials.  Thiem was in part supported by NSA H98230-13-1-0203.

\section{Preliminaries}

This section introduces some of the background topics for the paper.  In particular, we give brief introductions to the representation theory of the finite general linear groups, our set partition combinatorics, the unipotent subgroups of greatest interest, and the notion of a supercharacter theory.

\subsection{The combinatorial representation theory of $\GL_n(\FF_q)$}\label{GLnCombinatorics}

In this section we present an indexing of the irreducible characters of $\GL_n(\FF_q)$ and describe these characters in terms of symmetric functions. This result is initially due to Green \cite{MR0072878} and is also found in \cite{MR1354144}.

If $n = mk$, $\varphi \in  \Hom(\FF_{q^k}^\times, \CC^\times)$, and $x \in \FF_{q^n}^\times$, define an injective homomorphism
$$\begin{array}{c@{\ }c@{\ }c}  \Hom(\FF_{q^k}^\times, \CC^\times) & \longrightarrow &  \Hom(\FF_{q^n}^\times, \CC^\times)\\ \varphi & \mapsto & \vphi\circ N_{\FF_{q^n}/\FF_{q^k}}\end{array}
\quad \text{where}\quad 
\begin{array}{r@{\ }c@{\ }c@{\ }c}N_{\FF_{q^n}/\FF_{q^k}} :&\FF_{q^n} & \longrightarrow & \FF_{q^k}\\ & t & \mapsto & t^{1+q^m+q^{2m}+\cdots+q^{(k-1)m}}.\end{array}
$$
With these identifications, let
\[
        \hat{\bar\FF}_q^\times = \bigcup_{n \geq 1}  \Hom(\FF_{q^n}^\times, \CC^\times).
\]

Let $\sigma:\bar{\FF}_q^\times \to \bar{\FF}_q^\times$ be the Frobenius map defined by $\sigma(x) = x^q$. The group $\langle \sigma \rangle$ acts on $\hat{\bar\FF}_q^\times$ by $\sigma(\varphi)(x) = \varphi(\sigma(x))$. Let
\begin{equation*}
        \Theta = \{ \text{$\langle\sigma\rangle$-orbits in $\hat{\bar\FF}_q^\times$}\} \qquad \text{and}\qquad \Phi  = \{\text{$\langle\sigma\rangle$-orbits in $\bar{\FF}_q^\times$}\}.
\end{equation*}
If $\cP$ is the set of integer partitions and $\cX$ is a set, define the set of \emph{$\cX$-partitions} $\cP^\cX$ to be the set
$$\cP^\cX=\bigg\{\begin{array}{r@{\ }c@{\ }c@{\ }c}\lambda: & \cX & \longrightarrow & \cP\\ & \vphi & \mapsto & \lambda^{(\vphi)}\end{array}\bigg\}.$$
 For $\cX\in \{\Theta,\Phi\}$, the \emph{size}  of $\lambda \in \cP^\cX$ is
\[
        |\lambda| = \sum_{\varphi \in \cX} |\varphi||\lambda^{(\varphi)}|. 
\]

\begin{theorem}[{\cite[Theorem 14]{MR0072878}}]
The complex irreducible characters of $\GL_n(\FF_q)$ are indexed by the functions $\lambda \in \cP^\Theta$ such that $|\lambda| = n$, and the conjugacy classes of $\GL_n(\FF_q)$ are indexed by the functions $\mu \in \cP^\Phi$ such that $|\mu| = n$.
\end{theorem}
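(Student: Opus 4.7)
The plan is to prove the two halves independently.

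For the conjugacy class claim, I would invoke rational canonical form. An element $g \in \GL_n(\FF_q)$ makes $\FF_q^n$ a finitely generated $\FF_q[x]$-module on which $x$ acts invertibly, and by the structure theorem this decomposes uniquely up to reordering as a direct sum of cyclic modules $\FF_q[x]/(f^e)$ with $f$ a monic irreducible polynomial other than $x$. The map $f \mapsto \{\alpha \in \bar{\FF}_q^\times : f(\alpha) = 0\}$ is a bijection between such $f$ of degree $d$ and $\sigma$-orbits $\varphi \in \Phi$ of size $d$. Collecting the exponents $e$ appearing for each $\varphi$ into a partition $\mu^{(\varphi)}$ produces $\mu \in \cP^\Phi$, and $\dim_{\FF_q}\FF_q^n = n$ forces $\sum_\varphi |\varphi||\mu^{(\varphi)}| = |\mu| = n$. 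Distinct $\mu$ give non-similar (hence non-conjugate) matrices, and every conjugacy class arises this way.

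For the character claim, I would split into a counting step and a construction step. The counting step is the equality $|\cP^\Theta_n| = |\cP^\Phi_n|$: this follows because the number of $\sigma$-orbits of each size $d$ in $\hat{\bar\FF}_q^\times$ equals the number in $\bar{\FF}_q^\times$. A non-canonical $\langle\sigma\rangle$-equivariant isomorphism $\Hom(\FF_{q^d}^\times, \CC^\times) \cong \FF_{q^d}^\times$, obtained by fixing a primitive $(q^d-1)^{\text{th}}$ root of unity compatibly in $d$, transports orbit structure. Since the number of irreducible characters of $\GL_n(\FF_q)$ equals the number of conjugacy classes, exhibiting $|\cP^\Theta_n|$ distinct irreducibles naturally labelled by $\cP^\Theta_n$ will suffice.

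The main obstacle is then this construction, which I would borrow from Green's original argument. First, for each $\varphi \in \Theta$ of size $m$, produce a cuspidal irreducible character $\rho_\varphi$ of $\GL_m(\FF_q)$ attached to $\varphi$, realizable up to sign as a Deligne--Lusztig character for the anisotropic torus $\FF_{q^m}^\times \subset \GL_m(\FF_q)$ twisted by a representative of $\varphi$. Second, given $\lambda \in \cP^\Theta_n$, apply Harish-Chandra induction from a standard Levi $L_\lambda = \prod_\varphi \GL_{|\varphi|}(\FF_q)^{\ell(\lambda^{(\varphi)})}$ to a tensor product of cuspidals built from the $\rho_\varphi$, and decompose the result via Howlett--Lehrer theory: the endomorphism algebra of the induced representation is a tensor product of group algebras of symmetric groups $\fkS_{\ell(\lambda^{(\varphi)})}$, so its irreducible constituents are indexed by tuples of partitions, singling out one constituent for each collection $(\lambda^{(\varphi)})_\varphi$. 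The crucial technical point, and the hardest part of the plan, is to show that distinct $\lambda$ yield disjoint sets of constituents; by the counting step above, this disjointness is enough to conclude that the resulting characters exhaust $\Irr(\GL_n(\FF_q))$.
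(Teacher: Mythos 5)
The paper does not prove this theorem; it is cited verbatim as Green's \cite[Theorem 14]{MR0072878}, so there is no internal proof to compare against. Your outline is a reasonable modern route, but it is not ``Green's original argument'' as you suggest: Green (1955) predates Deligne--Lusztig theory (1976) and Howlett--Lehrer theory by two decades, and he built the cuspidal (``primary'') characters via Brauer's characterization of characters and Hall-polynomial recursions, not $\ell$-adic cohomology. Your conjugacy-class half via rational canonical form and your orbit-counting step (matching sizes of $\sigma$-orbits in $\bar\FF_q^\times$ and in $\hat{\bar\FF}_q^\times$, combined with $|\Irr(G)|=\#\{\text{conj.\ classes}\}$) are both correct and standard.

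There is one genuine slip in the construction step: the Levi should be
$L_\lambda = \prod_{\varphi\in\Theta} \GL_{|\varphi|}(\FF_q)^{\times |\lambda^{(\varphi)}|}$,
with $|\lambda^{(\varphi)}|$ (not $\ell(\lambda^{(\varphi)})$) copies of $\GL_{|\varphi|}$. Only with this exponent is the total rank $\sum_\varphi |\varphi|\,|\lambda^{(\varphi)}| = |\lambda| = n$, and only then is the relative Weyl group $\prod_\varphi\fkS_{|\lambda^{(\varphi)}|}$, whose irreducible characters are labelled precisely by partitions of size $|\lambda^{(\varphi)}|$, i.e.\ by the data $\lambda^{(\varphi)}$ itself. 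With $\ell$ in the exponent the rank count already fails for $\lambda^{(1)}=(n)$. A further small imprecision: the endomorphism algebra of the Harish--Chandra induced module is a (parameter $q^{|\varphi|}$) Iwahori--Hecke algebra, not literally the group algebra of $\fkS_k$; for $\GL_n$ the Howlett--Lehrer cocycle vanishes and this Hecke algebra is abstractly isomorphic to $\CC\fkS_k$ because $q$ is a prime power, so the partition-labelling survives, but this deserves a sentence. Finally, the disjointness you flag as the ``crucial technical point'' is in fact the easy part --- disjointness of Harish--Chandra series for non-conjugate cuspidal pairs, and of constituents within a series, is packaged in Harish--Chandra/Howlett--Lehrer theory. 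The real work, both for Green and in the Deligne--Lusztig approach, is producing the cuspidal characters $\rho_\varphi$ and establishing their basic properties.
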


The work of Green \cite{MR0072878} allows us to realize the characters of $\GL_n(\FF_q)$ as symmetric functions. For background on the space of symmetric functions, see \cite{MR1354144}; we will only present the material that is necessary for our results.

The $\CC$-vector space
\[
     \cf(\GL) = \bigoplus_{n \geq 1} \cf(\GL_n),\quad \text{where}\quad   \cf(\GL_n) = \{\text{class functions of $\GL_n(\FF_q)$}\},
\]
has a graded commutative $\CC$-algebra structure with multiplication given by parabolic induction: for $\chi\in \cf(\GL_m)$ and $\psi\in \cf(\GL_n)$,
\begin{equation}\label{ParabolicInduction}
        \psi \circ \chi = \Ind_{P_{(m,n)}}^{\GL_{m+n}(\FF_q)}\Inf_{L_{(m,n)}}^{P_{(m,n)}} (\psi \times \chi),
\end{equation}
where
$$L_{(m,n)}=\left[\begin{array}{@{}c|c@{}} \GL_m(\FF_q) & 0\\ \hline 0 &\GL_n(\FF_q) \end{array}\right]\subseteq P_{(m,n)}=\left[\begin{array}{@{}c|c@{}} \GL_m(\FF_q) & \ast\\ \hline 0 &\GL_n(\FF_q) \end{array}\right].$$

For each $f \in \Phi$, let $X^{(f)}=\{X_1^{(f)},X_2^{(f)},\ldots\}$ be a countably infinite set of variables. We define
$$\Sym(\GL)=\bigotimes_{f\in \Phi} \Sym(X^{(f)}),$$
where $\Sym(X^{(f)})$ is the $\CC$-algebra of symmetric functions
\begin{align*}
        \Sym(X^{(f)}) &= \CC\spanning\{p_\nu(X^{(f)}) \mid \nu\in\cP\}\\
        &= \CC\spanning\{s_\nu(X^{(f)}) \mid \nu\in\cP\},
        \end{align*}
written in terms of the power-sum basis and the Schur function basis, respectively.   
     
Note that we have a grading
$$\Sym(\GL)=\bigoplus_{n\geq 0} \Sym_n(\GL),\quad \text{where}\quad \Sym_n(\GL)=\CC\spanning\left\{\left.p_\mu=\prod_{f\in\Phi} p_{\mu^{(f)}}(X^{(f)})\;\right|\; |\mu|=n\right\}.$$

For $\lambda=(\lambda_1,\ldots,\lambda_\ell)$ a partition of $k$ and $X$ an infinite set of variables, define
\[
        \widetilde{P}_\lambda(X;q) = q^{-n(\lambda)}P_{\lambda}(X;q^{-1}), \quad\text{where}\quad n(\lambda) = \sum_{i=1}^\ell (i-1)\lambda_i,
\]
and $P_{\lambda}(X;q)$ is the Hall--Littlewood symmetric function. For $\mu \in \cP^\Phi$, define
\[
        \widetilde{P}_\mu(X;q) = \prod_{f \in \Phi} \widetilde{P}_{\mu^{(f)}}(X^{(f)};q^{|f|}).
\]
Each $\mu\in \cP^\Phi$ corresponds to a conjugacy class of $\GL_{|\mu|}(\FF_q)$; we define
the indicator functions $\delta_\mu:\GL_{|\mu|}(\FF_q) \rightarrow \CC$ by
$$\delta_\mu(g)=\left\{\begin{array}{@{}ll} 1 & \text{if $g$ has conjugacy type $\mu$,}\\ 0 & \text{otherwise.}\end{array}\right.$$

\begin{theorem}[{\cite[IV.4.1]{MR1354144}}]
The characteristic  function
$$\begin{array}{r@{\ }c@{\ }c@{\ }c}
        \ch :& \cf(\GL) & \longrightarrow & \Sym(\GL) \\
            &\delta_\mu & \mapsto  &\widetilde{P}_\mu(X;q).
\end{array}
$$
is an isomorphism of graded $\CC$-algebras.
\end{theorem}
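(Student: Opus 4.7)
The plan is to verify (i) that $\ch$ is a graded $\CC$-linear bijection and (ii) that it intertwines parabolic induction with symmetric function multiplication. For (i), the set $\{\delta_\mu : \mu \in \cP^\Phi,\ |\mu|=n\}$ is a basis of $\cf(\GL_n)$ by Green's indexing of conjugacy classes recalled just before the theorem. Each factor $\Sym(X^{(f)})$ has the modified Hall--Littlewood functions $\{\widetilde{P}_\nu(X^{(f)};q^{|f|}) : \nu \in \cP\}$ as a $\CC$-basis (the Hall--Littlewood basis at the positive real specialization $t = q^{-|f|}$, rescaled by $q^{-n(\nu)}$), so the tensor product $\Sym_n(\GL)$ has $\{\widetilde{P}_\mu : |\mu|=n\}$ as a basis in each degree. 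The map $\ch$ sends basis to basis in a grading-preserving way, hence is a graded linear isomorphism.

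For (ii), by $\CC$-bilinearity it suffices to check $\ch(\delta_\mu \circ \delta_\nu) = \widetilde{P}_\mu \cdot \widetilde{P}_\nu$. The first move is to reduce to a single primary component: a $\GL_{m+n}(\FF_q)$-conjugacy class of type $\lambda \in \cP^\Phi$ splits as an independent product of primary blocks indexed by $\Phi$ (rational canonical form), and the restriction to the Levi $L_{(m,n)}$ used in parabolic induction respects this block decomposition. Correspondingly, $\widetilde{P}_\mu \cdot \widetilde{P}_\nu$ factors over $\Phi$ by definition. So one is reduced to showing multiplicativity when $\mu,\nu$ are supported on a single $f \in \Phi$, which in turn reduces to the analogous statement for (unmodified) Hall--Littlewood polynomials in one set of variables with parameter $t = q^{-|f|}$, up to explicit $q^{-n(\cdot)}$ factors.

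On the symmetric function side, the classical formula $P_\mu \cdot P_\nu = \sum_\lambda g^\lambda_{\mu\nu}(t)\, P_\lambda$ expresses the structure constants as Hall polynomials $g^\lambda_{\mu\nu}(t)$. The main obstacle is to produce the matching identity on the class-function side: one must expand the induced character $\delta_\mu \circ \delta_\nu$ against each $\delta_\lambda$ and verify that the coefficient is the Hall polynomial $g^\lambda_{\mu\nu}(q^{-|f|})$ with exactly the $q$-power normalization that corresponds, under $\ch$, to passing from $P_\lambda$ to $\widetilde{P}_\lambda$. This is essentially Green's original argument --- expand the induction formula
$$(\delta_\mu \circ \delta_\nu)(g) = \frac{1}{|P_{(m,n)}|}\sum_{\substack{x \in \GL_{m+n}(\FF_q)\\ x g x^{-1} \in P_{(m,n)}}} (\delta_\mu \times \delta_\nu)(\overline{xgx^{-1}})$$
and interpret the count as enumerating $g$-stable flags in $\FF_q^{m+n}$ whose successive quotients have Jordan types $\mu^{(f)}$ and $\nu^{(f)}$ in each primary component, which is precisely the combinatorial definition of the Hall polynomial. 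The technical weight of the theorem lies in this identity and the careful tracking of $q$-powers; both are carried out in Macdonald IV.2--4.
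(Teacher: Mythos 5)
The paper does not prove this theorem; it cites it directly from Macdonald \cite[IV.4.1]{MR1354144} (Green's theorem as reorganized by Macdonald). Your outline correctly captures the shape of that argument: bijectivity because $\{\delta_\mu\}$ and $\{\widetilde{P}_\mu\}$ are graded bases, then multiplicativity via the primary decomposition (rational canonical form) and a Hall-polynomial count of $g$-stable flags. The reduction to one $f\in\Phi$ and the flag-counting interpretation of parabolic induction are exactly the right moves.

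However, the formulas you wrote at the crux are off in a way that matters. You assert $P_\mu\cdot P_\nu=\sum_\lambda g^\lambda_{\mu\nu}(t)P_\lambda$, identifying the structure constants of the unmodified Hall--Littlewood $P$'s with Hall polynomials; this is not correct. In Macdonald's notation (III.3) the structure constants of $P_\lambda(x;t)$ are $f^\lambda_{\mu\nu}(t)$, which differ from the Hall polynomials by a monomial in $t$, namely $f^\lambda_{\mu\nu}(t)=t^{\,n(\lambda)-n(\mu)-n(\nu)}g^\lambda_{\mu\nu}(t^{-1})$. Separately, the class-function side produces $g^{\lambda^{(f)}}_{\mu^{(f)}\nu^{(f)}}(q^{|f|})$ (a count of subgroups of a module over a local ring with residue field of size $q^{|f|}$), not $g^\lambda_{\mu\nu}(q^{-|f|})$. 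The point of the $q^{-n(\cdot)}$ twist in $\widetilde{P}_\mu(X;q)=q^{-n(\mu)}P_\mu(X;q^{-1})$ is precisely that it converts $f$'s into $g$'s: one has $\widetilde{P}_{\mu^{(f)}}\cdot\widetilde{P}_{\nu^{(f)}}=\sum_\lambda g^{\lambda}_{\mu^{(f)}\nu^{(f)}}(q^{|f|})\,\widetilde{P}_{\lambda}$ directly, which then matches the flag count term by term. So the cleanest route does not pass through the unmodified $P$'s at all; working with $\widetilde{P}$ from the start makes the structure constants literally the same on both sides. Since you explicitly defer the $q$-power bookkeeping to Macdonald --- as the paper itself does by citation --- this is a reparable slip rather than a wrong approach, but as written the two displayed identities you rely on are not the ones that close the argument.
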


To describe the images of the irreducible characters of $\GL_n(\FF_q)$ under the characteristic map, we introduce a new set of variables. For each $\varphi \in \Theta$, let $Y^{(\varphi)}=\{Y^{(\vphi)}_1,Y^{(\vphi)}_2,\ldots\}$ be the countably infinite set of variables completely determined by 
\begin{equation}\label{VariableRelation}
        p_k(Y^{(\varphi)}) = (-1)^{k|\varphi|-1}\sum_{f \in \Phi\atop f\subseteq \FF_{q^{k|\varphi|}}} \bigg(\sum_{x\in f} \varphi(x) \bigg) p_{\frac{k|\varphi|}{|f|}}(X^{(f)}),\quad \text{for $k\in \ZZ_{\geq 1}$}.
\end{equation}

For $\lambda$ a partition of $n$, let $s_\lambda(Y)$ denote the Schur function. For $\nu \in \cP^\Theta$, let
\begin{equation} \label{IrreducibleGLCharacters}
        s_\nu = \prod_{\varphi \in \Theta}s_{\nu^{(\varphi)}}(Y^{(\varphi)}).
\end{equation}

\begin{theorem}[{\cite[IV.6.8]{MR1354144}}] The set 
$$\{\ch^{-1}(s_\nu)\mid \nu\in \cP, |\nu|=n\}$$
is exactly the set of irreducible characters of $\GL_n(\FF_q)$.
\end{theorem}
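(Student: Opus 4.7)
The plan is to follow Green's classical argument, building on the graded algebra isomorphism $\ch$ from the preceding theorem. The strategy has three layers: (i) upgrade $\ch$ to an isometry for natural inner products on each side; (ii) observe that $\{s_\nu : \nu \in \cP^\Theta, |\nu|=n\}$ is an orthonormal basis of $\Sym_n(\GL)$ whose $\ch^{-1}$-images are virtual characters; (iii) fix the overall signs.

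For (i), I would equip each $\cf(\GL_n)$ with the standard character inner product (and declare distinct degrees orthogonal), and equip $\Sym(\GL)$ with the tensor product over $f \in \Phi$ of the Hall inner products on $\Sym(X^{(f)})$ at parameter $q^{|f|}$. The main technical step is the identity
\[
\langle \widetilde{P}_\mu, \widetilde{P}_\nu \rangle = \delta_{\mu \nu} \, |Z_{\GL_{|\mu|}(\FF_q)}(g_\mu)|^{-1},
\]
where $g_\mu$ is a class representative of type $\mu$. Since the right-hand side equals $\langle \delta_\mu, \delta_\nu \rangle_{\GL_{|\mu|}}$, this promotes $\ch$ to an isometry.

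For (ii), the defining relation (\ref{VariableRelation}) is, up to scalar, a character-table change of basis for the cyclic groups $\FF_{q^n}^\times$, so the power sums $p_k(Y^{(\vphi)})$ inherit the same orthogonality relations as the $p_k(X^{(f)})$, and $\{s_\nu\}$ is an orthonormal basis of $\Sym_n(\GL)$. Virtuality of each $\ch^{-1}(s_\nu)$ then follows because the transition entries from the basis $\{s_\nu\}$ to the basis $\{\widetilde{P}_\mu\}$ are polynomials in $q$ (essentially compositions of Kostka--Foulkes polynomials with the $X$--$Y$ change of variables) taking integer values at our fixed $q$. A norm-one virtual character is $\pm$ an irreducible character; for (iii), positivity of the dimension $\ch^{-1}(s_\nu)(1)$, which evaluates to a product of $q$-hook-length factors, rules out the minus sign. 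Since $|\cP^\Theta_n|$ equals the number of conjugacy classes (hence of irreducible characters) of $\GL_n(\FF_q)$, the set $\{\ch^{-1}(s_\nu)\}$ exhausts the irreducibles.

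The main obstacle is the isometry in (i), which requires reconciling two combinatorial formulas for $|Z_{\GL_{|\mu|}(\FF_q)}(g_\mu)|$: the factorization over Jordan types on the group side, and the factorization of Hall--Littlewood norms across $f \in \Phi$ on the symmetric function side. This computation is carried out in \cite[III.7]{MR1354144} and constitutes the crux of Green's theorem; steps (ii) and (iii) are essentially formal once the isometry is in hand.
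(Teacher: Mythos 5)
The paper does not prove this statement; it imports it directly from Macdonald \cite[IV.6.8]{MR1354144} (following Green), so there is no internal proof to compare against. Your outline follows the standard template for Green's theorem --- upgrade $\ch$ to an isometry, show $\{s_\nu\}$ is an orthonormal basis, show each $\ch^{-1}(s_\nu)$ is a virtual character, then fix signs --- and steps (i), (ii) and the sign check are handled correctly and in the same spirit as Macdonald IV.4--IV.6.

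However, your virtuality step has a genuine gap, and it is precisely the hard part of the theorem. You claim virtuality follows because the transition matrix from $\{s_\nu\}$ to $\{\widetilde{P}_\mu\}$ has integer entries at the fixed prime power $q$. This fails for two independent reasons. First, the entries need not be rational integers: under $\ch^{-1}$ they are the putative character values $\chi^\nu(g_\mu)$, and unwinding (\ref{VariableRelation}) introduces the Gauss-type sums $\sum_{x\in f}\varphi(x)$, which are algebraic integers (sums of roots of unity) but in general lie outside $\ZZ$. Second, and more fundamentally, even integrality of these entries would not suffice: an integral combination of the class-indicator functions $\delta_\mu$ is not automatically a virtual character, because the $\delta_\mu$ themselves are not in the character lattice --- for instance in $\ZZ/2\ZZ$ the indicator of the identity equals $\tfrac{1}{2}(\chi_{\mathrm{triv}}+\chi_{\mathrm{sgn}})$. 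What Green and Macdonald actually do here is exhibit a family of class functions that are \emph{manifestly} virtual characters (built from parabolic inductions and Brauer lifts, the ``basic'' characters whose $\ch$-images generate $\Sym(\GL)$ as a ring) and then show that the $s_\nu$ lie in the $\ZZ$-lattice these span. That explicit construction is the real content of the theorem, and it cannot be replaced by an integrality observation about the transition matrix alone.
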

In light of this result, for $\nu \in \cP^{\Theta}$, let 
$$\chi^{\nu} = \ch^{-1}(s_{\nu}).$$

Let $\alpha = (\alpha_1,\alpha_2,\hdots,\alpha_k)$ be a composition of $n$; define
\[
        P_\alpha = \left(\begin{array}{c|c|c|c}
\GL_{\alpha_1}(\FF_q) & * & \hdots & * \\ \hline
0 & \GL_{\alpha_2}(\FF_q) & \hdots & * \\ \hline
\vdots & \vdots & \ddots & \vdots \\ \hline
0 & 0 & \hdots & \GL_{\alpha_k}(\FF_q) \end{array}\right)
\]
to be the \emph{parabolic subgroup} of shape $\alpha$. 
If $\One_k$ is the trivial character of $\GL_k(\FF_q)$, then we have the following result.

\begin{lemma}\label{PermutationCharacterDecomposition}
Let $\lambda=(\lambda_1,\ldots,\lambda_\ell)$ be a partition of $n$; then
\[
        \Ind_{P_{\lambda}}^{\GL_n(\FF_q)}(\One)=\One_{\lambda_1}\circ\cdots\circ\One_{\lambda_\ell} = \sum_{\mu \vdash n} K_{\mu'\lambda} \chi^{\mu^{(1)}},
\]
where the $K_{\mu'\lambda}$ are the Kostka numbers (see \cite[I.6.4]{MR1354144}). In particular, $\langle \Ind_{P_{\lambda}}^G(\One), \chi^\mu \rangle = 0$ unless $\mu' \succeq \lambda$.
\end{lemma}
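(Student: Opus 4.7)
The plan is to pass to the symmetric function side via the characteristic map $\ch$ and reduce everything to a classical Kostka identity. Since $\ch$ is an isomorphism of graded $\CC$-algebras that turns parabolic induction into multiplication, iterating \eqref{ParabolicInduction} gives
\[
\Ind_{P_\lambda}^{\GL_n(\FF_q)}(\One) = \One_{\lambda_1}\circ\cdots\circ\One_{\lambda_\ell},
\]
and after applying $\ch$ the right-hand side becomes a product $\prod_i \ch(\One_{\lambda_i})$ in $\Sym(\GL)$.

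The first substantive step is to identify $\ch(\One_k) = e_k(Y^{(1)}) = s_{(1^k)}(Y^{(1)})$, where $1\in\Theta$ denotes the orbit of the trivial character. To do this I would expand $\One_k = \sum_{\mu} \delta_\mu$ (sum over all conjugacy types $\mu\in\cP^\Phi$ with $|\mu|=k$, since $\One_k$ takes the value $1$ everywhere), apply $\ch$ termwise to get $\sum_\mu \widetilde{P}_\mu(X;q)$, and then invoke the sign $(-1)^{k|\vphi|-1}$ encoded in \eqref{VariableRelation}. On the $Y^{(1)}$-side this sign is exactly the classical involution $\omega$, and it is what converts the natural ``$h$-like'' expansion of $\sum_\mu \widetilde{P}_\mu$ into $e_k(Y^{(1)})$. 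Multiplying over $i$ yields
\[
\ch\bigl(\Ind_{P_\lambda}^{\GL_n(\FF_q)}(\One)\bigr) = \prod_{i=1}^\ell e_{\lambda_i}(Y^{(1)}) = e_\lambda(Y^{(1)}).
\]

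The second step is pure symmetric function combinatorics. From the classical Kostka expansion $h_\lambda = \sum_\mu K_{\mu\lambda}s_\mu$ and the involution $\omega$ (which sends $h_n\mapsto e_n$ and $s_\mu\mapsto s_{\mu'}$), we get
\[
e_\lambda \;=\; \omega(h_\lambda) \;=\; \sum_\mu K_{\mu\lambda}s_{\mu'} \;=\; \sum_\mu K_{\mu'\lambda}s_\mu.
\]
Inverting $\ch$ and using $\chi^{\mu^{(1)}} = \ch^{-1}(s_\mu(Y^{(1)}))$ from \eqref{IrreducibleGLCharacters} produces the claimed decomposition. The ``in particular'' assertion is then immediate from the standard fact that $K_{\alpha\beta}=0$ unless $\alpha \succeq \beta$ in dominance order, applied with $\alpha=\mu'$ and $\beta=\lambda$.

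I expect the main obstacle to be the first step, since every subsequent manipulation is a textbook symmetric function identity. The identification $\ch(\One_k) = e_k(Y^{(1)})$ depends crucially on the sign in \eqref{VariableRelation}; were that sign absent, the lemma would instead read $K_{\mu\lambda}$ rather than $K_{\mu'\lambda}$. So the only real content is verifying that in Green's parametrization as set up in this paper, the trivial character of $\GL_k(\FF_q)$ corresponds to the $\Theta$-partition $(1^k)^{(1)}$ rather than $(k)^{(1)}$.
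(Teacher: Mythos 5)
The paper gives no proof of this lemma; it is stated as a background fact from Green and Macdonald, so there is no argument to compare against. That said, your approach is the natural one and is correct. You have rightly located the only substantive input: showing $\ch(\One_k)=e_k(Y^{(1)})$, i.e.\ that in these conventions the trivial character of $\GL_k(\FF_q)$ is indexed by the column $(1^k)$ rather than the row $(k)$, which is precisely where the sign $(-1)^{k|\vphi|-1}$ in \eqref{VariableRelation} matters. Once that is secured, multiplicativity of $\ch$ under parabolic induction and the expansion $e_\lambda=\sum_\mu K_{\mu'\lambda}s_\mu$ give the decomposition, and the final assertion follows from $K_{\mu'\lambda}=0$ unless $\mu'\succeq\lambda$. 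Your treatment of the crux is sound in spirit but asserted rather than proved: the identity $\sum_{|\mu|=k}\widetilde{P}_\mu(X;q)=e_k(Y^{(1)})$ is a genuine computation across the alphabets $X^{(f)}$ for varying $|f|$. It does reduce to the single-alphabet fact $\sum_{\lambda\vdash k}\widetilde{P}_\lambda(X;q)=h_k(X)$ together with a generating-function comparison: taking logarithms of $\prod_{f,i}(1-z^{|f|}X^{(f)}_i)^{-1}$ and $\prod_i(1+zY^{(1)}_i)$ and substituting \eqref{VariableRelation} shows they agree, and the $(-1)^{k-1}$ is exactly what trades $h$ for $e$ as you claim. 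Spelling out that matching (or simply citing Macdonald IV.6 for the indexing of the trivial unipotent character) would close the argument.
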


\subsection{Set partition combinatorics}

In this section we introduce some background and terminology regarding set partitions. A \emph{set partition} $\eta$ of $\{1,2,\hdots , n\}$ is a subset
$$\eta\subseteq \{i \frown j\mid 1\leq i<j\leq n\}$$
such that if $i\frown k,j\frown l\in \eta$, then $i=j$ if and only if $k=l$.   Let
\begin{equation}
\cS_n=\{\text{set partitions of $\{1,2,\ldots, n\}$}\}.
\end{equation}

We can represent these set partitions by an arc diagram where we line up $n$ nodes and connect the $i$th to the $j$th if $i\frown j\in \eta$.  For example,
\[
\begin{tikzpicture}[scale=.5,baseline={([yshift=-.5ex]current bounding box.center)}]
\foreach \x in {1,...,8}
	{
	\node (\x) at (\x,0) [inner sep=-1pt] {$\bullet$};
	\node at (\x,-.4) {$\scs\x$};}
	 \draw (1) to [out=45, in=135]  (4);
	\draw (3) to [out=45, in=135]  (6);
    	\draw (6) to [out=45, in=135]  (8);
\end{tikzpicture}=\{1\frown 4,3\frown 6,6\frown 8\}\in\cS_8,
\]
 but
\[
\begin{tikzpicture}[scale=.5,baseline={([yshift=-.5ex]current bounding box.center)}]
\foreach \x in {1,...,8}
	{
	\node (\x) at (\x,0) [inner sep=-1pt] {$\bullet$};
	\node at (\x,-.4) {$\scs\x$};}
	 \draw (1) to [out=45, in=135]  (4);
	  \draw (1) to [out=45, in=135]  (3);
	\draw (3) to [out=45, in=135]  (6);
    	\draw (6) to [out=45, in=135]  (8);
\end{tikzpicture}=\{1\frown 4,1\frown 3,3\frown 6,6\frown 8\}\notin \cS_8
\]
because of the pair of arcs $1 \frown 3$ and $1 \frown 4$.

We say that a set partition is \emph{nonnesting} if it contains no pair of arcs $i \frown l, j \frown k$ with $i<j<k<l$. In other words, the relative positioning of arcs
\[
\begin{tikzpicture}[scale=.5,baseline={([yshift=-.5ex]current bounding box.center)}]
\foreach \x in {1,...,4}
	\node (\x) at (\x,0) [inner sep=-1pt] {$\bullet$};
\foreach \x/\y in {1/i,2/j,3/k,4/l}
	\node at (\x,-.4) {$\scs\y$};
	 \draw (1) to [out=45, in=135]  (4);
	\draw (2) to [out=45, in=135]  (3);
\end{tikzpicture}
\]
never occurs in the arc diagram of the set partition.  Let
\begin{equation}
\cS_n^{\nn}=\{\eta\in \cS_n\mid \eta\text{ nonnesting}\}.
\end{equation}

\subsection{Unipotent subgroups of $\GL_n(\FF_q)$} \label{SectionUnipotentGroups}

A \emph{unipotent subgroup} $U\subseteq \GL_n(\FF_q)$ is a subgroup that is conjugate to a subgroup of the group of unipotent upper-triangular matrices
$$\UT_n(\FF_q)=\{g\in \GL_n(\FF_q)\mid (g-\Id)_{ij}\neq 0\text{ implies } i<j\}.$$
Since every $\mathrm{char}(\FF_q)$-group is isomorphic to a unipotent subgroup of $\GL_n(\FF_q)$ for some $n$, these subgroups can get quite messy.  We will therefore focus on the set of \emph{normal pattern subgroups}
$$\cU_n=\{U\trianglelefteq \UT_n(\FF_q)\mid  T_n\subseteq \mathbf{N}_{\GL_n(\FF_q)} (U)\},$$
 where $T_n\subseteq \GL_n(\FF_q)$ is the subgroup of diagonal matrices.   Note that $\UT_n(\FF_q)\in \cU_n$.
 
In general the elements of $\cU_n$ directly correspond to Dyck paths from $(0,0)$ to $(0,2n)$.  If $D$ is such a Dyck path then
$$U_D=\{u\in \UT_n(\FF_q)\mid (u-\Id)_{ij}\neq 0 \text{ implies $(2i-1,2j-1)$ is above $D$}\}.$$
For example,
$$U_{
\begin{tikzpicture}[scale=.2,baseline=0cm]
\foreach \x in {0,...,10} 
	{\node (\x) at (\x,0)  [inner sep=-1pt] {$\scs\bullet$};}
\foreach \x/\y/\z in {0/0/1,1/1/0,2/0/1,3/1/2,4/2/3,5/3/2,6/2/3,7/3/2,8/2/1,9/1/0}
	\draw (\x,\y)  -- (\x+1,\z);
\end{tikzpicture}} = \left[
\begin{tikzpicture}[scale=.3,baseline=1.4cm]
\foreach \x in {0,2,4,6,8,10} 
	{\node (\x) at (\x,10-\x)  [inner sep=-1pt] {$\scs\bullet$};}
	\foreach \x in {1,3,5,7,9} 
	{\node (\x) at (\x,10-\x)  [inner sep=-1pt] {$1$};}
	\foreach \x/\y in {3/9,5/9,7/9,9/9,9/7}
		\node at (\x,\y) {$\ast$};
	\foreach \x/\y in {5/7,7/7,7/5,9/5,9/3}
		\node at (\x,\y) {$0$};
\foreach \x/\y/\z/\w in {0/10/2/10,2/10/2/8,2/8/4/8,4/8/6/8,6/8/8/8,8/8/8/6,8/6/10/6,10/6/10/4,10/4/10/2,10/2/10/0}
	\draw (\x,\y)  -- (\z,\w);
\end{tikzpicture}
\right].
$$
The following special cases will be of particular importance.  
 \begin{description}
 \item[Integer compositions.]  For a composition $\alpha=(\alpha_1,\ldots, \alpha_\ell)$ of $n$, let
 $$U_\alpha =  \left[\begin{array}{c|c|c|c} \Id_{\alpha_1} & \ast & \cdots & \ast\\ \hline 0 & \Id_{\alpha_2} & \ddots & \vdots \\ \hline \vdots & \ddots & \ddots  & \ast\\ \hline 0 & \cdots & 0 & \Id_{\alpha_\ell} \end{array}\right]\subseteq \UT_n(\FF_q).$$
 Note that $U_\alpha$ is the unipotent radical of the parabolic subgroup $P_\alpha$.
 \item[Set partitions.]  For a non-nesting set partition $\eta\in \cS_n^\nn$, let
 $$U_\eta= \{u \in \UT_n(\FF_q) \mid u_{jk}=0 \text{ if  $i\leq j< k\leq l$ with $i\larc{}l\in \eta-\{j\larc{}k\}$}\}.$$
 \end{description}
We will be particularly interested in the case where $U_\eta\subseteq U_{\bl(\eta)'}$.  

\subsection{Supercharacter theories}

The notion of a supercharacter theory for a finite group $G$ was introduced by Diaconis--Isaacs in \cite{MR2373317}.  The basic idea is to treat linear combinations of irreducible characters as the ``irreducible characters" of the theory, and have a corresponding partition of $G$ whose blocks (called superclasses) are unions of conjugacy classes.  From a slightly different point of view, this gives us a Schur ring \cite{MR2989654}, and we will define a supercharacter theory from that point of view.

A \emph{supercharacter theory} $\scf(G)$ of a finite group $G$ is a subspace $\scf(G)$ of the $\CC$-space of class functions $\cf(G)$ such that $\scf(G)$ is a subalgebra of $\cf(G)$ with respect to the ring structures
\begin{enumerate}
\item[(R1)]  $(\chi\odot \psi)(g)=\chi(g)\psi(g)$, for $\chi,\psi\in \cf(G)$, $g\in G$; and
\item[(R2)]  $\dd (\chi\circ \psi)(g)=\sum_{h\in G} \chi(h)\psi(h^{-1}g)$, for $\chi,\psi\in \cf(G)$, $g\in G$.
\end{enumerate}

Each ring structure gives rise to a $\CC$-basis of orthogonal idempotents, one consisting of orthogonal characters (with respect to (R2)) and the other consists of set identifier functions that identify the superclasses (with respect to (R1)).   

\vspace{.5cm}

\noindent\textbf{Key Examples.}  
\begin{enumerate}
\item[(E1)]  The example that originally motivated the study of supercharacter theories was a supercharacter theory $\scf(\UT_n(\FF_q))$ developed by Andr\'e \cite{MR1338979}.  The superclasses and supercharacters are obtained by letting $\UT_n(\FF_q)$ act by left and right multiplication on the $\FF_q$-space $\UT_n(\FF_q)-\Id$ and its dual space.  The dimension of the resulting theory is a $(q-1)$-analogue of $|\cS_n|$.  Our original motivation for this paper was the observation that generalized Gelfand--Graev characters ``factor through" characters in $\scf(\UT_n(\FF_q))$.  However, it soon became clear that there was a better choice of supercharacter theory.
\item[(E2)]  By slightly coarsening the theory in (E1), we obtain a supercharacter theory $\scf_{nn}(\UT_n(\FF_q))$ \cite{andrews3} with dimension equal to a $(q-1)$-analogue to $|\cS_n^\nn|$.  The advantage of this theory is that the generalized Gelfand--Graev characters can be constructed by directly inducing supercharacters from this theory.
\end{enumerate}

\section{Generalized Gelfand--Graev representation construction and variations}\label{SectionGGGDefinition}

This section gives the definition and construction of the generalized Gelfand--Graev representations, and then explores some variations that give the representations by less direct means.

\subsection{A combinatorial version of Kawanaka's construction}

The generalized Gelfand--Graev representations were introduced by Kawanaka \cite{MR803335} as a source for cuspidal representations of finite groups of Lie type.  In the case of $\GL_n(\FF_q)$, the GGG characters form a basis for the space of class functions of unipotent support
$$\uscf(\GL)\cong \Lambda(X^{(1)}) \quad \text{(in the notation of Section \ref{GLnCombinatorics}).}$$
It follows that the GGG representations are indexed by integer partitions; Kawanaka constructs them from the nilpotent $\GL_n(\FF_q)$-orbits of the corresponding Lie algebra $\fkgl_n(\FF_q)$.  In this section we present a different construction that is more combinatorial in nature.

Given an integer partition $\lambda\vdash n$, we construct a unipotent subgroup $U_{\ctr(\lambda')}\subseteq \UT_n(\FF_q)$ and a linear representation $\gamma_\lambda:U_{\ctr(\lambda')}\rightarrow \GL_1(\CC)$ such that the generalized Gelfand--Graev representation $\Gamma_\lambda$ is given by
$$\Gamma_\lambda=\Ind_{U_{\ctr(\lambda')}}^{\UT_n(\FF_q)}(\gamma_\lambda).$$

\subsubsection{The unipotent subgroup $U_{\ctr(\lambda')}$}

Fix an integer partition $\lambda=(\lambda_1,\ldots, \lambda_\ell)$.   We define a permutation $\ctr\in S_{\lambda_1}$ by
$$\ctr(j)=\left\{\begin{array}{ll} \lfloor \lambda_1/2+1\rfloor+\frac{j-1}{2} &\text{if $j\notin 2\ZZ$,}\\ 
  \lceil \lambda_1/2+1\rceil -j/2 &\text{if $j\in 2\ZZ$.}\end{array}\right.$$
That is, $\ctr$ pushes all of the odd elements to the end and the even ones to the beginning; the odd elements stay in the same relative order and the even ones get placed in reverse order.   We will use $\ctr$ to permute the parts of $\lambda'$ (or equivalently the columns of the Ferrer's diagram of $\lambda$).

We are interested in three Ferrer's shapes corresponding to this partition:
\begin{enumerate}
\item[(F0)]  the usual left-justified Ferrer's shape,
\item[(F1)]  the shape obtained by centering the rows of (F0),
\item[(F2)]  the shape obtained by applying $\ctr$ to the columns of (F0) to get nearly centered rows without offsets.
\end{enumerate}
For example, if $\lambda=(4,3,2,2,1)$, then we write
$$
(\text{F0})=\begin{tikzpicture}[scale=.3,baseline=.5cm]
\foreach \x/\y in {0/0,0/1,1/1,0/2,1/2,0/3,1/3,2/3,0/4,1/4,2/4,3/4}
{
  \draw (\x,\y) +(-0.5,-0.5) rectangle ++(0.5,0.5);
}
\end{tikzpicture},\qquad
(\text{F1})=\begin{tikzpicture}[scale=.3,baseline=.5cm]
\foreach \x/\y in {1.5/0,1/1,2/1,1/2,2/2,.5/3,1.5/3,2.5/3,0/4,1/4,2/4,3/4}
{
  \draw (\x,\y) +(-0.5,-0.5) rectangle ++(0.5,0.5);
}
\end{tikzpicture},\qquad \text{and}\qquad (\text{F2})=\begin{tikzpicture}[scale=.3,baseline=.5cm]
\foreach \x/\y in {2/0,1/1,2/1,1/2,2/2,1/3,2/3,3/3,0/4,1/4,2/4,3/4}
{
  \draw (\x,\y) +(-0.5,-0.5) rectangle ++(0.5,0.5);
}
\end{tikzpicture}.
$$
We will need (F1) to define $\gamma_\lambda$; the composition $\ctr(\lambda')$ determined by the columns of (F2) gives us the subgroup $U_{\ctr(\lambda')}$ (as in Section~\ref{SectionUnipotentGroups}).

In our example,
$$U_{\ctr(\lambda')}=U_{(1,4,5,2)}=\left\{\left(\begin{array}{c|c|c|c}
\Id_1 & \ast & \ast & \ast\\ \hline
0 & \Id_4 & \ast & \ast\\ \hline
0 & 0 & \Id_5 & \ast \\ \hline
0 & 0 & 0 & \Id_2\end{array}\right)\right\}\subseteq \UT_{12}(\FF_q).$$

\begin{remark} The group $U_{\ctr(\lambda')}$ is the group $U_{1.5}$ in Kawanaka \cite{MR803335}.  There are some choices to be made in the construction of $U_{1.5}$, and our choice of column permutation to get (F2) makes these choices.
\end{remark}

\subsubsection{$\Gamma_\lambda$ from the linear representation $\gamma_\lambda$}

Consider the column reading tableau on (F1) obtained by numbering in order down consecutive half-columns.   Let $C_\lambda$ be the corresponding tableau of shape (F2) by viewing (F2) as a row shift from (F1).  In our example,
$$
\begin{tikzpicture}[scale=.4,baseline=.6cm]
\foreach \x/\y/\num in  {1.5/0/7,1/1/5,2/1/10,1/2/4,2/2/9,.5/3/2,1.5/3/6,2.5/3/11,0/4/1,1/4/3,2/4/8,3/4/12}
{
  \draw (\x,\y) +(-0.5,-0.5) rectangle ++(0.5,0.5);
  \draw (\x,\y) node {$\scscs\num$};
}
\end{tikzpicture}\qquad \text{becomes}\qquad
C_\lambda=
\begin{tikzpicture}[scale=.4,baseline=.6cm]
\foreach \x/\y/\num in  {2/0/7,1/1/5,2/1/10,1/2/4,2/2/9,1/3/2,2/3/6,3/3/11,0/4/1,1/4/3,2/4/8,3/4/12}
{
  \draw (\x,\y) +(-0.5,-0.5) rectangle ++(0.5,0.5);
  \draw (\x,\y) node {$\scscs\num$};
}
\end{tikzpicture}.
$$

Given $\lambda\vdash n$, we obtain a set-partition
$$\GGG(\lambda)=\{i\larc{}j\mid \text{$i<j$ are in the same row and consecutive columns of $C_\lambda$}\},$$
whose block sizes are the parts of $\lambda$.   In our running example,
$$\GGG(\lambda)=
\begin{tikzpicture}[scale=.5,baseline={([yshift=-.5ex]current bounding box.center)}]
\foreach \x in {1,...,12}
	{
	\node (\x) at (\x,0) [inner sep=-1pt] {$\bullet$};
	\node at (\x,-.4) {$\scs\x$};}
\foreach \x/\y in {1/3,3/8,8/12,2/6,6/11,4/9,5/10}
	 \draw (\x) to [out=45, in=135]  (\y); 
\end{tikzpicture}.$$

Fix a nontrivial homomorphism $\vartheta:\FF_q^+\rightarrow \GL_1(\CC)$.  Define
$$\begin{array}{r@{\ }c@{\ }c@{\ }c}
\gamma_\lambda: & U_{\ctr(\lambda')} & \longrightarrow & \GL_1(\CC)\\
& u & \mapsto & \dd\prod_{i\slarc{}j\in \GGG(\lambda)} \vartheta(u_{ij})
\end{array}.$$

Note that the commutator subgroup of $ U_{\ctr(\lambda')} $ is given by
$$[ U_{\ctr(\lambda')} , U_{\ctr(\lambda')} ]=\left\{u\in \UT_n(\FF_q)\;\left|\;\begin{array}{c} (u-\Id_n)_{ij}\neq 0\text{ implies $i$ is at least} \\ \text{two columns left of $j$ in $C_\lambda$}\end{array}\right.\right\}.$$
Since $[ U_{\ctr(\lambda')} , U_{\ctr(\lambda')} ]\subseteq \ker(\gamma_\lambda)$,  the function $\gamma_\lambda$ is a representation.

 Define the \emph{generalized Gelfand--Graev} representation  (or GGG representation) corresponding to the integer partition $\lambda\vdash n$ to be the induced representation
 $$\Gamma_\lambda= \Ind_{U_{\ctr(\lambda')} }^{\GL_n(\FF_q)} (\gamma_\lambda).$$

 \begin{remark}
 There are a number of choices made in this construction, but none of them matter once we induce to $\GL_n(\FF_q)$.  Section~\ref{sectioncharacterization} below explores more thoroughly how much flexibility we have in choosing a subgroup to replace $U_{\ctr(\lambda')}$ and a linear representation to replace $\gamma_\lambda$.
 \end{remark}

\subsection{A characterization of generalized Gelfand-Graev characters}\label{sectioncharacterization}

This section seeks to understand better the set of pairs
$$\{(\gamma,U)\mid U\subseteq \UT_n(\FF_q), \gamma:U\rightarrow \GL_1(\CC), \Ind_{U}^{\UT_n(\FF_q)}(\gamma)\in \QQ_{>0}\cdot \Gamma_\lambda\},$$
where $\lambda$ is a partition of $n$. While we do not completely characterize this set, we do give a strong set of sufficient conditions for inclusion that includes many of the cases we are interested in.  We begin by presenting an elementary proof for a known characterization of the generalized Gelfand--Graev characters.

As in Section \ref{SectionUnipotentGroups}, let
$$\cU_n=\{U\triangleleft \UT_n(\FF_q)\mid  T_n\subseteq \mathbf{N}_{\GL_n(\FF_q)} (U)\}.$$

For a set partition $\eta\in \cS_n$ and $U\in \cU_n$, define the function
$$\begin{array}{r@{\ }c@{\ }c@{\ }c} \gamma_\eta : & U & \longrightarrow & \GL_1(\CC)\\
& u & \mapsto & \dd\prod_{i\slarc{}j\in \eta} \vartheta(u_{ij}),
\end{array}$$
and $u_\eta\in \UT_n(\FF_q)$ by
$$(u_{\eta}-\Id_n)_{ij}=\left\{\begin{array}{@{}ll} 1 & \text{if $i\larc{}j\in \eta$,} \\ 0 & \text{otherwise.}\end{array}\right.$$

\begin{lemma} \label{GGGUnipotentCharacters}
Let $\eta\in\cS_n^{\nn}$ be a non-nesting partition. Let $U\in \cU_n$ be a subgroup containing $u_\eta$ with $[U,U] \subseteq\ker(\gamma_\eta)$ (so that $\gamma_\eta$ is a representation of $U$). If $\mu\vdash n$, then
$$\langle \Ind_{P_{\mu}}^{\GL_n(\FF_q)} (\One),  \Ind_{U}^{\GL_n(\FF_q)}(\gamma_\eta)\rangle \neq 0
\qquad\text{if and only if}\qquad \mu'\succeq \bl(\eta).$$
\end{lemma}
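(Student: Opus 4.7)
The plan is to apply Frobenius reciprocity, reducing the inner product to an orbit count on $G/P_\mu$, and then to invoke the Gale--Ryser theorem once the orbit count has been rewritten as a combinatorial condition involving colorings of $\{1,\ldots,n\}$.

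By Frobenius reciprocity,
\[
\langle \Ind_{P_\mu}^{\GL_n(\FF_q)}(\One),\, \Ind_U^{\GL_n(\FF_q)}(\gamma_\eta)\rangle \;=\; \dim(\CC[G/P_\mu])^{U,\gamma_\eta},
\]
which counts $U$-orbits on $G/P_\mu$ whose $U$-stabilizer is contained in $\ker\gamma_\eta$. Because $U\in\cU_n$, the group $\UT_n$ normalizes $U$, so the Bruhat decomposition supplies orbit representatives $uwP_\mu$ with $u\in\UT_n$ and $w\in W/W_\mu$, and the $U$-stabilizer of such a coset equals $uN_wu^{-1}$, where $N_w := U\cap wP_\mu w^{-1}$. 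Writing $\pi_\mu:\{1,\ldots,n\}\to\{1,\ldots,\ell\}$ for the function sending each index to its block in $\mu$, the subgroup $N_w$ consists of the $v\in U$ with $v_{ij}=0$ whenever $\pi_\mu(w^{-1}(i))>\pi_\mu(w^{-1}(j))$. Since $u_\eta\in U$ and $U$ is $T_n$-stable, suitable commutators of $T_n$-conjugates of $u_\eta$ show that $U$ contains $\{\Id+cE_{ij}\mid c\in\FF_q\}$ for every pair $(i,j)$ lying in a common block of $\eta$; hence $N_w\subseteq\ker\gamma_\eta$ if and only if every arc $i\frown j\in\eta$ satisfies the \emph{arc condition} $\pi_\mu(w^{-1}(i))>\pi_\mu(w^{-1}(j))$.

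Setting $\tau := \pi_\mu\circ w^{-1}$, the arc condition becomes: $\tau:\{1,\ldots,n\}\to\{1,\ldots,\ell\}$ is a coloring with $|\tau^{-1}(c)|=\mu_c$ that strictly decreases along each arc of $\eta$. Because $\eta$ is non-nesting and each vertex of a set partition has at most one incoming and one outgoing arc, each block $\{b_1<\cdots<b_k\}$ of $\eta$ must carry the consecutive arc chain $b_1\frown b_2\frown\cdots\frown b_k$, so the condition reduces to requiring $\tau$ to be injective and strictly decreasing on each block. Encoding ``which colors appear in which block'' as a $0/1$ matrix with row sums $\mu$ and column sums $\bl(\eta)$, the Gale--Ryser theorem asserts that such a matrix exists if and only if $\mu'\succeq\bl(\eta)$.

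The ``if'' direction is then immediate: Gale--Ryser supplies a $w$ with $N_w\subseteq\ker\gamma_\eta$, so the $U$-orbit of $wP_\mu$ makes the count positive. For the ``only if'' direction, suppose the inner product is nonzero, so some $uN_wu^{-1}\subseteq\ker\gamma_\eta$. The hypothesis $[U,U]\subseteq\ker\gamma_\eta$ makes $\gamma_\eta$ invariant under $U$-conjugation, which immediately reduces the case $u\in U$ to $N_w\subseteq\ker\gamma_\eta$. The principal obstacle is the case $u\in\UT_n\setminus U$, where conjugation can spread arc-position support via
\[
uE_{ij}u^{-1} \;=\; \sum_{k\le i,\,l\ge j} u_{ki}(u^{-1})_{jl}\,E_{kl},
\]
and one must rule out cancellations across arcs of $\eta$ that might permit $uN_wu^{-1}\subseteq\ker\gamma_\eta$ while the arc condition fails on $w$. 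The non-nesting structure of $\eta$, together with the block-diagonal arc support generated inside $U$ by $T_n$-conjugates of $u_\eta$, forces the arc condition back onto $w$ itself; Gale--Ryser then delivers $\mu'\succeq\bl(\eta)$.
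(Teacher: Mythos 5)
Your setup—Frobenius reciprocity/Mackey to reduce to an orbit count, Bruhat decomposition to parametrize by $uwP_\mu$ with $u\in\UT_n$, identification of the stabilizer as a conjugate of $N_w=U\cap wP_\mu w^{-1}$, and translation of $N_w\subseteq\ker\gamma_\eta$ into the ``arc condition'' on $\tau=\pi_\mu\circ w^{-1}$—matches the paper's approach. Your use of Gale--Ryser to show existence of a suitable $w$ when $\mu'\succeq\bl(\eta)$ is a clean alternative to the paper's row-reading-tableau argument, and it is correct (and note: the fact that each block $\{b_1<\cdots<b_k\}$ of a set partition carries exactly the consecutive arcs $b_1\frown b_2\frown\cdots\frown b_k$ is true for all set partitions, not specifically non-nesting ones; each vertex having at most one left-arc and one right-arc already forces it).

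However, the ``only if'' direction has a genuine gap, and it is precisely the step where the non-nesting hypothesis does its real work. You correctly isolate the obstacle: when $u\in\UT_n\setminus U$, one must show that $uN_wu^{-1}\subseteq\ker\gamma_\eta$ still forces the arc condition on $w$. You write down the right conjugation formula but then assert, without argument, that ``the non-nesting structure\ldots forces the arc condition back onto $w$ itself.'' The missing content is the following computation. Suppose some arc $j\frown k\in\eta$ violates the arc condition, so that $1+ae_{jk}\in w^{-1}P_\mu w\cap U$ for all $a$. For $u\in\UT_n$,
\[
\gamma_\eta\bigl(1+a\,u e_{jk} u^{-1}\bigr)
=\vartheta\Bigl(a\sum_{\substack{i\le j<k\le l\\ i\frown l\in\eta}} u_{ij}(u^{-1})_{kl}\Bigr),
\]
and non-nesting forces the index set to be exactly $\{(i,l)=(j,k)\}$: $i<j$ and $k<l$ would nest $j\frown k$ inside $i\frown l$; $i=j$, $k<l$ would give two arcs out of $j$; $i<j$, $l=k$ would give two arcs into $k$. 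Hence the sum collapses to $u_{jj}(u^{-1})_{kk}=1$ and $\gamma_\eta(1+a\,ue_{jk}u^{-1})=\vartheta(a)\ne 1$ for suitable $a$, so $uN_wu^{-1}\not\subseteq\ker\gamma_\eta$ for every $u$. Without this computation your proof does not actually rule out the cancellations you worry about, so the reverse implication is unproved as written.
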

\begin{proof}
Note that
\begin{align*}
\langle \Ind_{P_{\mu}}^{\GL_n(\FF_q)} (\One),  \Ind_{U}^{\GL_n(\FF_q)}(\gamma_\eta)\rangle & = \dim\Big(e_{P_\mu} \CC \GL_n(\FF_q) e_{\gamma_\eta}\Big)\\
&=\#\{P_\mu g U\in P_\mu\backslash \GL_n(\FF_q)/U\mid (g^{-1}P_\mu g)\cap U \subseteq \ker(\gamma_\eta)\}.
\end{align*}
It follows from the Bruhat decomposition of $\GL_n(\FF_q)$ that any set of representatives for the cosets $S_\mu\backslash S_n$ gives a set of double coset representatives for $ P_\mu\backslash \GL_n(\FF_q)/\UT_n(\FF_q)$.    In particular,
$$\langle \Ind_{P_{\mu}}^{\GL_n(\FF_q)} (\One),  \Ind_{U}^{\GL_n(\FF_q)}(\gamma_\eta)\rangle \geq \#\{P_\mu w U\mid  S_\mu w\in S_\mu\backslash S_n,(w^{-1}P_\mu w)\cap U \subseteq \ker(\gamma_\eta)\}.$$
Let $R_\mu$ be the row-reading Young tableau of shape $\mu$.
Note that  $(w^{-1}P_\mu w)\cap U \subseteq \ker(\gamma_\eta)$ if and only if  $i<j$ in the same connected component of $\eta$ implies $w(j)$ is in a row strictly to the North of $w(i)$ in $R_\mu$.   Thus, if $\mu'\succeq \bl(\eta)$, then there exists $w\in S_n$ that satisfies $(w^{-1}P_\mu w)\cap U \subseteq \ker(\gamma_\eta)$, so
$$\langle \Ind_{P_{\mu}}^{\GL_n(\FF_q)} (\One),  \Ind_{U}^{\GL_n(\FF_q)}(\gamma_\eta)\rangle>0.$$

Conversely, suppose $\mu'\nsucceq \bl(\eta)$.  Then for any $w\in S_n$, there exists $j\larc{} k\in \eta$ such that $1+e_{w(j),w(k)}\in P_\mu$.  Thus, $1+ae_{jk}\in w^{-1}P_\mu w\cap U$ for all $a\in \FF_q^\times$, and in particular $w^{-1}P_\mu w\cap U\nsubseteq \ker(\gamma_\eta)$.  Let $u,v\in \UT_n(\FF_q)$.  Then
\begin{equation*}
(1+aue_{jk}v)= 1+\sum_{i\leq j<k\leq l} a u_{ij}v_{kl}e_{il},
\end{equation*}
where in our case we will take $v=u^{-1}$.  Since $\eta$ non-nesting,
$$\gamma_\eta(1+aue_{jk}v)=\vartheta\bigg(a\sum_{i\leq j<k\leq l\atop i\slarc{}l\in \eta} u_{ij}(u^{-1})_{kl}\bigg)=\vartheta\Big(au_{jj}(u^{-1})_{kk}\Big)=\vartheta(a)\neq 1,$$
for some $a\in \FF_q^\times$.  Thus, for $u\in \UT_n(\FF_q)$ and $w\in S_n$, we have $u^{-1}w^{-1}P_\mu wu\cap U\nsubseteq \ker(\gamma_\eta)$, so
$$\{P_\mu g U\in P_\mu\backslash \GL_n(\FF_q)/U\mid (g^{-1}P_\mu g)\cap U \subseteq \ker(\gamma_\eta)\}=\emptyset,$$
as desired.
\end{proof}

By combining Lemma~\ref{GGGUnipotentCharacters} with Lemma \ref{PermutationCharacterDecomposition}, we obtain the following corollary.

\begin{corollary}\label{corGGG}
Let $\eta\in\cS_n^{\nn}$ be a non-nesting partition, and let $U\in \cU_n$ with $u_\eta\in U$ and $[U,U] \subseteq \ker(\gamma_\eta)$. If $\mu\vdash n$, then
\[
		\langle \Ind_{U}^{\GL_n(\FF_q)}(\gamma_\eta), \chi^{\mu^{(1)}} \rangle = 0
\]
unless $\mu \succeq \bl(\eta)$, and
\[
		\langle \Ind_{U}^{\GL_n(\FF_q)}(\gamma_\eta), \chi^{\bl(\eta)^{(1)}} \rangle \neq 0.
\]
\end{corollary}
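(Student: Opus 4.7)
The plan is to combine the two lemmas via the unitriangularity of the Kostka matrix. By Lemma~\ref{PermutationCharacterDecomposition}, setting $\lambda = \mu'$ gives
\[
        \Ind_{P_{\mu'}}^{\GL_n(\FF_q)}(\One) = \sum_{\nu : \nu' \succeq \mu'} K_{\nu'\mu'}\chi^{\nu^{(1)}} = \sum_{\nu \preceq \mu} K_{\nu'\mu'}\chi^{\nu^{(1)}},
\]
with leading coefficient $K_{\mu'\mu'}=1$. Taking the inner product of both sides against $\Ind_U^{\GL_n(\FF_q)}(\gamma_\eta)$ and invoking Lemma~\ref{GGGUnipotentCharacters}, the left-hand side is nonzero precisely when $\mu \succeq \bl(\eta)$.

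For the first claim, I would suppose toward a contradiction that there exists $\mu$ with $\mu \not\succeq \bl(\eta)$ and $\langle \Ind_U^{\GL_n(\FF_q)}(\gamma_\eta), \chi^{\mu^{(1)}}\rangle \neq 0$, and choose such a $\mu$ that is minimal in dominance order. For any $\nu \prec \mu$ with a nonzero contribution, minimality forces $\nu \succeq \bl(\eta)$; but then $\mu \succ \nu \succeq \bl(\eta)$ contradicts $\mu \not\succeq \bl(\eta)$. Hence every lower term in the sum above vanishes, leaving
\[
        \langle \Ind_{P_{\mu'}}^{\GL_n(\FF_q)}(\One), \Ind_U^{\GL_n(\FF_q)}(\gamma_\eta)\rangle = \langle \chi^{\mu^{(1)}}, \Ind_U^{\GL_n(\FF_q)}(\gamma_\eta)\rangle \neq 0,
\]
contradicting Lemma~\ref{GGGUnipotentCharacters} since $\mu \not\succeq \bl(\eta)$.

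For the second claim, I would specialize the decomposition to $\mu = \bl(\eta)$. The first claim forces every term $\chi^{\nu^{(1)}}$ with $\nu \prec \bl(\eta)$ to contribute $0$, so the inner product collapses to
\[
        \langle \Ind_{P_{\bl(\eta)'}}^{\GL_n(\FF_q)}(\One), \Ind_U^{\GL_n(\FF_q)}(\gamma_\eta)\rangle = \langle \chi^{\bl(\eta)^{(1)}}, \Ind_U^{\GL_n(\FF_q)}(\gamma_\eta)\rangle.
\]
The left-hand side is nonzero by Lemma~\ref{GGGUnipotentCharacters}, yielding the desired non-vanishing. The only subtle point is bookkeeping the conjugation between dominance conditions on $\mu$ versus $\mu'$ in Lemma~\ref{PermutationCharacterDecomposition}; once that is in place the argument is a short induction, with Lemma~\ref{GGGUnipotentCharacters} doing all the representation-theoretic work.
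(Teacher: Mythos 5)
Your argument is correct and is exactly the intended combination: the paper's proof of this corollary consists of the single sentence ``By combining Lemma~\ref{GGGUnipotentCharacters} with Lemma~\ref{PermutationCharacterDecomposition}, we obtain the following corollary,'' and your minimal-counterexample argument via the unitriangularity of the Kostka matrix $K_{\nu'\mu'}$ (with $K_{\mu'\mu'}=1$ and $K_{\nu'\mu'}=0$ unless $\nu\preceq\mu$) is the natural way to make that combination explicit. The conjugation bookkeeping checks out: you correctly use that $\nu'\succeq\mu'$ is equivalent to $\nu\preceq\mu$, and that Lemma~\ref{GGGUnipotentCharacters} applied to $P_{\mu'}$ gives nonvanishing iff $(\mu')'=\mu\succeq\bl(\eta)$.
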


As generalized Gelfand--Graev characters are induced from unipotent subgroups, their support is contained in the set unipotent elements of $\GL_n(\FF_q)$; in fact, they form a basis for the space of unipotently supported class functions of $\GL_n(\FF_q)$.

\begin{proposition}\label{GGGUnipotentCharactersCor} For $\lambda\vdash n$,
\[
		\langle \Gamma_\lambda, \chi^{\mu^{(1)}} \rangle = 0
\]
unless $\mu \succeq \lambda$, and
\[
		\langle \Gamma_\lambda, \chi^{\lambda^{(1)}} \rangle =1.
\]
In particular, the set $\{\Gamma_\lambda \mid \lambda \vdash n\}$ is a basis for the space of unipotently supported class functions of $\GL_n(\FF_q)$.
\end{proposition}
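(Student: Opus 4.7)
The plan is to apply Corollary~\ref{corGGG} to the data $\eta = \GGG(\lambda)$, $U = U_{\ctr(\lambda')}$, $\gamma_\eta = \gamma_\lambda$, and then to sharpen the resulting nonvanishing statement to equality with $1$ via a second pairing. I would first verify the three hypotheses. The set partition $\GGG(\lambda)$ is nonnesting because its arcs connect entries within a single row of $C_\lambda$ lying in consecutive integer columns; the matrix $u_{\GGG(\lambda)}$ lies in $U_{\ctr(\lambda')}$ because these arcs connect indices in distinct columns of $C_\lambda$, which correspond to distinct parts of $\ctr(\lambda')$; and the inclusion $[U_{\ctr(\lambda')}, U_{\ctr(\lambda')}] \subseteq \ker(\gamma_\lambda)$ is precisely the commutator description recorded just before the definition of $\Gamma_\lambda$. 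Since the connected components of $\GGG(\lambda)$ are exactly the rows of $C_\lambda$, we also have $\bl(\GGG(\lambda)) = \lambda$, so Corollary~\ref{corGGG} yields both the vanishing $\langle \Gamma_\lambda, \chi^{\mu^{(1)}}\rangle = 0$ for $\mu \not\succeq \lambda$ and the nonvanishing $\langle \Gamma_\lambda, \chi^{\lambda^{(1)}}\rangle \neq 0$.

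To upgrade the nonvanishing to $\langle \Gamma_\lambda, \chi^{\lambda^{(1)}} \rangle = 1$, I would pair $\Gamma_\lambda$ with the permutation character $\Ind_{P_{\lambda'}}^{\GL_n(\FF_q)}(\One)$. By Lemma~\ref{PermutationCharacterDecomposition} this character equals $\sum_\mu K_{\mu' \lambda'}\,\chi^{\mu^{(1)}}$ with $K_{\mu'\lambda'} = 0$ unless $\mu' \succeq \lambda'$, equivalently $\mu \preceq \lambda$. Combined with the vanishing from the previous paragraph this forces $\mu = \lambda$ in the expansion
\[
\langle \Ind_{P_{\lambda'}}^{\GL_n(\FF_q)}(\One),\, \Gamma_\lambda \rangle = \sum_{\mu \vdash n} K_{\mu'\lambda'}\, \langle \Gamma_\lambda, \chi^{\mu^{(1)}}\rangle,
\]
and $K_{\lambda'\lambda'} = 1$ reduces the pairing to $\langle \Gamma_\lambda, \chi^{\lambda^{(1)}}\rangle$ itself. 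It therefore suffices to evaluate the left-hand side, via the Mackey formula, as the double-coset count
\[
\#\bigl\{P_{\lambda'} g U_{\ctr(\lambda')} : g^{-1} P_{\lambda'} g \cap U_{\ctr(\lambda')} \subseteq \ker \gamma_\lambda \bigr\}
\]
and to show it equals $1$. The argument at the end of the proof of Lemma~\ref{GGGUnipotentCharacters} rules out every Bruhat stratum $P_{\lambda'} w \UT_n(\FF_q)$ except the one indexed by the unique $w \in S_{\lambda'}\backslash S_n$ that sends each connected component of $\GGG(\lambda)$ to a set of distinct rows of $R_{\lambda'}$ in index-reversing fashion; uniqueness of this ``good'' $w$ comes from the fact that there is a unique $0/1$-matrix with row sums $\lambda'$ and column sums $\lambda$, namely the Young diagram of $\lambda$ itself.

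The basis statement follows by formal considerations: the matrix $\bigl(\langle \Gamma_\lambda, \chi^{\mu^{(1)}}\rangle\bigr)_{\lambda, \mu}$ is unitriangular with respect to any linear extension of the dominance order on partitions of $n$, so the $\{\Gamma_\lambda\}$ are linearly independent; each is induced from a linear character of a unipotent subgroup, hence is supported on the set of unipotent elements of $\GL_n(\FF_q)$; and since the space of unipotently supported class functions has dimension equal to the number of unipotent conjugacy classes, which is $p(n)$, the $p(n)$ linearly independent characters $\{\Gamma_\lambda\}$ must be a basis.

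The main obstacle is the Mackey count in the second paragraph. The refinement from $\UT_n(\FF_q)$ down to the smaller subgroup $U_{\ctr(\lambda')}$ can in principle split the good Bruhat stratum $P_{\lambda'} w \UT_n(\FF_q)$ into many cosets $P_{\lambda'} w u U_{\ctr(\lambda')}$, so the lower bound recorded in the proof of Lemma~\ref{GGGUnipotentCharacters} is not automatically tight. One must use the compatibility between the row shape $\lambda'$ of $P_{\lambda'}$ and the column shape $\ctr(\lambda')$ of $U_{\ctr(\lambda')}$ (both arising from the same Young diagram of $\lambda$ via the centering construction) to verify that the product $(w^{-1} P_{\lambda'} w \cap \UT_n(\FF_q)) \cdot U_{\ctr(\lambda')}$ exhausts $\UT_n(\FF_q)$, so that this stratum contributes exactly one refined double coset satisfying the kernel condition.
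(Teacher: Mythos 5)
Your proposal matches the paper's proof essentially step by step: apply Corollary~\ref{corGGG} with $\eta = \GGG(\lambda)$ to get the vanishing and nonvanishing, deduce linear independence and the basis claim from a dimension count, then pair with $\Ind_{P_{\lambda'}}^{\GL_n(\FF_q)}(\One)$ to pin the leading coefficient down to $1$. The paper leaves the final step as ``an argument analogous to that in the proof of Lemma~\ref{GGGUnipotentCharacters},'' and you have correctly identified why the lower bound there is not automatically tight and what must be verified to close the gap; your Gale--Ryser observation that a $0/1$-matrix with row sums $\lambda'$ and column sums $\lambda$ is unique is a clean way to isolate the unique good Bruhat stratum, and the sufficient condition you propose — that $(w^{-1}P_{\lambda'}w \cap \UT_n(\FF_q)) \cdot U_{\ctr(\lambda')}$ exhausts $\UT_n(\FF_q)$ — is exactly the right thing to check and is what makes the stratum contribute a single refined double coset.
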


\begin{proof} Recall that 
\[
        \Gamma_\lambda = \Ind_{U_{\ctr(\lambda')}}^{\GL_n(\FF_q)}(\gamma_\lambda) = \Ind_{U_{\ctr(\lambda')}}^{\GL_n(\FF_q)}(\gamma_{\GGG(\lambda)}).
\]        
By Corollary~\ref{corGGG}, we have that
\[
		\langle \Gamma_\lambda, \chi^{\mu^{(1)}} \rangle = 0
\]
unless $\mu \succeq \lambda$, and
\[
		\langle \Gamma_\lambda, \chi^{\lambda^{(1)}}\rangle \neq 0.
\]
This implies that the generalized Gelfand--Graev characters are linearly independent; as the unipotent conjugacy classes are also indexed by partitions of $n$, $\{\Gamma_\lambda \mid \lambda \vdash n\}$ is in fact a basis for the space of unipotently supported class functions of $\GL_n(\FF_q)$.

To see that $\langle \Gamma_\lambda, \chi^{\lambda^{(1)}}\rangle =1$, it suffices to show that $\langle \Gamma_\lambda,\Ind_{P_{\lambda'}}^{\GL_n(\FF_q)}(\One)\rangle = 1$. This follows from an argument analogous to that in the proof of Lemma~\ref{GGGUnipotentCharacters}.
\end{proof}

The following lemma allows us to determine which unipotent conjugacy classes are contained in the support of a given GGG character.

\begin{lemma} \label{GGGUnipotentClasses}
 Let $\mu\vdash n$ be a partition, and let $\alpha\vDash n$ be a composition that is a reordering of $\mu'$. If $u \in U_{\alpha}$ has Jordan type $\nu$, then $\nu \preceq \mu$.
\end{lemma}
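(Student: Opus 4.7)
The plan is to realize $\nu$ via a filtration of $\FF_q[x]$-modules and to read off the dominance bound from the Pieri rule for Hall polynomials. Write $u = \Id + N$, so that $N$ is strictly block upper triangular with block sizes $\alpha_1,\dots,\alpha_\ell$, and the Jordan type of $u$ coincides with that of $N$, namely $\nu$. Regard $V = \FF_q^n$ as an $\FF_q[x]$-module with $x$ acting via $N$; the isomorphism class of $V$ is then $\nu$.

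The block decomposition $V = V_1 \oplus \cdots \oplus V_\ell$ (with $\dim V_i = \alpha_i$) produces the filtration $F_j = V_1 \oplus \cdots \oplus V_j$. Since $N(F_j) \subseteq F_{j-1}$, each $F_j$ is an $\FF_q[x]$-submodule, and every successive quotient $F_j/F_{j-1} \cong \FF_q^{\alpha_j}$ is annihilated by $x$, i.e.\ has type $(1^{\alpha_j})$. I then invoke the Pieri rule for Hall polynomials (Macdonald II.1): whenever a module of type $\kappa$ is extended by a trivial module of dimension $m$, the resulting type $\lambda$ satisfies $\kappa \subseteq \lambda$ with $\lambda/\kappa$ a vertical $m$-strip. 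Applied iteratively along the filtration, this presents $\nu$ as a union of vertical strips of sizes $\alpha_1,\dots,\alpha_\ell$ added in order.

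Labeling each cell of $\nu$ by the step at which it is added produces a filling of $\nu$ with content $\alpha$ that is strictly increasing along rows and weakly increasing down columns; transposing yields a semistandard Young tableau of shape $\nu'$ with content $\alpha$. Therefore $K_{\nu',\alpha} > 0$, and since Kostka numbers depend on $\alpha$ only through its underlying multiset, this equals $K_{\nu',\mu'}$. The positivity of $K_{\nu',\mu'}$ is equivalent to $\nu' \succeq \mu'$ in dominance order, i.e.\ to $\nu \preceq \mu$, as desired.

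The main obstacle is the clean invocation of the Pieri step: while standard, it is fundamentally a statement about extensions of $\FF_q[x]$-modules rather than a purely combinatorial symmetric-function identity, so one must be careful that the conventions line up with our trivial-quotient setup. A more elementary alternative is to prove the rank inequalities $\mathrm{rank}(N^k) \leq n - (\alpha_{(1)} + \cdots + \alpha_{(k)})$ directly for all $k$, where $\alpha_{(1)} \geq \alpha_{(2)} \geq \cdots$ denotes $\alpha$ sorted in decreasing order; this bypasses module theory at the cost of a more delicate analysis of the block-triangular structure of $N^k$.
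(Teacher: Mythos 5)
Your proposal is correct, but it takes a genuinely different route from the paper. The paper splits into two cases. When $\alpha = \mu'$ (sorted), it argues directly from the block-upper-triangular structure that $\dim\ker(u-\Id)^i \geq \mu_1' + \cdots + \mu_i'$, which combined with $\dim\ker(u-\Id)^i = \nu_1' + \cdots + \nu_i'$ gives $\nu' \succeq \mu'$. For an arbitrary reordering $\alpha$ of $\mu'$, it sidesteps the ``delicate analysis of the block-triangular structure of $N^k$'' that you flag as the cost of your elementary alternative: it observes that $\Ind_{U_\alpha}^{\GL_n(\FF_q)}(\One) = \rho_{\GL_{\alpha_1}} \circ \cdots \circ \rho_{\GL_{\alpha_\ell}}$ is a \emph{commutative} product of regular characters (parabolic induction), hence independent of the ordering of $\alpha$; since this character's value at $u$ detects whether the conjugacy class of $u$ meets $U_\alpha$, the general case reduces to the sorted case. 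Your proof instead handles all orderings of $\alpha$ uniformly: the module filtration $F_j = V_1 \oplus \cdots \oplus V_j$ with $N F_j \subseteq F_{j-1}$, the Pieri step for extensions with trivial quotient (which, as you note, is the genuine module-theoretic input --- concretely, $xF_j \subseteq F_{j-1} \subseteq F_j$ forces the type of $F_{j-1}$ to be squeezed between $(\kappa^{(j)}_i - 1)_i$ and $\kappa^{(j)}$), and then the translation into an SSYT of shape $\nu'$ and content $\alpha$ giving $K_{\nu'\mu'} > 0$. What the paper's route buys is elementarity (linear algebra plus a known commutativity fact from Green's theory, already used elsewhere in the paper); what your route buys is a uniform, structural argument that never needs the reordering trick and makes the appearance of Kostka positivity, and hence dominance, transparent.
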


\begin{proof} First consider the case where $\alpha = \mu'$. Since $u$ is unipotent,
\[
		\text{dim}(\text{ker}(u-1)^i) = \nu_1'+\nu_2'+...+\nu_i'.
\]
Simultaneously, $u \in U_{\alpha'}$ implies
\[
		\text{dim}(\text{ker}(u-1)^i) \geq \mu_1'+\mu_2'+...+\mu_i'
\]
for all $i$. This means that $\nu' \succeq \mu'$, hence $\nu \preceq \mu$.

Let $\alpha$ be an arbitrary a reordering of $\lambda'$, and note that
\begin{equation*}
		\Ind_{U_{\alpha}}^{\GL_n(\FF_q)}(\One)
		 = \rho_{\GL_{\alpha_1}(\FF_q)} \circ \cdots \circ \rho_{\GL_{\alpha_k}(\FF_q)}=\rho_{\GL_{\mu_1'}(\FF_q)} \circ \cdots\circ \rho_{\GL_{\mu_k'}(\FF_q)}=\Ind_{U_{\mu'}}^{\GL_n(\FF_q)}(\One),
\end{equation*}
where $\rho_G$ denotes the regular character of $G$ and $\circ$ is the commutative product on characters coming from parabolic induction (\ref{ParabolicInduction}).
In particular,
\begin{equation*}
\frac{\#\{u \in U_\alpha\mid u\text{ has Jordan type $\nu$}\}}{|U_\alpha|}=\Ind_{U_{\alpha}}^{\GL_n(\FF_q)}(\One)(u)
=\Ind_{U_{\mu'}}^{\GL_n(\FF_q)}(\One)(u)\\
=0,
\end{equation*}
unless $\nu\preceq \mu$.
\end{proof}

We can now describe the unipotent conjugacy classes on which a GGG character is nonzero.

\begin{proposition}\label{GGGUnipotentClassesCor} Let $\lambda,\mu\in\cP$ with $|\lambda|=|\mu|$; then
\begin{enumerate}
\item[(a)]  If $u \in U$ has Jordan type $\mu$, then
\[
		\Gamma_\lambda(u) = 0
\]
unless $\mu \preceq \lambda$.
\item[(b)] $\Gamma_\lambda(u_\lambda)\neq 0$.
\end{enumerate}
\end{proposition}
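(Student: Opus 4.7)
The plan is to deduce (a) directly from support properties of induced characters together with Lemma~\ref{GGGUnipotentClasses}, and then obtain (b) from (a) by a triangularity argument relying on the linear independence established in Proposition~\ref{GGGUnipotentCharactersCor}.

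For (a), I would start from the standard fact that an induced character $\Ind_H^G(\psi)$ is supported on the union of $G$-conjugates of the set $\{h\in H : \psi(h)\neq 0\}$. Applied to $\Gamma_\lambda=\Ind_{U_{\ctr(\lambda')}}^{\GL_n(\FF_q)}(\gamma_\lambda)$, this means $\Gamma_\lambda$ vanishes on every element of $\GL_n(\FF_q)$ that is not $\GL_n(\FF_q)$-conjugate to some element of $U_{\ctr(\lambda')}$. Since $\ctr(\lambda')$ is by construction a reordering of the composition $\lambda'$, Lemma~\ref{GGGUnipotentClasses} (with $\mu=\lambda$) guarantees that every $u\in U_{\ctr(\lambda')}$ has Jordan type $\nu\preceq\lambda$. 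Hence if $u$ has Jordan type $\mu\not\preceq\lambda$, then $u$ is not conjugate to any element of $U_{\ctr(\lambda')}$, so $\Gamma_\lambda(u)=0$.

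For (b), my approach is a triangularity argument. Let $\{\pi_\nu : \nu\vdash n\}$ denote the basis of indicator functions for the unipotent conjugacy classes of $\GL_n(\FF_q)$, so that any unipotently supported class function $\psi$ expands as $\psi=\sum_\nu \psi(u_\nu)\,\pi_\nu$. Part (a) then yields
\[
\Gamma_\lambda \;=\; \sum_{\nu \preceq \lambda} \Gamma_\lambda(u_\nu)\,\pi_\nu.
\]
Proposition~\ref{GGGUnipotentCharactersCor} states that $\{\Gamma_\lambda : \lambda\vdash n\}$ is a basis for the same unipotently supported class function space, which has the same dimension as $\{\pi_\nu : \nu\vdash n\}$. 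Fixing any linear extension of the dominance order $\preceq$ to a total order on the partitions of $n$, the change-of-basis matrix from $\{\Gamma_\lambda\}$ to $\{\pi_\nu\}$ is triangular, and invertibility forces each diagonal entry $\Gamma_\lambda(u_\lambda)$ to be nonzero.

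The main obstacle is essentially bookkeeping: verifying that $\ctr(\lambda')$ is a reordering of $\lambda'$ so that Lemma~\ref{GGGUnipotentClasses} applies, and setting up the triangularity argument compatibly with the partial order $\preceq$ by passing to a linear extension. Once those are in place, no further computation is needed, since the linear independence of $\{\Gamma_\lambda\}$ has already been established.
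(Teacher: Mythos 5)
Your proof is correct and follows essentially the same two-step approach as the paper: part (a) is the support-of-induced-characters argument combined with Lemma~\ref{GGGUnipotentClasses} applied to $\alpha = \ctr(\lambda')$, and part (b) is the triangularity argument using the basis fact from Proposition~\ref{GGGUnipotentCharactersCor}. The paper states (b) more tersely as a "consequence of (a) and the basis property," which is exactly the change-of-basis argument you spell out.
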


\begin{proof} (a) Recall that $\Gamma_\lambda$ is induced from $U_\alpha$ for a composition $\alpha$ that is a permutation of $\lambda'$. By Lemma~\ref{GGGUnipotentClasses}, we have
\[
		\Gamma_\lambda(u) = 0
\]
unless $u$ is in the unipotent conjugacy class indexed by $\mu$ for a partition $\mu \preceq \lambda$.

(b) This is a consequence of (a), along with the fact that $\{\Gamma_\lambda\mid \lambda \vdash n\}$ is a basis for the space of class functions of $G$ with unipotent support.
\end{proof}

We obtain a characterization of the generalized Gelfand--Graev characters.

\begin{theorem}[Geck--H\'ezard \cite{MR2468596}] \label{GGGCharacterization} Suppose that $f\in \uscf(\GL_n)$ and $\lambda\vdash n$.  Then $f$ satisfies
\begin{enumerate}
    \item[(G1)] $\langle f , \chi^{\mu^{(1)}} \rangle = 0$ unless $\mu \succeq \lambda$, and
    \item[(G2)] $f(u_\mu) = 0$ unless $\mu \preceq \lambda$,
\end{enumerate}
if and only if $f = c\Gamma_\lambda$ for some constant $c\in\CC$.
\end{theorem}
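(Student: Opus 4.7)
The forward direction is immediate from the two preceding propositions: if $f = c\Gamma_\lambda$, then Proposition~\ref{GGGUnipotentCharactersCor} yields (G1) and Proposition~\ref{GGGUnipotentClassesCor} yields (G2).

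For the converse, my plan is to expand $f = \sum_{\nu \vdash n} c_\nu \Gamma_\nu$ using the basis from Proposition~\ref{GGGUnipotentCharactersCor} and show $c_\nu = 0$ whenever $\nu \neq \lambda$. The argument splits cleanly into two parts that exploit the two opposite triangularities already established: $\langle \Gamma_\nu, \chi^{\mu^{(1)}}\rangle$ is ``upper triangular'' in the sense that it vanishes unless $\mu \succeq \nu$ (with $1$ on the diagonal), while $\Gamma_\nu(u_\mu)$ vanishes unless $\mu \preceq \nu$ (with nonzero diagonal). The key insight is to pair (G1) with the first triangularity and (G2) with the second.

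In the first part, I use (G1) to force $c_\nu = 0$ whenever $\nu \nsucceq \lambda$. Suppose for contradiction this fails and choose $\nu_0$ minimal in $\preceq$ among those $\nu$ with $c_\nu \neq 0$ and $\nu \nsucceq \lambda$. Expanding
\[
\langle f, \chi^{\nu_0^{(1)}}\rangle = \sum_\nu c_\nu \langle \Gamma_\nu, \chi^{\nu_0^{(1)}}\rangle,
\]
triangularity restricts the sum to $\nu \preceq \nu_0$. Any $\nu \prec \nu_0$ with $c_\nu \neq 0$ must, by minimality of $\nu_0$, satisfy $\nu \succeq \lambda$; but then $\nu_0 \succ \nu \succeq \lambda$ contradicts $\nu_0 \nsucceq \lambda$. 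So only the diagonal term survives, yielding $\langle f, \chi^{\nu_0^{(1)}}\rangle = c_{\nu_0} \neq 0$, which contradicts (G1).

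In the second part, I use (G2) to force $c_\nu = 0$ whenever $\nu \succ \lambda$ by a symmetric maximality argument: pick $\nu_0$ maximal in $\preceq$ among those $\nu$ with $c_\nu \neq 0$ and $\nu \succ \lambda$, and evaluate $f(u_{\nu_0}) = \sum_\nu c_\nu \Gamma_\nu(u_{\nu_0})$. The opposite triangularity restricts the sum to $\nu \succeq \nu_0$, and any strict such $\nu$ with $c_\nu \neq 0$ would give $\nu \succ \nu_0 \succ \lambda$, contradicting maximality; so only $c_{\nu_0}\Gamma_{\nu_0}(u_{\nu_0}) \neq 0$ survives, contradicting (G2) (which applies since $\nu_0 \succ \lambda$ implies $\nu_0 \npreceq \lambda$). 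Combining the two parts leaves only $c_\lambda$ possibly nonzero, so $f = c_\lambda \Gamma_\lambda$. There is no real obstacle beyond correctly matching each condition to the appropriate partial order and extremal choice; the work was already done in establishing the two triangularities.
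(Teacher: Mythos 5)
Your proof is correct and follows essentially the same route as the paper's: the paper observes that (G1) forces $f=\sum_{\mu\succeq\lambda}a_\mu\Gamma_\mu$ and (G2) forces $f=\sum_{\mu\preceq\lambda}b_\mu\Gamma_\mu$, and concludes immediately since $\{\Gamma_\mu\}$ is a basis. Your minimal/maximal extremal-element arguments simply spell out the triangularity implications that the paper leaves implicit.
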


\begin{proof} By Propositions \ref{GGGUnipotentClassesCor} and \ref{GGGUnipotentCharactersCor}, any multiple $c\Gamma_\lambda$ for $c\in \CC$ satisfies  (G1) and (G2). Since the Gelfand--Graev characters form a basis for the class functions with unipotent support, any $f\in \uscf(\GL_n)$ satisfying (G1)  and (G2) is simultaneously of the form
\[
         \sum_{\mu \succeq \lambda} a_\mu \Gamma_\mu   \overset{(G1)}{=}f\overset{(G2)}{=} \sum_{\mu \preceq \lambda} b_\mu \Gamma_\mu,
\]
as desired.
\end{proof}

Using Lemmas  \ref{GGGUnipotentClasses} and \ref{GGGUnipotentCharacters} in addition to Theorem \ref{GGGCharacterization}, we obtain the following sufficient condition for an induced character to be a generalized Gelfand--Graev character.

\begin{corollary}\label{GGGSufficiency}
Let $U\in \cU_n$ and $\eta\in\cS_n^\nn$ be a non-nesting set partition such that 
\begin{enumerate}
\item[(a)] $u_\eta\in U$,
\item[(b)] $[U,U] \subseteq \ker(\gamma_\eta)$,
\item[(c)] $U\subseteq U_{\alpha}$ for some composition $\alpha\vDash n$ that is a re-arrangement of $\bl(\eta)'$.
\end{enumerate}
Then
$$\Ind_U^{\GL_n(\FF_q)}(\gamma_\eta) =  \frac{|U_{\ctr(\bl(\eta)')}|}{|U|}\Gamma_{\bl(\eta)}.$$
\end{corollary}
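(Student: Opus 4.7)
The plan is to apply Theorem \ref{GGGCharacterization} to the induced character $f = \Ind_U^{\GL_n(\FF_q)}(\gamma_\eta)$ with the partition $\lambda = \bl(\eta)$, and then pin down the scalar by a dimension count. Hypotheses (a) and (b) already ensure that $\gamma_\eta$ is a well-defined linear character of $U$, and since every element of $U \subseteq \UT_n(\FF_q)$ is unipotent, so is every $\GL_n(\FF_q)$-conjugate of every element of $U$; hence $\supp(f)$ lies in the unipotent variety of $\GL_n(\FF_q)$, so $f \in \uscf(\GL_n)$ and Theorem \ref{GGGCharacterization} is applicable.

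Condition (G1) of Theorem \ref{GGGCharacterization} follows immediately from Corollary \ref{corGGG}, whose hypotheses are exactly (a) and (b); this gives $\langle f, \chi^{\mu^{(1)}}\rangle = 0$ unless $\mu \succeq \bl(\eta)$. For condition (G2), I would feed hypothesis (c) into Lemma \ref{GGGUnipotentClasses}. By the standard induction formula, $f(u_\mu)$ can only be nonzero when $u_\mu$ is $\GL_n(\FF_q)$-conjugate to some element of $U$; by (c), that element also lies in $U_\alpha$, where $\alpha$ is a rearrangement of $\bl(\eta)'$. Lemma \ref{GGGUnipotentClasses} (applied to $U_\alpha$ with the role of $\mu$ played by $\bl(\eta)$) then forces the Jordan type $\mu$ of $u_\mu$ to satisfy $\mu \preceq \bl(\eta)$, which is precisely (G2).

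With both (G1) and (G2) verified, Theorem \ref{GGGCharacterization} yields $f = c\,\Gamma_{\bl(\eta)}$ for some $c \in \CC$, and the value of $c$ falls out of a dimension comparison:
$$\dim f = \frac{|\GL_n(\FF_q)|}{|U|}, \qquad \dim \Gamma_{\bl(\eta)} = \frac{|\GL_n(\FF_q)|}{|U_{\ctr(\bl(\eta)')}|},$$
so $c = |U_{\ctr(\bl(\eta)')}|/|U|$, as claimed. I do not expect a serious obstacle: Theorem \ref{GGGCharacterization} does the heavy lifting, and (G1) is ``off the shelf'' from Corollary \ref{corGGG}. The one step that deserves care is the support argument for (G2), where one must translate ``$u_\mu \in \supp(f)$'' into ``$u_\mu$ is literally conjugate into $U$, hence into $U_\alpha$'' before invoking Lemma \ref{GGGUnipotentClasses}; everything else is essentially bookkeeping.
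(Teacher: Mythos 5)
Your proposal is correct and follows essentially the same route as the paper: verify (G1) via Lemma \ref{GGGUnipotentCharacters} (equivalently Corollary \ref{corGGG}), verify (G2) via hypothesis (c) together with Lemma \ref{GGGUnipotentClasses}, invoke Theorem \ref{GGGCharacterization}, and then pin down the scalar by comparing degrees. The only cosmetic difference is that you cite Corollary \ref{corGGG} where the paper cites the underlying Lemma \ref{GGGUnipotentCharacters}, which is immaterial.
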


\begin{proof}
By Lemmas  \ref{GGGUnipotentClasses} and \ref{GGGUnipotentCharacters} and Theorem \ref{GGGCharacterization},
$$
\Ind_U^{\GL_n(\FF_q)}(\gamma_\eta)=c  \Gamma_{\bl(\eta)}=c\Ind_{U_{\ctr(\bl(\eta)')}}^{\GL_n(\FF_q)}(\gamma_{\bl(\eta)})$$
for some $c\in \QQ_{>0}$.  Evaluating at 1,
$$c=\frac{\Ind_U^{\GL_n(\FF_q)}(\gamma_\eta)(1)}{\Gamma_{\bl(\eta)}(1)}=\frac{|\GL_n(\FF_q)|/|U|}{|\GL_n(\FF_q)|/|U_{\ctr(\bl(\eta)')}|}=\frac{|U_{\ctr(\bl(\eta)')}|}{|U|}.$$
\end{proof}

\begin{remark}
The conditions in Corollary \ref{GGGSufficiency} are too strong to be necessary.  In particular, one can show that certain supercharacters of the usual Diaconis--Isaacs \cite{MR2373317} algebra group theory induce to generalized Gelfand--Graev characters even though they do not fit in the appropriate unipotent radical.
\end{remark}

\section{GGG characters from non-nesting supercharacters of $\UT_n(\FF_q)$}\label{SectionSupercharacters}

Since $\UT_n(\FF_q)\subseteq \GL_n(\FF_q)$ contains all of the subgroups that we are inducing from, it is natural to try to classify the representations of $\UT_n(\FF_q)$ that induce to GGG characters.   This sections shows that even though the representation theory of $\UT_n(\FF_q)$ is wild, we already know the representations that induce to GGG characters from the study of supercharacters.

We fix a nontrivial homomorphism
$$\vartheta:\mathbb{F}_q^+ \rightarrow \mathbb{C}^\times.$$

\subsection{A supercharacter theory from non-nesting set partitions}

Retaining the notation from Section \ref{SectionUnipotentGroups}, for a non-nesting set partition $\eta\in \cS_n^\nn$, the group
\[
		U_\eta = \{u \in \UT_n(\FF_q) \mid u_{jk}=0 \text{ if there exists $i\larc{}l\in \eta-\{j\larc{}k\}$ with $i\leq j< k\leq l$}\},
\]
has an associated nilpotent $\FF_q$-algebra $\mathfrak{u}_\eta=U_\eta-\Id_n$.  For any function 
$$\begin{array}{r@{\ }c@{\ }c@{\ }c}\eta^\times: &\eta &\longrightarrow &\FF_q^\times\\
& i\frown j & \mapsto & \eta_{ij},\end{array}$$
 we define a linear character $\gamma_{\eta^\times}$ of $U_\eta$ by
\[
		\gamma_{\eta^\times}(u) =  \prod_{i \frown j \in \eta} \vartheta(\eta_{ij}u_{ij}).
\]

\begin{example} For $n=8$,
\[
		U_{\begin{tikzpicture}[scale=.3]
	\foreach \x in {1,...,8}
	{	\node (\x) at (\x,0) [inner sep = -1pt] {$\bullet$};
		\node at (\x,-.7) {$\scs \x$};}
	\draw (1) to [out=60,in=120]  (4);
	\draw (3) to [out=60,in=120] (6);
	\draw (6) to [out=60,in=120]  (8);
\end{tikzpicture}} =
	\left\{\left(\begin{array}{cccccccc}
	1 & 0 & 0 & \ast & * & * & * & * \\
	0 & 1 & 0 & 0 & * & * & * & *  \\
	0 & 0 & 1 & 0 & 0 & \ast & * & *\\
	0 & 0 & 0 & 1 & 0 & 0 & * & * \\
	0 & 0 & 0 & 0 & 1 & 0 & * & * \\
	0 & 0 & 0 & 0 & 0 & 1 & 0 & \ast \\
	0 & 0 & 0 & 0 & 0 & 0 & 1 & 0 \\
	0 & 0 & 0 & 0 & 0 & 0 & 0 & 1
	\end{array}\right)\right\},
\]
and if $\eta^\times$ is given by $\eta_{14}=a, \eta_{36}=b,\eta_{68}=c$, then
\[
\gamma_{\eta^\times}(u) = \vartheta(au_{14}+bu_{36}+cu_{68}).
\]
\end{example}

With the above notation, let
$$\chi^{\eta^\times}_\nn=\Ind_{U_\eta}^{\UT_n(\FF_q)}(\gamma_{\eta^\times}).$$
These characters are in fact the supercharacters of a supercharacter theory of $\UT_n(\FF_q)$ \cite{andrews3}, which we will refer to as the \emph{non-nesting supercharacter theory} of $\UT_n(\FF_q)$.

\begin{theorem}[{\cite[Theorem 5.4]{andrews3}}] The subspace
$$\scf_\nn(\UT_n(\FF_q))=\CC\spanning\{\chi^{\eta^\times}_\nn\mid \eta\in \cS_n^\nn, \eta^\times:\eta\rightarrow \FF_q^\times\}\subseteq \cf(\UT_n(\FF_q))$$
is a supercharacter theory of $\UT_n(\FF_q)$.
\end{theorem}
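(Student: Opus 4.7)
The plan is to verify the closure conditions (R1) and (R2) by exhibiting $\scf_\nn(\UT_n(\FF_q))$ as a coarsening of the Andr\'e--Yan supercharacter theory of Example (E1). Since the Andr\'e--Yan theory is already known to be a bona fide supercharacter theory, showing that $\scf_\nn(\UT_n(\FF_q))$ arises from a consistent merging of both its supercharacters and its superclasses will yield closure under (R1) and (R2) for free.

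First I would show that each $\chi^{\eta^\times}_\nn$ lies in the span of the Andr\'e--Yan supercharacters. The Andr\'e--Yan supercharacter indexed by an arbitrary $\FF_q^\times$-labeled set partition $(\kappa,\kappa^\times)$ with $\kappa\in\cS_n$ is induced from an analogously defined ``zero-entry'' pattern subgroup. When $\eta\in\cS_n^\nn$ and $\kappa\supseteq\eta$ is obtained by adjoining (possibly nested) arcs, one has $U_\kappa\subseteq U_\eta$ and the restriction of $\gamma_{\eta^\times}$ to $U_\kappa$ splits as a sum of linear characters $\gamma_{\kappa^\times}$ indexed by extensions of $\eta^\times$ to $\kappa$. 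Transitivity of induction then produces an expansion
$$\chi^{\eta^\times}_\nn=\sum_{(\kappa,\kappa^\times)} c_{(\eta,\eta^\times),(\kappa,\kappa^\times)}\,\chi^{\kappa^\times},$$
with explicit nonnegative integer coefficients, placing $\scf_\nn(\UT_n(\FF_q))$ inside $\cf(\UT_n(\FF_q))$ as a subspace of class functions.

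Next, I would define a candidate non-nesting superclass $K^{\nn}_{\eta^\times}$ as the union of those Andr\'e--Yan superclasses whose non-nesting ``skeleton'' (obtained by iteratively removing the outer arc of any nested pair) equals $(\eta,\eta^\times)$. Two verifications remain: (i) every $\chi^{\eta'^\times}_\nn$ is constant on each $K^{\nn}_{\eta^\times}$; and (ii) the number of $\FF_q^\times$-labeled non-nesting set partitions equals $\dim\scf_\nn(\UT_n(\FF_q))$. For (i), a Mackey computation reduces the character value to a product of $\FF_q$-sums of the form $\sum_{t\in\FF_q}\vartheta(at)$, and the non-nesting hypothesis on $\eta'$ forces each factor to depend only on the non-nesting skeleton of the superclass parameter. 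Once (i) and (ii) hold, closure under (R1) is automatic, since the identifier functions $\One_{K^\nn_{\eta^\times}}$ then span $\scf_\nn(\UT_n(\FF_q))$; closure under (R2) follows because sums of orthogonal irreducible characters are closed under convolution.

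The main obstacle will be the combinatorial step (i), namely proving that Andr\'e--Yan character values indexed by $(\kappa,\kappa^\times)$ are invariant under modifications of $\kappa$ that preserve its non-nesting skeleton. Concretely, one must show that arcs of $\kappa$ that nest inside other arcs contribute trivially to the Mackey sum whenever they are paired against a non-nesting $\eta'$. This amounts to a careful case analysis of how matrix entries of the conjugating element interact with the zero-entry conditions cutting out $U_{\eta'}$, and it is precisely at this point that the non-nesting hypothesis is indispensable: a pair of nested arcs in $\eta'$ would allow conjugation to shift weight between arcs and destroy the clean factorization of the character sum.
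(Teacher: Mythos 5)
This theorem is not proved in the present paper; it is quoted from Andrews \cite[Theorem~5.4]{andrews3}, so there is no internal argument for you to match.

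On its own terms, your proposal breaks down at its first step. You observe (correctly) that adjoining arcs to a non-nesting $\eta$ to form $\kappa$ gives $U_\kappa\subseteq U_\eta$, and then assert that the restriction of $\gamma_{\eta^\times}$ to $U_\kappa$ ``splits as a sum of linear characters $\gamma_{\kappa^\times}$.'' But $\gamma_{\eta^\times}$ has degree one, so its restriction to any subgroup is a single degree-one character---it cannot split. Worse, any new arc $i\frown l\in\kappa\setminus\eta$ that nests inside an arc of $\eta$ forces the entry $u_{il}$ to vanish identically on $U_\kappa$ (by the defining zero-conditions of $U_\kappa$), so \emph{every} extension $\kappa^\times$ of $\eta^\times$ restricts to the same character of $U_\kappa$; there is no canonical indexing by extensions at all. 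Transitivity of induction therefore does not produce the claimed expansion $\chi^{\eta^\times}_\nn=\sum c\,\chi^{\kappa^\times}$; routing through $U_\kappa$ simply inflates the induced character by the index $[U_\eta:U_\kappa]$ rather than decomposing it. In addition, the ``non-nesting skeleton'' map on superclasses is only sketched: you would need to show that iterated outer-arc removal is well-defined (on the $\FF_q^\times$-labels as well as the arcs) and that the resulting partition of Andr\'e--Yan superclasses is compatible, in Hendrickson's sense, with the proposed merge of supercharacters; you flag this as ``the main obstacle,'' but it is where essentially all of the theorem's content lives. The high-level idea that $\scf_\nn(\UT_n(\FF_q))$ coarsens the Andr\'e--Yan theory is a reasonable viewpoint, but the decomposition mechanism you propose does not hold as stated and would need to be replaced before the rest of the argument could be attempted.
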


\begin{remark}
In relating these supercharacters to GGG characters, our choice of $\eta^\times$ becomes immaterial; that is, we might as well send all of the arcs to $1\in \FF_q^\times$.  Therefore, we will use the convention that for $\eta\in \cS_n^\nn$,
$$\gamma_\eta(u)=\prod_{i\frown j\in \eta} \vartheta(u_{ij}).$$
\end{remark}

\subsection{From non-nesting supercharacters to GGG characters}

Let  $\alpha\vDash n$ be a composition.  An $\alpha$-column tableau $T$ is a filling of the Ferrer's shape with ordered column lengths $\alpha$, such that
\begin{enumerate}
\item[(T1)] Each number $\{1,\ldots, n\}$ appears exactly once,
\item[(T2)] For $1\leq i<j\leq \ell(\alpha)$, the entries of column $i$ are strictly less than the entries of column $j$,
\item[(T3)] If row $i$ and row $j$ in $T$ have the same length, then $i<j$ implies the first entry of row $i$ is less than the first entry of row $j$.
\end{enumerate}
Let 
$$\cT_\alpha=\{\text{$\alpha$-column tableaux}\}.$$
For example, if $\alpha=(1,5,2,4)$, then 
$$\begin{tikzpicture}[scale=.5,baseline=1cm]
\foreach \x/\y/\z in {1/0/{12},1/1/6,3/1/9,1/2/3,3/2/10,1/3/1,2/3/4,3/3/{7},0/4/2,1/4/5,2/4/{8},3/4/{11}}
{
  \draw (\x,\y) +(-0.5,-0.5) rectangle ++(0.5,0.5);
  \node at (\x,\y) {$\scscs \z$};
}
\end{tikzpicture}\in \cT_\alpha
\qquad\text{and}\qquad
\begin{tikzpicture}[scale=.5,baseline=1cm]
\foreach \x/\y/\z in {1/0/{12},1/1/3,3/1/10,1/2/6,3/2/9,1/3/1,2/3/4,3/3/{7},0/4/2,1/4/5,2/4/{8},3/4/{11}}
{
  \draw (\x,\y) +(-0.5,-0.5) rectangle ++(0.5,0.5);
  \node at (\x,\y) {$\scscs \z$};
}
\end{tikzpicture}\notin\cT_\alpha.$$
We can obtain set partitions from such tableaux by letting the rows give the connected components; more formally, we have a map
$$\begin{array}{r@{\ }c@{\ }c@{\ }c}\mathrm{sp}:  & \cT_\alpha & \longrightarrow & \cS_{|\alpha|}\\
& T & \mapsto & \dd \bigcup_{\text{row $(i_1,\ldots, i_r)$}\atop \text{of $T$}} \{i_1\frown i_2,i_2\frown i_3,\ldots, i_{r-1}\frown i_r\}. \end{array}$$
 In the above example,
$$\mathrm{sp}\left(\begin{tikzpicture}[scale=.5,baseline=1cm]
\foreach \x/\y/\z in {1/0/{12},1/1/6,3/1/9,1/2/3,3/2/10,1/3/1,2/3/4,3/3/{7},0/4/2,1/4/5,2/4/{8},3/4/{11}}
{
  \draw (\x,\y) +(-0.5,-0.5) rectangle ++(0.5,0.5);
  \node at (\x,\y) {$\scscs \z$};
}
\end{tikzpicture}
\right)
=
 \begin{tikzpicture}[scale=.5,baseline=0cm]
	\foreach \x in {1,...,12}
	{	\node (\x) at (\x,0) [inner sep = -1pt] {$\bullet$};
		\node at (\x,-.45) {$\scs \x$};}
	\foreach \x/\y in {2/5,5/8,8/11,1/4,4/7,6/9,3/10}
		\draw (\x) to [out=60,in=120] (\y);
\end{tikzpicture}.
$$
  Let
$$\cT_{\alpha}^\nn =\{T\in \cT_\alpha\mid \mathrm{sp}(T)\in \cS_{|\alpha|}^\nn\}.$$
Note that for the above tableau $T\notin \cT_\alpha^\nn$, but
$$\mathrm{sp}\left( \begin{tikzpicture}[scale=.5,baseline=1cm]
\foreach \x/\y/\z in {1/0/6,1/1/5,3/1/{10},1/2/4,3/2/{9},1/3/3,2/3/8,3/3/{12},0/4/1,1/4/2,2/4/{7},3/4/{11}}
{
  \draw (\x,\y) +(-0.5,-0.5) rectangle ++(0.5,0.5);
  \node at (\x,\y) {$\scscs \z$};
}
  \end{tikzpicture}
\right)=
 \begin{tikzpicture}[scale=.5,baseline=0cm]
	\foreach \x in {1,...,12}
	{	\node (\x) at (\x,0) [inner sep = -1pt] {$\bullet$};
		\node at (\x,-.45) {$\scs \x$};}
	\foreach \x/\y in {1/2,2/7,7/11,3/8,8/12,4/9,5/10}
		\draw (\x) to [out=60,in=120] (\y);
\end{tikzpicture}
$$
is non-nesting, so this tableau is in fact in $\cT_\alpha^\nn$.  

For $T\in \cT_{\alpha}^\nn$, let $U_T=V_T\rtimes U_{\mathrm{sp}(T)}$, where
$$ V_T=\left\{u\in \UT_n(\FF_q)\ \bigg|\  \begin{array}{@{}l@{}}(u-\Id_n)_{ij}\neq 0 \text{ implies $i\larc{}k\in \mathrm{sp}(T)$ with}\\  \text{$i<j<k$, $i$ strictly North of $j$ in $T$}\end{array}\right\}.$$
Note that while $U_T\ncong U_\alpha$, it will follow from the proof of Corollary \ref{GGGConstruction} that $|U_T|=|U_\alpha|$.

For example,
$$U_{ \begin{tikzpicture}[scale=.3,baseline=.5cm]
\foreach \x/\y/\z in {1/0/6,1/1/5,3/1/{10},1/2/4,3/2/{9},1/3/3,2/3/8,3/3/{12},0/4/1,1/4/2,2/4/{7},3/4/{11}}
{
  \draw (\x,\y) +(-0.5,-0.5) rectangle ++(0.5,0.5);
  \node at (\x,\y) {$\scscs \z$};
}
  \end{tikzpicture}}=\left\{\left(\begin{array}{cccccccccccc}
  1 & \circast &   \circast & \circast & \circast & \circast & \circast & \circast & \circast & \circast & \circast & \circast  \\
    & 1 & \ast & \ast & \ast & \ast & \circast & \circast & \circast & \circast & \circast & \circast \\
    & & 1 & \ast & \ast & \ast & 0  & \circast & \circast & \circast & \circast & \circast  \\
    & & & 1 & \ast & \ast & 0 & 0 & \circast & \circast & \circast & \circast \\
    &     & & & 1 & \ast & 0 & 0 & 0 &\circast & \circast & \circast  \\
    & & & &  & 1 & 0 & 0 & 0 & 0 & \circast & \circast \\
   &    & & &  & & 1 & \ast & \ast & \ast & \circast & \circast \\
   &  &    & & & & & 1 &  \ast & \ast & 0 & \circast   \\
   &  &  &    & & & & & 1 & 0 & 0 & 0 \\
   &  &  &    & & & & & & 1 & 0 & 0 \\
   & & &  &  &    & & & & & 1 & 0 \\
   & & & &  &  &    & & & & & 1\\
  \end{array}\right)\right\},$$
where $\ast$-entries come from $V_T$ and $\circast\ $-entries come from $U_{\mathrm{sp}(T)}$.

We get the following corollary to Corollary \ref{GGGSufficiency}.

\begin{corollary} \label{GGGConstruction}  Let $\mu\vdash n$, $\alpha\vDash n$ be a rearrangement of $\mu'$ and $T\in \cT_{\alpha}^\nn$.  Then
$$\Ind_{\UT_n(\FF_q)}^{\GL_n(\FF_q)}(\chi_\nn^{\mathrm{sp}(T)})=|V_T| \Gamma_\mu.$$
\end{corollary}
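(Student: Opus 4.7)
My plan is to invoke Corollary~\ref{GGGSufficiency} with $U=U_{\mathrm{sp}(T)}$ and $\eta=\mathrm{sp}(T)$, and then identify the resulting proportionality constant as $|V_T|$. By transitivity of induction,
\[
\Ind_{\UT_n(\FF_q)}^{\GL_n(\FF_q)}(\chi_\nn^{\mathrm{sp}(T)}) = \Ind_{U_{\mathrm{sp}(T)}}^{\GL_n(\FF_q)}(\gamma_{\mathrm{sp}(T)}),
\]
so the task reduces to analyzing the right-hand side.

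I would verify the three hypotheses of Corollary~\ref{GGGSufficiency}. Hypotheses (a) and (b) are routine: $u_{\mathrm{sp}(T)}\in U_{\mathrm{sp}(T)}$ by definition, and $[U_{\mathrm{sp}(T)},U_{\mathrm{sp}(T)}]\subseteq\ker(\gamma_{\mathrm{sp}(T)})$ is a standard consequence of $\mathrm{sp}(T)\in\cS_n^\nn$. The combinatorial heart is hypothesis (c): $U_{\mathrm{sp}(T)}\subseteq U_\alpha$ for some rearrangement of $\mu'$. I take $\alpha$ to be the column-length composition of $T$, a rearrangement of $\mu'$ by (T2); the blocks of $U_\alpha$ then correspond exactly to the columns of $T$. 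The key claim to establish is that for any $j<k$ in the same column $c$ of $T$, there exists an arc $(i,l)\in\mathrm{sp}(T)$ with $i\leq j<k\leq l$. I would prove this by a case analysis on the rows of $T$ through $j$ and $k$: when the row of $T$ containing $j$ extends past column $c$, the corresponding arc out of $j$ lands on an entry strictly greater than $k$ by (T2), and thus nests around $(j,k)$; otherwise, the non-nesting of $\mathrm{sp}(T)$ combined with the orderings enforced by (T2)--(T3) produces an analogous nesting arc from a row through $k$ or a row above.

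Granted these hypotheses, Corollary~\ref{GGGSufficiency} yields
\[
\Ind_{U_{\mathrm{sp}(T)}}^{\GL_n(\FF_q)}(\gamma_{\mathrm{sp}(T)}) = \frac{|U_{\ctr(\mu')}|}{|U_{\mathrm{sp}(T)}|}\,\Gamma_\mu.
\]
Since $|U_\beta|=q^{\sum_{i<j}\beta_i\beta_j}$ depends only on the multiset of parts of $\beta$, $|U_{\ctr(\mu')}|=|U_\alpha|$, so the proportionality constant equals $|U_\alpha|/|U_{\mathrm{sp}(T)}|$. The final step is to verify the announced identity $|U_T|=|U_\alpha|$: the off-diagonal entries allowed in $V_T$ and in $U_{\mathrm{sp}(T)}$ are disjoint by construction, their union matches exactly the positions allowed in $U_\alpha$, and the semidirect product structure $U_T=V_T\rtimes U_{\mathrm{sp}(T)}$ gives $|U_T|=|V_T|\cdot|U_{\mathrm{sp}(T)}|$; hence the constant equals $|V_T|$. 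The main obstacle is the combinatorial verification of hypothesis (c) together with this position count, both of which lean heavily on the non-nesting of $\mathrm{sp}(T)$ and the tableau conditions (T1)--(T3).
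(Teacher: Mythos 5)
Your plan is structurally the same as the paper's: reduce by transitivity of induction, apply Corollary~\ref{GGGSufficiency} with $U=U_{\mathrm{sp}(T)}$, $\eta=\mathrm{sp}(T)$, note $|U_{\ctr(\mu')}|=|U_\alpha|$ since both compositions are rearrangements of $\mu'$, and then show the constant $|U_\alpha|/|U_{\mathrm{sp}(T)}|$ equals $|V_T|$. The gap is in the last step. You assert that ``the off-diagonal entries allowed in $V_T$ and in $U_{\mathrm{sp}(T)}$ are disjoint by construction, their union matches exactly the positions allowed in $U_\alpha$,'' but only the first half of this is true. The positions of $V_T$ and $U_{\mathrm{sp}(T)}$ are indeed disjoint, which is what makes $|U_T|=|V_T|\cdot|U_{\mathrm{sp}(T)}|$, but their union is \emph{not} the set of positions of $U_\alpha$: it is merely equinumerous with it, and proving that equinumerosity is precisely the nontrivial combinatorial content that the paper supplies via a bijection.

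To see concretely that the union is not $U_\alpha$, take the paper's running example with $\alpha=(1,5,2,4)$, columns $\{1\},\{2,3,4,5,6\},\{7,8\},\{9,10,11,12\}$, and rows $(1,2,7,11),(3,8,12),(4,9),(5,10),(6)$. Since $2\larc{}7\in\mathrm{sp}(T)$, $2<3<7$, and $2$ is strictly North of $3$ in $T$, the position $(2,3)$ is allowed in $V_T$; but $2$ and $3$ lie in the \emph{same} block of $U_\alpha$, so $(2,3)$ is \emph{not} a position of $U_\alpha$. Going the other way, $(6,7)$ is a position of $U_\alpha$ (column $1$ to column $2$), but $6$ is the last entry of its row so there is no arc $6\larc{}k$ and $(6,7)\notin V_T$, while the arc $5\larc{}10$ covers $6<7$ and so $(6,7)\notin U_{\mathrm{sp}(T)}$. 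Thus $(6,7)$ is in $U_\alpha$ but in neither factor of $U_T$. So your position-by-position matching fails both ways; what is true (and what the paper proves) is that the set $\cB$ of $V_T$-positions and the set $\cA$ of positions allowed in $U_\alpha$ but excluded from $U_{\mathrm{sp}(T)}$ have the same size, via an explicit bijection $\tau:\cB\to\cA$ whose construction splits into cases depending on which direction the covering arc points. Replacing your position-matching claim with that bijection argument is what is needed to close the gap; as written, your proof assumes the conclusion of the key lemma.
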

\begin{proof}
The assumption on the column values of $T$ imply that $U_{\mathrm{sp}(T)}\subseteq U_\alpha$.  Thus, by Corollary \ref{GGGSufficiency},
$$\Ind_{\UT_n(\FF_q)}^{\GL_n(\FF_q)}(\chi_\nn^{\mathrm{sp}(T)})=\frac{|U_{\ctr(\mu')}|}{|U_{\mathrm{sp}(T)}|}\Gamma_\mu.$$
We have that
$$|U_{\ctr(\mu')}|=|U_\alpha|=q^{\#\{i<j\mid \text{ column of $i$ is strictly West of column with $j$ in $T$}\}}.$$
In comparison,
$$|U_{\mathrm{sp}(T)}|=\frac{|U_\alpha|}{q^{\#\{j<k\mid i\leq j<k\leq l, i\slarc{}l\in \mathrm{sp}(T)-\{j\slarc{}k\}, \text{ column of $j$ is strictly West of column with $k$ in $T$}\}}}.$$
Consider a pair $(j,k)$ with $j$ strictly West of $k$ in $T$.  Then either the column containing $j$ or the column containing $k$ is longer.  By construction and because $\mathrm{sp}(T)$ is non-nesting, there either exists $l'>l$ such that $j\larc{}l'\in \mathrm{sp}(T)$ or $i'<i$ with $i'\larc{}k\in \mathrm{sp}(T)$.
With this observation in mind, it suffices to find a bijection between the sets
\begin{equation}\label{ABSets}
\begin{split}
\cA & =\left\{(j,k)\ \bigg|\ \begin{array}{@{}c@{}}\text{$i\leq j<k\leq l$ with $i\larc{}l\in \mathrm{sp}(T)$, $|\{i,l\}\cap \{j,k\}|=1$,}\\ \text{and $j$ is strictly West of  $k$ in $T$}\end{array}\right\}\quad\text{and}\\
\cB & = \{(i,j)\mid  \text{$i\larc{}k\in \mathrm{sp}(T)$ with $i<j<k$, $i$ strictly North of $j$ in $T$}\}.
\end{split}
\end{equation}
The obvious map $\tau:\cB\rightarrow \cA$ is given by
$$\tau(i,j)=\left\{\begin{array}{ll}
(i,j) & \text{if $i$ is strictly West of $j$ in $T$,}\\
(j,k) & \text{if $i\larc{}k\in \mathrm{sp}(T)$ and $j$ is strictly West of $k$ in $T$}.
\end{array}\right.$$
Note that this is well-defined since $(i,j)\in \cB$ implies $i\larc{}k\in \mathrm{sp}(T)$ for some $k$, so since $j$ is strictly South of $i$ it either must be either weakly West of $i$ or weakly East of $k$.

To see that $\tau$ is invertible note that any fixed $(j,k)\in \cA$ falls into exactly one of three cases:
\begin{description}
\item[Case 1.] If $i\larc{}k\in  \mathrm{sp}(T)$ with $i<j$ and $j\larc{}l\notin \mathrm{sp}(T)$ for any $l>k$, then $j$ must be South of $i$ (else, $l$ would exist since $k$ is East of $j$).  Thus, $(i,j)\in \cB$ and $\tau(i,j)=(j,k)$.
\item[Case 2.] If $i\larc{}k\notin  \mathrm{sp}(T)$ for any $i<j$ and $j\larc{}l\notin \mathrm{sp}(T)$ for some $l>k$, then $k$ must be South of $j$ (else, $i$ would exist since $j$ is West of $k$).  Thus, $(j,k)\in\cB$ and $\tau(j,k)=(j,k)$.
\item[Case 3.] If $i\larc{}k\notin  \mathrm{sp}(T)$ for some $i<j$ and $j\larc{}l\notin \mathrm{sp}(T)$ for some $l>k$, then either $i$ is North of $j$ or $i$ is South of $j$.  If $i$ is North of $j$ then $(i,j)\cB$, and $\tau(i,j)=(j,k)$.  Else, $j$ is North of $k$, so $(j,k)\cB$ and $\tau(j,k)=(j,k)$.
\end{description}
Thus, we obtain a well-defined $\tau^{-1}$ and $\tau$ is a bijection.
\end{proof}

 \begin{remark}
 Note that if $\alpha=\mu'$, then we may set $T$ equal to the column reading tableau to satisfy the hypotheses of the corollary.  From this we can conclude that we can get all the generalized Gelfand--Graev characters by inducing from non-nesting supercharacters.
 \end{remark}

\begin{lemma}
Let $\mu\vdash n$, $\alpha\vDash n$ be a rearrangement of $\mu'$, and $T\in \cT^\nn_{\alpha}$.  Then\begin{enumerate}
\item[(a)] $\gamma_{\sh(T)}$ extends to a linear character of $U_T$,
\item[(b)]
$\dd\chi^{\sh(T)}_{\nn}=|V_T|\Ind_{U_T}^{\UT_n(\FF_q)}(\gamma_{\sh(T)}).$
\end{enumerate}
\end{lemma}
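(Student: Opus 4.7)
The plan is to prove (a) by direct verification of multiplicativity of the natural arc-product extension, and then obtain (b) by induction in stages combined with an orbit analysis of the extensions.

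For part (a), set $\eta:=\mathrm{sp}(T)$ and define the extension $\gamma_\eta:U_T\to\CC^\times$ by the same arc-product formula $\gamma_\eta(u)=\prod_{i\frown k\in\eta}\vartheta(u_{ik})$. This formula restricts to the trivial character on $V_T$, because every $V_T$-position $(i,j)$ requires an arc $(i,k)\in\eta$ with $j<k$, while $i$ is the left endpoint of at most one arc of $\eta$; hence $V_T$-positions are disjoint from $\eta$-arc positions. Multiplicativity on $U_T$ reduces to showing $\sum_{i<j<k}u_1(i,j)u_2(j,k)=0$ for each arc $(i,k)\in\eta$ and all $u_1,u_2\in U_T$. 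A case analysis dispatches this: any factor at a $U_\eta$-position is strictly nested by $(i,k)$ and therefore zero (this handles $u_1(i,j)$ when $(i,j)$ is $U_\eta$-type, and symmetrically $u_2(j,k)$); a pair of $V_T$-factors would force $i$ strictly North of $j$ (from the $V_T$-condition on $(i,j)$) and $j$ strictly North of $k$ (from the $V_T$-condition on $(j,k)$), contradicting the fact that $(i,k)\in\eta$ places $i$ and $k$ in the same row of $T$.

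For part (b), induction in stages gives
$$\chi^\eta_\nn=\Ind_{U_\eta}^{\UT_n(\FF_q)}(\gamma_\eta)=\Ind_{U_T}^{\UT_n(\FF_q)}\bigl(\Ind_{U_\eta}^{U_T}(\gamma_\eta)\bigr).$$
Applying the projection formula to the extension from (a) yields $\Ind_{U_\eta}^{U_T}(\gamma_\eta)=\gamma_\eta\otimes\Ind_{U_\eta}^{U_T}(\mathbf 1_{U_\eta})$. A non-nesting argument parallel to (a) shows that $V_T$ is abelian and that $U_\eta\trianglelefteq U_T$ with abelian quotient $V_T$, so that $\Ind_{U_\eta}^{U_T}(\mathbf 1)$ is the regular representation of $V_T$ inflated to $U_T$, namely $\bigoplus_{\psi\in\widehat{V_T}}\mathrm{Inf}\,\psi$. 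Consequently $\Ind_{U_\eta}^{U_T}(\gamma_\eta)$ is the direct sum of the $|V_T|$ distinct one-dimensional extensions $\tilde\gamma_\psi:=\gamma_\eta\cdot\mathrm{Inf}\,\psi$ of $\gamma_\eta$ to $U_T$, so the target identity reduces to
$$\sum_{\psi\in\widehat{V_T}}\Ind_{U_T}^{\UT_n(\FF_q)}(\tilde\gamma_\psi)=|V_T|\,\Ind_{U_T}^{\UT_n(\FF_q)}(\gamma_\eta).$$

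The main obstacle is establishing that $\Ind_{U_T}^{\UT_n(\FF_q)}(\tilde\gamma_\psi)=\Ind_{U_T}^{\UT_n(\FF_q)}(\gamma_\eta)$ for every $\psi\in\widehat{V_T}$, so that the sum collapses. I would handle this by direct evaluation of the Frobenius induced-character formula: for each $g\in\UT_n(\FF_q)$ and each $V_T$-position $(i,j)$ with corresponding arc $(i,k)\in\eta$, I would identify a ``shift coordinate'' in $\UT_n(\FF_q)$ (a single root-subgroup parameter in the conjugator $x$) whose variation alters only the $(i,j)$-entry of $xgx^{-1}$ while leaving every arc entry of $\eta$ fixed; summing over that free parameter then produces an $\FF_q$-character sum $\sum_{t\in\FF_q}\vartheta(\alpha t)$ which vanishes whenever $\psi$ is non-trivial on the $(i,j)$-coordinate. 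Exhibiting these shift coordinates compatibly for every $V_T$-position---using the non-nesting structure of $\eta$ and the row discipline of $T$ that defines $V_T$-positions---is the combinatorial heart of (b). Once all non-trivial $\psi$ contribute zero, the remaining $|V_T|$ trivial contributions recover $|V_T|\,\Ind_{U_T}^{\UT_n(\FF_q)}(\gamma_\eta)$, completing the proof.
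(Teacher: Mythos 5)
Part~(a) is fine, and its case analysis coincides with the paper's argument: for each arc $i\larc{}k\in\mathrm{sp}(T)$ and each $j$ strictly between, one of the two positions $(i,j)$, $(j,k)$ is forced to be zero on $U_T$, using the North/South discipline of $T$. The minor difference (you phrase it via the bilinear sum $\sum_j(u_1)_{ij}(u_2)_{jk}$ rather than via $[U_T,U_T]\subseteq\ker\gamma$) is cosmetic.

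Part~(b), however, has a genuine gap, and the overall route cannot be made to work as you have set it up. The key structural assumption---that $V_T$ is abelian, so that $\Ind_{U_{\sh(T)}}^{U_T}(\mathbf 1)$ decomposes into $|V_T|$ linear characters---is false in general. In the paper's own running example with $\alpha=(1,5,2,4)$, the positions $(2,3)$, $(3,4)$, and $(2,4)$ all lie in $V_T$, so $[1+ae_{23},\,1+be_{34}]=1+abe_{24}\neq 1$, and $V_T$ is nonabelian. Consequently $\Ind_{U_{\sh(T)}}^{U_T}(\gamma_{\sh(T)})$ is not a sum of $|V_T|$ one-dimensional extensions $\tilde\gamma_\psi$, and the Clifford-theoretic reduction you rely on does not apply.

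Even setting that aside, the mechanism you describe does not establish the identity you correctly isolated as the crux, namely $\Ind_{U_T}^{\UT_n(\FF_q)}(\tilde\gamma_\psi)=\Ind_{U_T}^{\UT_n(\FF_q)}(\gamma_{\sh(T)})$ for every $\psi$. A ``shift coordinate'' that varies only a $V_T$-entry and produces a vanishing sum $\sum_t\vartheta(\alpha t)$ would show $\Ind(\tilde\gamma_\psi)(g)=0$ at those $g$ where the shift acts nontrivially, not that $\Ind(\tilde\gamma_\psi)$ equals $\Ind(\gamma_{\sh(T)})$; indeed $\Ind(\tilde\gamma_\psi)(1)=[\UT_n(\FF_q):U_T]\neq 0$, so such a blanket vanishing is impossible. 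Finally, the closing bookkeeping is inconsistent with the reduction: if all nontrivial $\psi$ contributed zero, the sum $\sum_\psi\Ind(\tilde\gamma_\psi)$ would reduce to the single term $\Ind(\gamma_{\sh(T)})$, not $|V_T|\,\Ind(\gamma_{\sh(T)})$---there is only one trivial character of $V_T$. The paper's route avoids all of this: it uses normality of $U_{\sh(T)}$ in $U_T$ to match the two sides on $U_{\sh(T)}$, and then applies a shift-coordinate argument to a maximal nonzero $\cB$-position to show that $\Ind_{U_T}^{\UT_n(\FF_q)}(\gamma_{\sh(T)})$ vanishes on $U_T\setminus U_{\sh(T)}$.
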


\begin{proof}
(a)  To show that the support of $\gamma_{\sh(T)}$ is trivial on the commutator subgroup of $U_T$, it suffices to show that if $i<j<k$ and $i\larc{}k\in \mathrm{sp}(T)$, then either $u_{ij}=0$ or $u_{jk}=0$ for all $u\in U_T$.  In fact, by the definition of $U_{\mathrm{sp}(T)}$ it suffices to show $u_{ij}=0$ or $u_{jk}=0$ for all $u\in V_T$.

Suppose $u_{ij}\neq 0$ for some $u\in V_T$.  Then $i$ is North of $j$ in $T$.  Thus, $j$ is South of $k$ and $u_{jk}=0$ for all $u\in V_T$.  Suppose $u_{jk}\neq 0$ for some $u\in V_T$.  Then there exists $l>k$ such that $j\larc{}l\in \mathrm{sp}(T)$ and $j$ is North of $k$.  In particular, $j$ is North of $i$, so $u_{ij}=0$ for all $u\in V_T$.

(b)  Note that since $U_{\mathrm{sp}(T)}\triangleleft U_T$, we have
$$\chi^{\sh(T)}_{\nn}(u)=|V_T|\Ind_{U_T}^{\UT_n(\FF_q)}(\gamma_{\sh(T)})(u)$$
for all $u\in U_{\mathrm{sp}(T)}$.  Thus, it suffices to show that $u\in U_T-U_{\mathrm{sp}(T)}$ implies
$$\Ind_{U_T}^{\UT_n(\FF_q)}(\gamma_{\sh(T)})(u)=0.$$
Fix $u\in U_T-U_{\mathrm{sp}(T)}$.  Let $\cB$ be as in (\ref{ABSets}).  Then there exists a unique $(j,k)\in \cB$ such that
\begin{itemize}
\item $u_{jk}\neq 0$;
\item $u_{ik}\neq 0$ and $(i,k)\in \cB$ implies $i<j$; and
\item $u_{jl}\neq 0$ and $(j,l)\in \cB$ implies $k<l$.
\end{itemize}
Since $(j,k)\in \cB$, there exists $l>k$ such that $j\larc{}l\in \sh(T)$.  Then
\begin{align*}
\Ind_{U_T}^{\UT_n(\FF_q)}(\gamma_{\sh(T)})(u)&=\frac{1}{|V_T|} \sum_{g\in \UT_n(\FF_q)\atop gug^{-1}\in U_T} \gamma_{\sh(T)}(gug^{-1})\\
&=\frac{1}{|V_T|q} \sum_{t\in \FF_q}\sum_{g\in \UT_n(\FF_q)\atop gug^{-1}\in U_T} \gamma_{\sh(T)}\Big((1+te_{kl})gug^{-1}(1-te_{kl})\Big),
\end{align*}
since if $u_{ik}\neq 0$, then either $i>k$ or $i<j$; either way we have that $(1+te_{kl})gug^{-1}(1-te_{kl})\in U_T$.  The minimality of $k$ and maximality of $j$ imply that if $gug^{-1}\in U_T$, then $(gug^{-1})_{jk}=u_{jk}$.  Thus,
$$\Ind_{U_T}^{\UT_n(\FF_q)}(\gamma_{\sh(T)})(u)=\frac{1}{|V_T|q} \sum_{t\in \FF_q}\sum_{g\in \UT_n(\FF_q)\atop gug^{-1}\in U_T} \vartheta(tu_{jk})\gamma_{\sh(T)}\Big(gug^{-1}\Big)=0,$$
since $u_{jk}\neq 0$.
\end{proof}

\begin{corollary}\label{GGGMult1Construction}
Let $\mu\vdash n$ and $T$ be the column reading tableau of the standard Ferrer diagram.  Then
$$\frac{1}{|V_T|} \chi_{\nn}^{\sh(T)}(\gamma_{\sh(T)})$$
is a character of $\UT_n(\FF_q)$ that induces to $\Gamma_\mu$ (without scaling).
\end{corollary}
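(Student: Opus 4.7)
The plan is to interpret the conclusion as two claims: that $\tfrac{1}{|V_T|}\chi_\nn^{\sh(T)}$ is a genuine character of $\UT_n(\FF_q)$, and that its parabolic induction to $\GL_n(\FF_q)$ is exactly $\Gamma_\mu$. Once $T$ is shown to lie in $\cT^\nn_{\mu'}$, both claims fall out immediately from the preceding lemma and Corollary \ref{GGGConstruction}.

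First I would verify $T\in\cT^\nn_{\mu'}$. Setting $\alpha=\mu'$, the column lengths of the standard Ferrer diagram of $\mu$ are exactly $\mu'_1\geq\mu'_2\geq\cdots$, so $T$ has the correct shape. Conditions (T1) and (T2) are immediate from the column reading rule. For (T3), observe that row $i$ starts with entry $i$ (since column $1$ hits every row), so rows of equal length are automatically ordered by first entry. Finally, $\mathrm{sp}(T)$ only contains arcs joining the $r$-th entry of column $c$ to the $r$-th entry of column $c+1$; any two such arcs have left endpoints in the same column and right endpoints in the same column, so neither is nested inside the other, giving $\mathrm{sp}(T)\in \cS_n^\nn$.

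With $T\in \cT^\nn_{\mu'}$ in hand, part (a) of the preceding lemma says $\gamma_{\sh(T)}$ is a linear character of $U_T$, so $\Ind_{U_T}^{\UT_n(\FF_q)}(\gamma_{\sh(T)})$ is a genuine character of $\UT_n(\FF_q)$; part (b) of the same lemma identifies this induced character with $\tfrac{1}{|V_T|}\chi_\nn^{\sh(T)}$, proving the first claim. For the second, I would apply Corollary \ref{GGGConstruction} (with the same $\alpha=\mu'$) to obtain
$$\Ind_{\UT_n(\FF_q)}^{\GL_n(\FF_q)}\bigl(\chi_\nn^{\sh(T)}\bigr)=|V_T|\,\Gamma_\mu,$$
and then divide through by $|V_T|$; transitivity of induction rewrites the left-hand side as $\Ind_{U_T}^{\GL_n(\FF_q)}(\gamma_{\sh(T)})=\Gamma_\mu$, which is the ``without scaling'' conclusion. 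The only step with any content is the tableau verification, and since the column reading tableau is essentially designed to make $\mathrm{sp}(T)$ non-nesting, I do not anticipate this being a real obstacle.
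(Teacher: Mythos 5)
Your proof is correct and follows the route the paper intends: the paper gives no explicit proof of this corollary because it is meant to follow immediately from the preceding lemma (parts (a) and (b)) together with Corollary~\ref{GGGConstruction}, once one observes that the column reading tableau $T$ lies in $\cT^\nn_{\mu'}$; that observation is already flagged in the remark after Corollary~\ref{GGGConstruction}. One small imprecision: the sentence asserting non-nesting because ``any two such arcs have left endpoints in the same column and right endpoints in the same column'' only handles arcs between the same pair of consecutive columns; for arcs $a_1\frown b_1$, $a_2\frown b_2$ spanning different column pairs with $a_1<a_2$, the left endpoint $a_2$ lies in a weakly later column than $a_1$, hence $b_2$ lies in a strictly later column than $b_1$ (or the same column with a larger row index), so $b_1<b_2$ and nesting is again impossible---the conclusion is right, but the stated reason does not cover this case.
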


\begin{remark}
In defining a supercharacter theory, the supercharacters are typically defined only up to scalar multiples.  Corollary \ref{GGGMult1Construction} suggests that a good scaling for the supercharacter theory described in \cite{andrews3} is obtained by dividing by $|V_T|$.
\end{remark}

\section{Symmetric functions}\label{SectionSymmetricFunctions}

In this section we study the images of the generalized Gelfand--Graev characters under the characteristic map. In particular, we calculate the multiplicities of the irreducible characters of $\GL_n(\FF_q)$ in the generalized Gelfand--Graev characters.

\subsection{Translating between $\uscf$ and $\ucf$}

Let
$$\uscf(\GL)=\{\chi\in \cf(\GL)\mid \chi(g)\neq 0\text{ implies $g$ unipotent}\}\overset{\ch}{\longleftrightarrow} \Sym(X^{(1)})$$
be the space of unipotently supported class functions, and let
$$\ucf(\GL)= \CC\spanning\{\chi\in \Irr(\GL)\mid \langle \chi, \Ind_B^{\GL}(\One)\rangle\neq 0\} \overset{\ch}{\longleftrightarrow}\Sym(Y^{(1)})$$
be the space spanned by the unipotent characters. From the point of view of symmetric functions, there are natural projective algebra homomorphisms
\begin{equation} \label{PiXDefinition}
\begin{array}{rccc}
\pi_X:& \Sym(\GL)  & \longrightarrow & \Sym(X^{(1)})\\
& \dd\sum_{\mu\in \cP^\Phi} c_\mu(q) p_\mu & \mapsto & \dd\sum_{\lambda \in \cP} c_{\lambda^{(1)}}(q) p_{\lambda^{(1)}}
\end{array}
\end{equation}
and
\begin{equation} \label{PiYDefinition}
\begin{array}{rccc}
\pi_Y:& \Sym(\GL) & \longrightarrow & \Sym(Y^{(1)})\\
& \dd\sum_{\nu\in \cP^\Theta} c_\nu(q) p_\nu & \mapsto & \dd\sum_{\rho \in \cP} c_{\rho^{(1)}}(q) p_{\rho^{(1)}}.
\end{array}
\end{equation}
\begin{remark}
Since $\Sym(X^{(f)})$ and $\Sym(X^{(f')})$ are orthogonal for $f\neq f'$ and $\Sym(Y^{(\vphi)})$ and $\Sym(Y^{(\vphi')})$ are orthogonal for $\varphi\neq \vphi'$,
\begin{align*}
\langle \psi, g(X^{(1)})\rangle =\langle \pi_X(\psi),g(X^{(1)})\rangle \qquad &\text{for $\psi\in \Sym(\GL_n)$, $g(X^{(1)})\in \Sym_n(X^{(1)})$},\\
\langle \psi, g(Y^{(1)})\rangle =\langle \pi_Y(\psi),g(Y^{(1)})\rangle \qquad &\text{for $\psi\in \Sym(\GL_n)$, $g(Y^{(1)})\in \Sym_n(Y^{(1)})$}.
\end{align*}
\end{remark}

By composing these projections with the characteristic map, we obtain projections
\begin{align*}
        \ch^{-1}\circ\pi_X\circ\ch &: \cf(\GL) \to \uscf(\GL) \quad \text{and} \\
        \ch^{-1}\circ\pi_Y\circ\ch &: \cf(\GL) \to \ucf(\GL).
\end{align*}

The images of the power sum symmetric functions under these projections are given by the following lemma, which follows directly from \cite[IV.4.5]{MR1354144}.
\begin{lemma}\label{projectionformula} For $k\in\ZZ_{\geq 0}$, $\vphi\in \Theta$, and $f\in \Phi$,
\begin{align*}
    \pi_X(p_k(Y^{(\vphi)}))&=(-1)^{|\vphi| k-1} p_{|\vphi|k}(X^{(1)}) \quad \text{and}\\
    \pi_Y(p_k(X^{(f)}))&=\frac{(-1)^{|f|k}}{1-q^{|f|k}} p_{|f|k}(Y^{(1)}).
\end{align*}
\end{lemma}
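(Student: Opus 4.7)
The plan is to reduce both identities to the explicit change-of-basis formulas between the two algebraically independent families of power-sum generators $\{p_k(X^{(f)}) : f\in\Phi,\ k\geq 1\}$ and $\{p_k(Y^{(\vphi)}) : \vphi\in\Theta,\ k\geq 1\}$ of $\Sym(\GL)$. One direction is precisely the relation (\ref{VariableRelation}); the other is its inverse, which follows from the orthogonality of multiplicative characters of the finite fields $\FF_{q^n}^\times$ and is recorded in \cite[IV.4.5]{MR1354144}. Once both formulas are in hand, applying $\pi_X$ and $\pi_Y$ is purely mechanical, since by construction $\pi_X$ kills every $p_m(X^{(f)})$ with $f$ other than the orbit $\{1\}\in\Phi$, and $\pi_Y$ kills every $p_m(Y^{(\vphi)})$ with $\vphi$ other than the orbit of the trivial character in $\Theta$. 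Both of these distinguished orbits have size $1$, which is what keeps the surviving coefficients as clean as stated.

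For the first identity I would apply $\pi_X$ term-by-term to the right-hand side of (\ref{VariableRelation}). Every summand indexed by $f\neq\{1\}$ vanishes, and in the one surviving summand $f=\{1\}$ one has $|f|=1$ and $\sum_{x\in f}\vphi(x)=\vphi(1)=1$, leaving $(-1)^{k|\vphi|-1}p_{k|\vphi|}(X^{(1)})$, as claimed. For the second identity, the inverse transition in \cite[IV.4.5]{MR1354144} expresses $p_k(X^{(f)})$ as a sum over $\vphi\in\Theta$ with $\vphi\subseteq\Hom(\FF_{q^{k|f|}}^\times,\CC^\times)$ of scalar multiples of $p_{k|f|/|\vphi|}(Y^{(\vphi)})$, with leading factor $(-1)^{k|f|-1}/(q^{k|f|}-1)$ and with character-sum weights $\sum_{\chi\in\vphi}\chi(y)$ for any fixed $y\in f$. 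Applying $\pi_Y$ retains only the trivial-character orbit, for which $|\vphi|=1$ and the character sum equals $1$, giving the coefficient $(-1)^{k|f|-1}/(q^{k|f|}-1)=(-1)^{k|f|}/(1-q^{k|f|})$ in front of $p_{k|f|}(Y^{(1)})$.

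The only substantive step is locating the inverse transition formula in Macdonald in the correct normalization and checking the signs; beyond that the argument is pure bookkeeping, so I do not anticipate a real obstacle.
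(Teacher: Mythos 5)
Your proposal is correct and matches the paper's intent: the paper offers essentially no proof beyond the citation ``follows directly from \cite[IV.4.5]{MR1354144},'' and your argument simply spells out what that entails — applying $\pi_X$ to the transition relation (\ref{VariableRelation}) term-by-term (only the $f=\{1\}$ summand survives, with $|f|=1$ and $\sum_{x\in f}\vphi(x)=\vphi(1)=1$), and applying $\pi_Y$ to the inverse transition from \cite[IV.4.5]{MR1354144} (only the trivial-character orbit survives, with the sign rewrite $(-1)^{k|f|-1}/(q^{k|f|}-1)=(-1)^{k|f|}/(1-q^{k|f|})$). The bookkeeping is sound and the approach is the same.
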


In addition to being useful in calculations, this lemma has the following corollary.

\begin{corollary}\label{ringisomorphism}
The restricted functions $\pi_X:\Sym(Y^{(1)})\rightarrow \Sym(X^{(1)})$ and $\pi_Y:\Sym(X^{(1)})\rightarrow \Sym(Y^{(1)})$, as well as $\ch^{-1}\circ\pi_X\circ\ch : \ucf(\GL) \to \uscf(\GL)$ and $\ch^{-1}\circ\pi_Y\circ\ch : \uscf(\GL) \to \ucf(\GL)$, are ring isomorphisms. \end{corollary}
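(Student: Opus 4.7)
My plan is to prove the symmetric-function statement first and then transfer it across $\ch$ to obtain the character-ring statement. The strategy rests on two ingredients: that $\pi_X$ and $\pi_Y$ are already $\CC$-algebra maps on the full algebra $\Sym(\GL)$, and that on restriction they send polynomial generators to non-zero scalar multiples of generators.

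First I would establish multiplicativity of $\pi_X$ on all of $\Sym(\GL)$. Under the tensor decomposition $\Sym(\GL)=\bigotimes_{f\in\Phi}\Sym(X^{(f)})$, the definition (\ref{PiXDefinition}) identifies $\pi_X$ with the identity on the tensor factor $\Sym(X^{(1)})$ tensored with the augmentation $\Sym(X^{(f)})\to\CC$ (sending every positive-degree power sum to $0$) on each remaining factor. Each such augmentation is a ring map because the product of two positive-degree power sums is again a positive-degree power sum. Therefore $\pi_X$ is a $\CC$-algebra homomorphism, and the identical argument using $\Sym(\GL)=\bigotimes_{\vphi\in\Theta}\Sym(Y^{(\vphi)})$ handles $\pi_Y$.

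Next I would apply Lemma \ref{projectionformula} with $\vphi$ the trivial orbit in $\Theta$ and $f$ the identity orbit in $\Phi$ (both of size one) to obtain, for every $k\geq 1$,
\[
\pi_X(p_k(Y^{(1)}))=(-1)^{k-1}p_k(X^{(1)}),\qquad \pi_Y(p_k(X^{(1)}))=\frac{(-1)^k}{1-q^k}p_k(Y^{(1)}).
\]
Both scalars are non-zero (the second because $q\geq 2$). Since $\Sym(X^{(1)})$ and $\Sym(Y^{(1)})$ are free commutative $\CC$-algebras on the power sums $\{p_k\}_{k\geq 1}$, any $\CC$-algebra map between them sending each generator to a non-zero scalar multiple of the corresponding generator is visibly invertible (by inverting the scalars on generators). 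Thus both restricted maps are ring isomorphisms.

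The character-ring versions follow immediately: the identifications recorded at the start of Section \ref{SectionSymmetricFunctions} give $\ch:\uscf(\GL)\xrightarrow{\sim}\Sym(X^{(1)})$ and $\ch:\ucf(\GL)\xrightarrow{\sim}\Sym(Y^{(1)})$ as restrictions of the algebra isomorphism $\ch$, so conjugating by $\ch$ transports the two symmetric-function isomorphisms into ring isomorphisms $\ch^{-1}\circ\pi_X\circ\ch:\ucf(\GL)\to\uscf(\GL)$ and $\ch^{-1}\circ\pi_Y\circ\ch:\uscf(\GL)\to\ucf(\GL)$. I do not anticipate any real obstacle; the only step that benefits from a moment of thought is recognizing $\pi_X$ and $\pi_Y$ as tensor products of identity maps with augmentations, which makes multiplicativity manifest and obviates any power-sum-by-power-sum verification.
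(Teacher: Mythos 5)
Your proof is correct and takes the same route the paper implicitly intends: the paper offers no argument for this corollary beyond the phrase ``this lemma has the following corollary,'' and the content you supply---that $\pi_X$ and $\pi_Y$ are algebra maps (which the paper itself asserts when defining them as ``natural projective algebra homomorphisms''), combined with the $|\vphi|=1$, $|f|=1$ cases of Lemma \ref{projectionformula} showing that power-sum generators go to nonzero scalar multiples of power-sum generators of the target---is exactly the calculation the corollary rests on. Your observation that $\pi_X$ (resp.\ $\pi_Y$) is the identity on one tensor factor and the augmentation on the rest is a clean way to see multiplicativity, and the transfer across the restricted isomorphisms $\ch:\uscf(\GL)\to\Sym(X^{(1)})$ and $\ch:\ucf(\GL)\to\Sym(Y^{(1)})$ is routine.
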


\begin{remark}
Note that the restricted functions $\pi_X$ and $\pi_Y$ are not inverses of each other, since for example
$$
\pi_X\circ\pi_Y\Big(p_k(X^{(1)})\Big)=\pi_X\Big(\frac{(-1)^k}{1-q^k}p_k(Y^{(1)})\Big)=\frac{1}{q^k-1}p_k(X^{(1)}).
$$
\end{remark}

\subsection{The multiplicities of the unipotent characters}

We now calculate the multiplicities of the irreducible unipotent characters of $\GL_n(\FF_q)$ in the generalized Gelfand--Graev characters; we do this by considering two bases of $\uscf(\GL_n)$.  

First, note that since the restricted function $\pi_Y:\Sym(X^{(1)})\rightarrow \Sym(Y^{(1)})$ is an algebra isomorphism, 
$$\Big\{\ch^{-1}\circ\pi_Y^{-1}\Big(s_\lambda(Y^{(1)})\Big)\mid \lambda\vdash n\Big\}$$
is a basis for $\uscf(\GL_n)$; by the remark following (\ref{PiYDefinition}),  for $\mu\vdash n$,
$$\Big\langle \pi_Y^{-1}\Big(s_\lambda(Y^{(1)})\Big),s_\mu(Y^{(1)})\Big\rangle = \Big\langle s_\lambda(Y^{(1)}),s_\mu(Y^{(1)})\Big\rangle=\delta_{\lambda\mu}.$$

The second basis $\{\delta_{\mu} \mid \mu \vdash n\}$ for $\uscf(\GL_n)$ is given by the indicator functions 
$$
		\delta_{\mu} (u) = \left\{\begin{array}{ll}
		1 &\quad \text{ if } u \text{ has unipotent Jordan canonical form of type $\mu$,} \\
		0 & \quad \text{ otherwise,}\end{array}\right.
$$
as in Section~\ref{GLnCombinatorics}.

Using the notation from \cite[III.7]{MR1354144}, we have transition matrices $Q(q)$, $X(0)$, and $(-1)^n z(q)z^{-1}$ given by, for $\rho\vdash n$,
\begin{equation*}
\begin{split}
p_\rho(X^{(1)}) &= \sum_{\mu\vdash n} Q(q)_{\mu\rho} \tilde P_\mu(X^{(1)}),\\
			&=\sum_{\mu\vdash n} X(0)_{\mu\rho} s_\mu(X^{(1)}),\\
			&=\sum_{\mu\vdash n}  [(-1)^n z(q)z^{-1}]_{\mu\rho}\pi_Y^{-1}\Big( p_\mu(Y^{(1)})\Big),\\
\end{split}
\end{equation*}
where the first two equalities are directly in  \cite[III.7]{MR1354144}), and the last equality follows from Lemma~\ref{projectionformula}  (where we invert the restricted function $\pi_Y:\uscf(\GL_n)\rightarrow \ucf(\GL_n)$).

\begin{lemma}\label{uscftransmatrix} The transition matrix from $\{\delta_{\mu} \mid \mu \vdash n\}$ to $\{\ch^{-1}\circ\pi_Y^{-1}(s_\lambda(Y^{(1)})) \mid \lambda \vdash n\}$ is given by
\[
		(-1)^nQ(q)^{-1}z(q)X(0)^{-T}.
\]
\end{lemma}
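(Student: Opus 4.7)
The plan is to chain the three transition identities for $p_\rho(X^{(1)})$ displayed immediately before the lemma: each expresses a change of basis into the power-sum basis from, respectively, $\{\tilde P_\mu(X^{(1)})\}$, $\{s_\mu(X^{(1)})\}$, and $\{\pi_Y^{-1}(p_\mu(Y^{(1)}))\}$. The route from $\delta_\mu$ to the basis $\{\ch^{-1}\circ\pi_Y^{-1}(s_\lambda(Y^{(1)}))\}$ passes through the power-sum basis in two stages, assembled by combining (i) the inverse of the $Q(q)$-identity, (ii) the diagonal third identity, and (iii) the Schur-to-power $X(0)$-identity applied in the $Y^{(1)}$-variables.

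Concretely: first, I would invert $p_\rho(X^{(1)}) = \sum_\mu Q(q)_{\mu\rho}\,\tilde P_\mu(X^{(1)})$ to expand $\tilde P_\mu(X^{(1)}) = \ch(\delta_\mu)$ in power sums, producing the factor $Q(q)^{-1}$. Second, I would invoke the third identity, which is diagonal in $\rho$, to pass from $p_\rho(X^{(1)})$ to $\pi_Y^{-1}(p_\rho(Y^{(1)}))$, contributing the diagonal factor $(-1)^n z(q) z^{-1}$. Third, I would apply the expansion $p_\nu(Y^{(1)}) = \sum_\lambda X(0)_{\lambda\nu} s_\lambda(Y^{(1)})$ and use that $\pi_Y^{-1}$ is an algebra isomorphism (Corollary \ref{ringisomorphism}) to push it inside, writing $\pi_Y^{-1}(p_\nu(Y^{(1)}))$ in the basis $\{\pi_Y^{-1}(s_\lambda(Y^{(1)}))\}$ with coefficient matrix $X(0)$. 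Combining, pulling back through $\ch^{-1}$, and collecting entries produces the desired transition matrix as a triple product of $Q(q)^{-1}$, $(-1)^n z(q) z^{-1}$, and $X(0)$.

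The main obstacle is the matrix bookkeeping at the end. To reach the stated form $(-1)^n Q(q)^{-1} z(q) X(0)^{-T}$, I would invoke the orthogonality relation $X(0)^T X(0) = z$ for irreducible characters of $S_n$, which yields the identity $X(0)^{-T} = X(0) z^{-1}$ and lets us absorb the $z^{-1}$ from the diagonal factor into $X(0)$. Since the non-diagonal matrices $Q(q)^{-1}$ and $X(0)$ do not commute with each other or with the diagonal factors, one must carefully track the index conventions — which index is the row and which the column in each change-of-basis matrix — and judiciously apply $X(0)^T X(0) = z$ to reshape the product into the claimed form.
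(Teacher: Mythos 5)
Your proposal takes essentially the same approach as the paper: recall $\ch(\delta_\mu) = \widetilde{P}_\mu(X^{(1)})$, chain the three transition identities displayed before the lemma, and simplify the resulting product with $S_n$-character orthogonality. As a small bonus, your form of the orthogonality rearrangement, $X(0)^{-T} = X(0)z^{-1}$, is the correct one, whereas the paper's stated identity $z^{-1}X(0) = X(0)^{-T}$ has the $z^{-1}$ on the wrong side of $X(0)$ (a typo, since $z^{-1}$ and $X(0)$ do not commute).
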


\begin{proof} Recall $\ch (\delta_{\mu}) = \widetilde{P}_\mu(X^{(1)})$, and the transition matrix from $\{\widetilde{P}_\mu(X^{(1)}) \mid \mu \vdash n\}$ to $\{ \pi_Y^{-1}(s_\lambda(Y^{(1)})) \mid \lambda \vdash n\}$ is
\[
        Q(q)^{-1}(-1)^nz(q)z^{-1}X(0) = (-1)^nQ(q)^{-1}z(q)X(0)^{-T},
\]
using that $z^{-1}X(0) = X(0)^{-T}$ by the orthogonality relations of the character table of the symmetric group.
\end{proof}

We now calculate the multiplicities of the irreducible unipotent characters of $\GL_n(\FF_q)$ in the generalized Gelfand--Graev characters.

\begin{theorem}\label{unipotentmultiplicity} The transition matrix from $\{\ch(\Gamma_\lambda) \mid \lambda \vdash n\}$ to $\{\pi_Y^{-1}(s_\mu(Y^{(1)})) \mid \mu \vdash n\}$ is $K(q)^T$; in particular,
\[
		\langle \Gamma_\lambda, \ch^{-1}(s_\mu(Y^{(1)})) \rangle = K_{\mu\lambda}(q).
\]
\end{theorem}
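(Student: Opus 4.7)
The plan is to identify $\Gamma_\lambda$ with an explicit candidate class function defined directly from the transformed Hall--Littlewood polynomial, and then apply the characterization in Theorem~\ref{GGGCharacterization}. Set $\widetilde{H}_\lambda(Y^{(1)};q) := \sum_{\mu\vdash n} K_{\mu\lambda}(q)\, s_\mu(Y^{(1)})$ and
\[
\hat{\Gamma}_\lambda := \ch^{-1}\bigl(\pi_Y^{-1}(\widetilde{H}_\lambda(Y^{(1)};q))\bigr).
\]
Since $\pi_Y$ restricts to a ring isomorphism $\Sym(X^{(1)}) \to \Sym(Y^{(1)})$ by Corollary~\ref{ringisomorphism}, $\hat{\Gamma}_\lambda$ lies in $\uscf(\GL_n)$, and the remark following (\ref{PiYDefinition}) gives
\[
\langle \hat{\Gamma}_\lambda, \chi^{\mu^{(1)}}\rangle = \langle \pi_Y(\ch(\hat{\Gamma}_\lambda)), s_\mu(Y^{(1)})\rangle = K_{\mu\lambda}(q).
\]
Proving $\hat{\Gamma}_\lambda = \Gamma_\lambda$ will therefore establish the ``in particular'' assertion, and since the coefficient of $\pi_Y^{-1}(s_\mu(Y^{(1)}))$ in the expansion of $\ch(\Gamma_\lambda)$ coincides with this same inner product, the full transition matrix claim follows immediately.

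To apply Theorem~\ref{GGGCharacterization} to $\hat{\Gamma}_\lambda$, condition (G1) holds automatically by the classical triangularity $K_{\mu\lambda}(q) = 0$ unless $\mu \succeq \lambda$, and the normalization $K_{\lambda\lambda}(q) = 1$ matches the value given by Proposition~\ref{GGGUnipotentCharactersCor}, which will pin down the eventual scalar $c = 1$ in the conclusion of that theorem. What remains is to verify condition (G2): $\hat{\Gamma}_\lambda(u_\mu) = 0$ unless $\mu \preceq \lambda$.

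The main obstacle is condition (G2). To handle it I plan to expand $\hat{\Gamma}_\lambda$ in the $\{\delta_\nu\}$ basis by inverting the transition matrix in Lemma~\ref{uscftransmatrix}, which yields
\[
\hat{\Gamma}_\lambda(u_\mu) = (-1)^n \bigl[X(0)^T z(q)^{-1} Q(q) K(q)\bigr]_{\mu\lambda}
\]
in the notation of \cite[III.7]{MR1354144}. The desired dominance-triangular vanishing then becomes a classical identity between the transition matrices connecting power sum, Schur, Hall--Littlewood, and Green polynomial bases --- specifically, the $t$-deformed factorization $X(q) = K(q)^T X(0)$ relating Green polynomials, Kostka--Foulkes polynomials, and the symmetric group character table --- combined with the upper-triangularity of $X(q)$ with respect to dominance order. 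Alternatively, Haiman's combinatorial formula for $\widetilde{H}_\lambda(X;q)$ referenced in the acknowledgements provides a direct cell-by-cell vanishing argument, bypassing Macdonald's transition-matrix algebra entirely.
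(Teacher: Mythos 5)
Your overall strategy coincides with the paper's: use Theorem~\ref{GGGCharacterization} together with Lemma~\ref{uscftransmatrix} and the Macdonald transition matrices to pin down the transition matrix, and check that $K(q)^T$ satisfies the required triangularity. The reorganization --- building a candidate $\hat\Gamma_\lambda = \ch^{-1}\pi_Y^{-1}(\widetilde H_\lambda)$ and verifying (G1)/(G2), rather than arguing that the transition matrix is the unique unipotent lower-triangular $M$ making $(-1)^nQ(q)^{-1}z(q)X(0)^{-T}M^{-1}$ upper-triangular and then checking $K(q)^T$ works --- is logically equivalent and buys nothing new. The handling of (G1) and of the normalizing scalar $c=1$ is correct and matches the paper.

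The gap is in the (G2) step, which is where the real content lies. You reduce it correctly to the upper-triangularity of a product of Macdonald transition matrices, but the two facts you cite do not suffice, and one of them is false: $X(q)$ (the matrix of $X^\lambda_\mu(q)$, essentially Green polynomials) is \emph{not} upper-triangular with respect to dominance --- for instance $X^\lambda_{(1^n)}(q)\neq 0$ for every $\lambda$. The triangularity that actually makes the argument work is that of $K(q)$ and $K(q^{-1})$, and extracting it requires two further inputs beyond $X(q)=K(q)^T X(0)$: the orthogonality relation for $X(q)$ (Macdonald III.7), which converts $z(q)X(q)^{-T}$ into $X(q)b(q)^{-1}$, and the identity $Q(q)=\diag(q^{n(\lambda)})X(q^{-1})$, which converts $Q(q)^{-1}X(q)$ into $\diag(q^{-n(\lambda)})K(q^{-1})^{-1}K(q)$. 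The paper carries out exactly this chain. The suggested alternative via Haiman's formula for $\widetilde H_\lambda$ is speculative; that formula gives positivity and combinatorics for the Schur expansion, but I know of no cell-by-cell argument in it that directly yields the dominance vanishing of $\hat\Gamma_\lambda$ on the unipotent class $u_\mu$, so you should not lean on it without supplying the argument.
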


\begin{proof} Let $M$ denote the transition matrix from $\{\ch(\Gamma_\lambda) \mid \lambda \vdash n\}$ to $\{\pi_Y^{-1}(s_\mu(Y^{(1)})) \mid \mu \vdash n\}$. By Proposition~\ref{GGGUnipotentCharactersCor}, Theorem~\ref{GGGCharacterization} and Lemma~\ref{uscftransmatrix}, $M$ is the unique unipotent lower-triangular matrix such that
\[
		(-1)^nQ(q)^{-1}z(q)X(0)^{-T}M^{-1}
\]
is upper-triangular. It suffices to show that
\[
		(-1)^nQ(q)^{-1}z(q)X(0)^{-T}K(q)^{-T}
\]
is upper-triangular.

By the orthogonality relations of $X(q)$ (see \cite[III.7]{MR1354144}), we have
\begin{align*}
		(-1)^nQ(q)^{-1}z(q)X(0)^{-T}K(q)^{-T} &= (-1)^nQ(q)^{-1}z(q)X(q)^{-T} \\
		& = (-1)^nQ(q)^{-1}X(q)b(q)^{-1} \\
		& = (-1)^n\diag(q^{-n(\lambda)})K(q^{-1})^{-1}K(q)b(q)^{-1},
		\end{align*}
which is an upper-triangular matrix.
\end{proof}

As a corollary, we determine the image of $\Gamma_\lambda$ under the characteristic map.

\begin{corollary} \label{GGGInXs} For $\lambda\in \cP$, we have that
$$\ch(\Gamma_\lambda)=(-1)^{|\lambda|} Q_\lambda(X^{(1)};q),$$
where the $Q_\lambda(X^{(1)};q)$ are the Hall-Littlewood symmetric functions \cite[III.2.11]{MR1354144}.
\end{corollary}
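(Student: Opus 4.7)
The plan is to combine Theorem \ref{unipotentmultiplicity} with a classical identity expressing the Hall--Littlewood $Q_\lambda$ as a weighted sum of Schur functions. By Theorem \ref{unipotentmultiplicity},
\[
\ch(\Gamma_\lambda) \;=\; \sum_{\mu\vdash |\lambda|} K_{\mu\lambda}(q)\,\pi_Y^{-1}\bigl(s_\mu(Y^{(1)})\bigr),
\]
so the task reduces to rewriting $\pi_Y^{-1}(s_\mu(Y^{(1)}))$ in the $X^{(1)}$ variables and matching it against the Schur expansion of $Q_\lambda$.

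First I would compute the action of $\pi_Y^{-1}$ on power sums. Lemma \ref{projectionformula} with $|f|=1$ gives $\pi_Y(p_k(X^{(1)})) = \frac{(-1)^k}{1-q^k}p_k(Y^{(1)})$, and hence $\pi_Y^{-1}(p_k(Y^{(1)})) = (-1)^k(1-q^k)p_k(X^{(1)})$. Let $\phi_q$ denote the $\CC$-algebra endomorphism of $\Sym(X^{(1)})$ defined by $\phi_q(p_k) = (1-q^k)p_k$. Expanding $s_\mu$ in the power-sum basis and applying $\pi_Y^{-1}$ termwise, the signs $(-1)^{\rho_i}$ combine to the common factor $(-1)^{|\mu|}$ on every term, yielding
\[
\pi_Y^{-1}\bigl(s_\mu(Y^{(1)})\bigr) \;=\; (-1)^{|\mu|}\,\phi_q(s_\mu)(X^{(1)}).
\]

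The final ingredient is the classical identity
\[
Q_\lambda(X;q) \;=\; \sum_{\mu\vdash|\lambda|} K_{\mu\lambda}(q)\,\phi_q(s_\mu)(X),
\]
which follows by taking duals, with respect to the $q$-deformed Hall inner product $\langle \cdot,\cdot\rangle_q$ of Macdonald III.\S 4, in the defining Kostka--Foulkes expansion $s_\mu = \sum_\lambda K_{\mu\lambda}(q)\,P_\lambda$: the bases $\{P_\lambda\}$ and $\{Q_\lambda\}$ are $\langle\cdot,\cdot\rangle_q$-dual by definition, and a short check using $\langle p_\rho,p_\sigma\rangle_q = \delta_{\rho\sigma}z_\rho/\prod_i(1-q^{\rho_i})$ together with the orthogonality of symmetric group characters shows that $\{\phi_q(s_\mu)\}$ is the $\langle\cdot,\cdot\rangle_q$-dual basis to $\{s_\mu\}$. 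Substituting into the display from Theorem \ref{unipotentmultiplicity} gives $\ch(\Gamma_\lambda) = (-1)^{|\lambda|}\,Q_\lambda(X^{(1)};q)$.

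This assembly requires no new conceptual input beyond Theorem \ref{unipotentmultiplicity}; the only delicate point is tracking the sign $(-1)^{|\mu|}$ introduced by $\pi_Y^{-1}$, which (using $|\mu|=|\lambda|$ whenever $K_{\mu\lambda}(q)\neq 0$) collapses to the overall factor $(-1)^{|\lambda|}$ in the final formula. The main potential obstacle is simply correctly identifying the classical identity for $Q_\lambda$ as the $\langle\cdot,\cdot\rangle_q$-dual of the Kostka--Foulkes expansion; once that is in hand the corollary is immediate.
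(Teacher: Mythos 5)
Your proof is correct and is essentially the natural argument one would supply for this corollary (the paper states it as an immediate consequence of Theorem~\ref{unipotentmultiplicity} without writing out a proof). You correctly read off $\ch(\Gamma_\lambda)=\sum_\mu K_{\mu\lambda}(q)\,\pi_Y^{-1}(s_\mu(Y^{(1)}))$ from the theorem, correctly compute $\pi_Y^{-1}(s_\mu(Y^{(1)}))=(-1)^{|\mu|}\phi_q(s_\mu)(X^{(1)})$ from Lemma~\ref{projectionformula}, and the identity $Q_\lambda=\sum_\mu K_{\mu\lambda}(q)\,\phi_q(s_\mu)$ is indeed the $\langle\cdot,\cdot\rangle_q$-dual of the Kostka--Foulkes expansion $s_\mu=\sum_\lambda K_{\mu\lambda}(q)P_\lambda$, using that $\{\phi_q(s_\mu)\}$ is the $q$-dual basis to $\{s_\mu\}$ (which your short computation with $\langle p_\rho,p_\sigma\rangle_q$ verifies). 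The sign bookkeeping via $|\mu|=|\lambda|$ is handled correctly. No gaps.
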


We can also describe the image of $\Gamma_\lambda$ under the composition $\pi_Y\circ \ch$. 

\begin{corollary}  \label{GGGInYs} For $\lambda\in \cP$, we have that
$$\pi_Y\circ \ch(\Gamma_\lambda)=H_\lambda(Y^{(1)};q),$$
where the $H_\lambda(Y^{(1)};q)$ are the transformed Hall-Littlewood symmetric functions \cite[3.4]{MR2051783}
\end{corollary}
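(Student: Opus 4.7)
The plan is to read off the Schur expansion of $\pi_Y\circ\ch(\Gamma_\lambda)$ directly from Theorem~\ref{unipotentmultiplicity}, and then match with the defining Schur expansion of the transformed Hall--Littlewood symmetric functions. Since $\pi_Y\circ\ch(\Gamma_\lambda)$ lies in $\Sym_n(Y^{(1)})$, and the Schur functions $\{s_\mu(Y^{(1)}) \mid \mu\vdash n\}$ form an orthonormal basis of this space under the Hall inner product, the element is determined by the coefficients $\langle \pi_Y\circ\ch(\Gamma_\lambda),\, s_\mu(Y^{(1)})\rangle$ for $\mu\vdash n$.

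To compute these coefficients, I would first apply the compatibility remark following (\ref{PiYDefinition}) in order to drop the projection:
$$\bigl\langle \pi_Y\circ\ch(\Gamma_\lambda),\, s_\mu(Y^{(1)})\bigr\rangle = \bigl\langle \ch(\Gamma_\lambda),\, s_\mu(Y^{(1)})\bigr\rangle.$$
Next, because $\ch$ is a grading-preserving algebra isomorphism that sends the orthonormal basis of irreducible characters of $\GL_n(\FF_q)$ to the orthonormal basis $\{s_\nu \mid \nu\in\cP^\Theta,\, |\nu|=n\}$ of Schur functions, it is an isometry. Hence the right-hand side above equals $\langle \Gamma_\lambda,\, \chi^{\mu^{(1)}}\rangle$, which by Theorem~\ref{unipotentmultiplicity} is the Kostka--Foulkes polynomial $K_{\mu\lambda}(q)$. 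Combining these steps,
$$\pi_Y\circ\ch(\Gamma_\lambda) = \sum_{\mu\vdash n} K_{\mu\lambda}(q)\, s_\mu(Y^{(1)}).$$

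The final step is to recognize the right-hand side as $H_\lambda(Y^{(1)};q)$. By the definition used in \cite[3.4]{MR2051783}, the transformed Hall--Littlewood polynomial is exactly $H_\lambda(Y;q) = \sum_{\mu\vdash|\lambda|} K_{\mu\lambda}(q) s_\mu(Y)$, so matching coefficient-by-coefficient yields the result. There is no substantive obstacle in this argument; the only point requiring care is verifying that the normalization of $H_\lambda$ in \cite{MR2051783} matches the Kostka--Foulkes polynomials $K_{\mu\lambda}(q)$ of Theorem~\ref{unipotentmultiplicity}, but both are built from the same transition $X(q)=K(q)\cdot\diag(q^{n(\lambda)})$ between the Schur and Hall--Littlewood bases already invoked in the proof of that theorem, so this is a routine check.
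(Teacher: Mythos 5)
Your argument is correct and is essentially the paper's own (implicit) route: Theorem~\ref{unipotentmultiplicity} already gives $\langle\Gamma_\lambda,\ch^{-1}(s_\mu(Y^{(1)}))\rangle = K_{\mu\lambda}(q)$, the remark after (\ref{PiYDefinition}) lets you pass the inner product through $\pi_Y$, and orthonormality of $\{s_\mu(Y^{(1)})\}$ gives the expansion $\pi_Y\circ\ch(\Gamma_\lambda)=\sum_\mu K_{\mu\lambda}(q)s_\mu(Y^{(1)})$, which is Haiman's definition of $H_\lambda$. The paper's remark following the corollary notes an alternative derivation via Corollary~\ref{GGGInXs} and the plethysm $\pi(g(Z))=(-1)^n g(Z/(1-q))$, but your direct reading of the transition matrix is the cleaner path and matches the paper's intent.
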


\begin{remark}
Since $\Sym(Y^{(1)})\cong \Sym(Z)\cong \Sym(X^{(1)})$,  the isomorphism $\pi_Y:\Sym(X^{(1)})\rightarrow \Sym(Y^{(1)})$  induces an automorphism $\pi:\Sym(Z)\rightarrow \Sym(Z)$.  By (\ref{VariableRelation}), $\pi$ is given by the plethysm
$$\pi(g(Z))=(-1)^n g(Z/(1-q)),\qquad \text{for $g(Z)\in \Sym_n(Z)$}.$$
From this point of view, Corollaries \ref{GGGInXs} and \ref{GGGInYs} are easily equivalent.
\end{remark}

\subsection{Multiplicities of the irreducible characters of $\GL_n(\FF_q)$}

By Theorem~\ref{unipotentmultiplicity}, we have that for $\chi^\nu$ a unipotent character, 
$$\langle \Gamma_\lambda,\chi^\nu\rangle = K_{\nu\lambda}(q).$$
A class function with unipotent support is uniquely determined by the multiplicities of the unipotent characters, thus we can use the above result to determine a formula for the multiplicities of arbitrary irreducible characters of $\GL_n(\FF_q)$ in $\Gamma_\lambda$.

For $\mu \in \cP^\Theta$, we define $\semisimple(\mu),\unipotent(\mu) \in \cP^\Theta$ by
\begin{equation*}
        \semisimple(\mu)^{(\varphi)}  = \left(1^{|\mu^{(\varphi)}|}\right) \qquad \text{and} \qquad
        \unipotent(\mu)^{(\varphi)} 
         = \left\{\begin{array}{ll}
\displaystyle{\bigcup_{\varphi \in \Theta} |\varphi|\mu^{(\varphi)}} & \quad \text{if }\varphi = \{1\}, \\
\emptyset & \quad \text{otherwise.}\end{array}\right.
\end{equation*}
\begin{remark}
If $\lambda\in \cP^\Theta$ then the Jordan decomposition of the character $\chi^\lambda$ is given by $(\chi^{\semisimple(\lambda)},\chi^{\unipotent(\lambda)})$.
\end{remark}

For $\mu$ and $\lambda$ partitions of $n$, let $\psi_\mu^\lambda$ be the value of the irreducible character $\psi^\lambda$ of $S_n$ on the conjugacy class $C_\mu$, and let $z_\mu$ be the size of the centralizer of an element of $C_\mu$. Then
\[
        s_\lambda = \sum_{\mu\vdash n} \frac{\psi_\mu^\lambda}{z_\mu} p_\mu \quad \text{and} \quad p_\mu = \sum_{\lambda\vdash n} \psi_\mu^\lambda s_\lambda.
\]
For $\nu,\mu \in P^\Theta$ with $\semisimple(\nu) = \semisimple(\mu)$, define
\[
        \psi_\mu^\nu = \prod_{\varphi \in \Theta} \psi_{\mu^{(\varphi)}}^{\nu^{(\varphi)}} \quad \text{and}\quad z_\mu = \prod_{\varphi \in \Theta} z_{\mu^{(\varphi)}}.
\]
\begin{proposition}\label{GLmultiplicities}
For $\nu\in \cP^\Theta$,
$$\langle \Gamma_\lambda,\chi^\nu\rangle=\sum_{\mu\in \cP^{\Theta}\atop \semisimple(\mu)=\semisimple(\nu)} \frac{\psi^\nu_\mu}{z_\mu} X^\lambda_{\unipotent(\mu)}(q),$$
where $X^\lambda_{\unipotent(\mu)}(q)$ is as in \cite[III.7.1]{MR1354144}.
\end{proposition}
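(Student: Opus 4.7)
The plan is to reduce the computation to an inner product in the symmetric function ring $\Sym(\GL)$, and then use the expansion of the Schur functions indexing the $\chi^\nu$ in the power-sum basis to obtain a formula involving the Green polynomials $X^\lambda_\rho(q)$ from \cite[III.7]{MR1354144}. The starting point is that $\Gamma_\lambda \in \uscf(\GL_n)$ by Proposition \ref{GGGUnipotentCharactersCor}, so $\ch(\Gamma_\lambda) \in \Sym_n(X^{(1)})$; explicitly, Corollary \ref{GGGInXs} gives $\ch(\Gamma_\lambda) = (-1)^n Q_\lambda(X^{(1)};q)$. By the remark following the definition of $\pi_Y$, the fact that $\ch(\Gamma_\lambda)$ lies in $\Sym(X^{(1)})$ yields
$$\langle \Gamma_\lambda, \chi^\nu\rangle \;=\; \langle \ch(\Gamma_\lambda), s_\nu \rangle \;=\; \langle \ch(\Gamma_\lambda), \pi_X(s_\nu)\rangle,$$
so the whole problem is to compute $\pi_X(s_\nu)$ and pair it against $Q_\lambda(X^{(1)};q)$.

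Next I would expand $s_\nu = \prod_{\vphi \in \Theta} s_{\nu^{(\vphi)}}(Y^{(\vphi)})$ in the power-sum basis. Writing $s_{\nu^{(\vphi)}}(Y^{(\vphi)}) = \sum_{\eta \vdash |\nu^{(\vphi)}|} z_\eta^{-1} \psi^{\nu^{(\vphi)}}_\eta p_\eta(Y^{(\vphi)})$ and multiplying over $\vphi$ gives
$$s_\nu \;=\; \sum_{\mu \in \cP^\Theta,\; \semisimple(\mu)=\semisimple(\nu)} \frac{\psi^\nu_\mu}{z_\mu}\, p_\mu,$$
where the constraint $\semisimple(\mu) = \semisimple(\nu)$ is simply the requirement that $|\mu^{(\vphi)}| = |\nu^{(\vphi)}|$ for every $\vphi$. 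Applying Lemma \ref{projectionformula} one factor at a time, each $p_k(Y^{(\vphi)})$ contributes $(-1)^{|\vphi|k-1} p_{|\vphi|k}(X^{(1)})$, so after tallying every part of every $\mu^{(\vphi)}$ one obtains
$$\pi_X(p_\mu) \;=\; (-1)^{n - \ell(\unipotent(\mu))}\, p_{\unipotent(\mu)}(X^{(1)}),$$
since $\sum_\vphi |\vphi||\mu^{(\vphi)}| = n$ and $\sum_\vphi \ell(\mu^{(\vphi)}) = \ell(\unipotent(\mu))$.

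For the final step, the standard Hall-pairing expansion of $Q_\lambda(X;q)$ in terms of power sums (coming from \cite[III.7.1]{MR1354144}) gives $\langle Q_\lambda(X;q), p_\rho(X)\rangle$ as a sign of the form $(-1)^{\ell(\rho)}$ times $X^\lambda_\rho(q)$. Substituting $\rho = \unipotent(\mu)$ and combining this with the $(-1)^n$ from Corollary \ref{GGGInXs} and the $(-1)^{n-\ell(\unipotent(\mu))}$ from step two makes the three signs cancel, leaving the stated formula. The main obstacle is bookkeeping these three sign contributions through the computation; the most reliable sanity check is the specialization $\nu = \mu^{(1)}$, where only the orbit $\vphi = \{1\}$ contributes, $\unipotent$ acts as the identity on $\cP$, and the formula must reduce to Theorem \ref{unipotentmultiplicity} via the classical Green-polynomial identity $K_{\mu\lambda}(q) = \sum_\eta z_\eta^{-1} \psi^\mu_\eta X^\lambda_\eta(q)$.
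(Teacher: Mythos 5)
Your first two steps coincide with the paper's: both reduce to $\langle \ch(\Gamma_\lambda), \pi_X(s_\nu)\rangle$ and both expand $s_\nu$ into power sums, noting that the surviving $\mu$'s are exactly those with $\semisimple(\mu)=\semisimple(\nu)$. Your sign bookkeeping for $\pi_X(p_\mu) = (-1)^{n-\ell(\unipotent(\mu))}p_{\unipotent(\mu)}(X^{(1)})$ is correct.

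Where you diverge is the final step, and this is where there is a real gap. You claim the ``standard Hall-pairing'' from \cite[III.7.1]{MR1354144} gives $\langle Q_\lambda(X;q), p_\rho(X)\rangle = (-1)^{\ell(\rho)} X^\lambda_\rho(q)$, but neither of the inner products one would naturally call a Hall pairing yields that. With the ordinary Hall inner product $\langle p_\rho,p_\sigma\rangle = z_\rho\delta_{\rho\sigma}$ one gets $X^\lambda_\rho(q)\prod_i(1-q^{\rho_i})$, and with the $t$-deformed Hall inner product (where $P_\lambda$ and $Q_\mu$ are dual, cf.\ \cite[III.4]{MR1354144}) one gets $X^\lambda_\rho(q)$ with no sign at all. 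The pairing you need is the one induced on $\Sym(X^{(1)})$ by the character inner product of $\GL_n(\FF_q)$, which differs from the $q$-deformed Hall pairing by exactly $(-1)^{\ell(\rho)}$ on the power-sum basis --- a fact that follows from Lemma~\ref{projectionformula} but is not in \cite[III.7.1]{MR1354144} and requires its own small argument. So your formula is correct, but as written the step is unjustified and the cited source would give the wrong answer.

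The paper avoids this pitfall by not pairing against $Q_\lambda$ directly. After reaching $\pi_X(s_\nu) = \sum_\mu \frac{\psi^\nu_\mu}{z_\mu}\pi_X(p_{\unipotent(\mu)}(Y^{(1)}))$, it re-expands $p_{\unipotent(\mu)}(Y^{(1)})$ in Schur functions, applies the already-proved Theorem~\ref{unipotentmultiplicity} to evaluate $\langle\Gamma_\lambda,\chi^{\alpha^{(1)}}\rangle = K_{\alpha\lambda}(q)$, and then collapses the resulting double sum using \cite[III.7.6]{MR1354144}, namely $X^\lambda_\rho(q)=\sum_\alpha \psi^\alpha_\rho K_{\alpha\lambda}(q)$. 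This buys you a proof that never needs to identify the $\GL_n$-inherited inner product on $\Sym(X^{(1)})$ explicitly. Your route is shorter if you first prove the inner-product comparison; otherwise, you should either supply that lemma or switch to the Schur-function detour.
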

\begin{proof}
As $\Gamma_\lambda \in \uscf(\GL)$, we have that
$$\langle \Gamma_\lambda,\chi^\nu\rangle=\langle \Gamma_\lambda,\ch^{-1}\circ\pi_X\circ\ch(\chi^\nu)\rangle.$$
By considering the change of basis matrices between the Schur functions and the power sum symmetric functions, we have that
\begin{align*}
\pi_X(s_\nu) &= \prod_{\vphi\in\Theta}\sum_{\rho \vdash |\nu^{(\varphi)}|} \frac{\psi^{\nu^{(\vphi)}}_{\rho}}{z_{\rho}}\pi_X(p_\rho(Y^{(\varphi)}))\\
&= \prod_{\vphi\in\Theta}\sum_{\rho \vdash |\nu^{(\varphi)}|} \frac{\psi^{\nu^{(\vphi)}}_{\rho}}{z_{\rho}}\pi_X(p_{|\varphi|\rho}(Y^{(1)}))\\
&= \sum_{\mu \in \cP^\Theta \atop \semisimple(\mu) = \semisimple(\nu)}\prod_{\vphi\in\Theta} \frac{\psi^{\nu^{(\vphi)}}_{\mu^{(\vphi)}}}{z_{\mu^{(\vphi)}}}\pi_X( p_{|\vphi|\mu^{(\vphi)}}(Y^{(1)}))\\
&= \sum_{\mu \in \cP^\Theta \atop \semisimple(\mu) = \semisimple(\nu)} \frac{\psi^{\nu}_{\mu}}{z_{\mu}}\pi_X(p_{\unipotent(\mu)}(Y^{(1)}))\\
&=\pi_X\bigg(
\sum_{\alpha \vdash |\nu|} \sum_{\mu\in \cP^{\Theta}\atop \semisimple(\mu)=\semisimple(\nu)} \frac{\psi^\nu_\mu}{z_\mu}  \psi^\alpha_{\unipotent(\mu)} s_\alpha\bigg).
\end{align*}
It follows that
\begin{equation*}
\langle \Gamma_\lambda,\chi^\nu\rangle
 = \sum_{\alpha \vdash |\nu|}  \sum_{\mu\in \cP^{\Theta}\atop \semisimple(\mu)=\semisimple(\nu)} \frac{\psi^\nu_\mu}{z_\mu}  \psi^\alpha_{\unipotent(\mu)} K_{\alpha\lambda}(q) =\sum_{\mu\in \cP^{\Theta}\atop \semisimple(\mu)=\semisimple(\nu)} \frac{\psi^\nu_\mu}{z_\mu} X^\lambda_{\unipotent(\mu)}(q),
\end{equation*}
with the last equality by \cite[III.7.6]{MR1354144}.
\end{proof}

When the irreducible character is a product of cuspidal characters, this equation simplifies nicely.

\begin{corollary}
If $|\nu^{(\vphi)}|\leq 1$ for all $\vphi\in \Theta$, then
$$\langle \Gamma_\lambda,\chi^\nu\rangle = X_{\unipotent(\nu)}^\lambda(q).$$
\end{corollary}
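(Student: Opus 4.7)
The plan is to specialize Proposition \ref{GLmultiplicities} to the hypothesis that every $\nu^{(\vphi)}$ has size at most one, and observe that the sum collapses to a single term.

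First I would analyze which $\mu\in\cP^\Theta$ appear in the sum $\sum_{\semisimple(\mu)=\semisimple(\nu)} \frac{\psi^\nu_\mu}{z_\mu} X^\lambda_{\unipotent(\mu)}(q)$. The condition $\semisimple(\mu)=\semisimple(\nu)$ forces, component by component, that $(1^{|\mu^{(\vphi)}|}) = (1^{|\nu^{(\vphi)}|})$ and hence $|\mu^{(\vphi)}|=|\nu^{(\vphi)}|$ for every $\vphi\in\Theta$. Under the hypothesis $|\nu^{(\vphi)}|\leq 1$, each $\mu^{(\vphi)}$ is therefore a partition of at most $1$, so it is uniquely determined: $\mu^{(\vphi)}=\nu^{(\vphi)}$. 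Thus $\mu=\nu$ is the only index surviving in the sum.

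Next I would evaluate the scalar $\psi^\nu_\nu/z_\nu$. By the multiplicative definitions $\psi^\nu_\mu=\prod_{\vphi}\psi^{\nu^{(\vphi)}}_{\mu^{(\vphi)}}$ and $z_\mu=\prod_{\vphi}z_{\mu^{(\vphi)}}$, each local factor is indexed by a partition of size $\leq 1$. For the empty partition both $\psi^{\emptyset}_{\emptyset}$ and $z_{\emptyset}$ equal $1$, and for the single-box partition $(1)$ the character value $\psi^{(1)}_{(1)}$ on the unique conjugacy class of $S_1$ is $1$ and the centralizer size $z_{(1)}=1$. Hence $\psi^\nu_\nu/z_\nu=1$.

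Substituting into Proposition~\ref{GLmultiplicities} gives
\[
\langle \Gamma_\lambda,\chi^\nu\rangle = \frac{\psi^\nu_\nu}{z_\nu}\, X^\lambda_{\unipotent(\nu)}(q) = X^\lambda_{\unipotent(\nu)}(q),
\]
which is the desired formula. There is essentially no obstacle here: the entire content is verifying that the collapse of the indexing set is genuine and that the trivial symmetric-group character values and centralizer sizes for one-box partitions contribute a factor of $1$. The only point worth being careful about is checking that the empty-partition components behave as claimed under the conventions $\psi^\emptyset_\emptyset=z_\emptyset=1$, which is standard.
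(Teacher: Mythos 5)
Your proof is correct and follows the same route the paper intends: specialize Proposition~\ref{GLmultiplicities}, observe that the constraint $\semisimple(\mu)=\semisimple(\nu)$ together with $|\nu^{(\vphi)}|\leq 1$ forces $\mu=\nu$, and note that $\psi^\nu_\nu/z_\nu=1$. The paper states this corollary without detail precisely because the sum collapses in this way, so your argument matches what is implicitly being invoked.
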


In particular, if $\lambda$ is cuspidal, then \cite[III.7.E2]{MR1354144} gives the following result.

\begin{corollary}
If $\nu\in \cP^\Theta_N$ satisfies $\nu^{(\vphi)}=(1)$ for some $\vphi\in \Theta$ with $|\vphi|=N$, then
$$\langle \Gamma_\lambda,\chi^\nu\rangle = q^{n(\lambda)}\prod_{i=1}^{\ell(\lambda)-1}(1-q^{-i}).$$
\end{corollary}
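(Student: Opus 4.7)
The plan is a direct reduction to the preceding corollary combined with a standard explicit formula from Macdonald.

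First I would unpack the hypothesis on $\nu$. Since $\nu^{(\vphi)}=(1)$ for exactly one $\vphi\in\Theta$ with $|\vphi|=N$, we have $|\nu|=|\vphi|\cdot|\nu^{(\vphi)}|=N$, so $|\lambda|=N$ is forced for the inner product not to be trivially zero. Moreover $|\nu^{(\vphi')}|\le 1$ for every $\vphi'\in\Theta$, so the hypothesis of the immediately preceding corollary is satisfied; it gives
$$\langle \Gamma_\lambda,\chi^\nu\rangle=X^\lambda_{\unipotent(\nu)}(q).$$

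Next I would identify $\unipotent(\nu)$ explicitly. By its definition, $\unipotent(\nu)^{(\vphi')}=\emptyset$ for $\vphi'\ne\{1\}$, and
$$\unipotent(\nu)^{(\{1\})}=\bigcup_{\vphi'\in\Theta}|\vphi'|\nu^{(\vphi')}=|\vphi|\cdot(1)=(N),$$
since only the $\vphi$-component of $\nu$ is nonempty. Therefore $\unipotent(\nu)$ is the single-part partition $(N)$ (living in the $\{1\}$-slot), and we need to evaluate $X^\lambda_{(N)}(q)$, i.e., the coefficient in the expansion of the power-sum $p_N(X^{(1)})$ in the modified Hall--Littlewood basis $\widetilde{P}_\lambda(X^{(1)};q)$.

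Finally I would cite Macdonald \cite[III.7.E2]{MR1354144}, which supplies the explicit formula
$$X^\lambda_{(N)}(q)=q^{n(\lambda)}\prod_{i=1}^{\ell(\lambda)-1}(1-q^{-i}),$$
yielding the desired value of $\langle \Gamma_\lambda,\chi^\nu\rangle$. The only real ``work'' here is the bookkeeping check that $\unipotent(\nu)$ collapses to $(N)$ on the distinguished component; everything substantial has already been done in the previous corollary (reducing from an arbitrary irreducible character to an $X^\lambda$-value) and in Macdonald's reference (evaluating $X^\lambda$ at a single-part power-sum), so there is no serious obstacle beyond confirming the exact normalization in Macdonald's notation matches ours.
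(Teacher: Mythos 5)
Your proposal is correct and follows exactly the paper's intended argument: the paper states this corollary as an immediate consequence of the preceding one together with Macdonald's formula \cite[III.7.E2]{MR1354144}, and your write-up spells out the same reduction (check that all components other than $\nu^{(\vphi)}$ are empty so the previous corollary applies, observe $\unipotent(\nu)=(N)$, then quote Macdonald). There is no meaningful difference in approach.
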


We can also consider the opposite extreme, where $\nu\in\cP^{\Theta}$ satisfies $\nu^{(\vphi)}=\emptyset$ for $|\vphi|>1$. Recall that if $\mu$ and $\nu$ are partitions, the \emph{Littlewood--Richardson coefficients} $c_{\mu\nu}^\lambda$ are defined by
\[
        s_\mu s_\nu = \sum_{\lambda \vdash n} c_{\mu\nu}^\lambda s_\lambda,
\]
as in \cite[I.9.1]{MR1354144}. For $\nu \in \cP^\Theta$ with $\nu^{(\vphi)}=\emptyset$ for $|\vphi|>1$, we define $c_\nu^\mu$ by
\[
     \prod_{\varphi \in \Theta} s_{\nu^{(\varphi)}} = \sum_{\mu \vdash |\nu|} c_\nu^\mu s_\mu.
\]

\begin{proposition}\label{LRmultiplicities}
If $\nu\in\cP^{\Theta}$ satisfies $\nu^{(\vphi)}=\emptyset$ for $|\vphi|>1$, then
$$\langle \Gamma_\lambda,\chi^\nu\rangle=\sum_{\mu\vdash|\lambda|} c_{\nu}^\mu K_{\mu\lambda}(q).$$
\end{proposition}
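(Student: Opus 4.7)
The plan is to derive this by substituting into Proposition~\ref{GLmultiplicities} and unraveling the symmetric-group character identity implicit in $c^\mu_\nu$. Since $\nu^{(\vphi)}=\emptyset$ for $|\vphi|>1$, the constraint $\semisimple(\mu)=\semisimple(\nu)$ forces $\mu^{(\vphi)}=\emptyset$ for $|\vphi|>1$ as well, so $\unipotent(\mu)$ is supported entirely at $\vphi=\{1\}$, with $\unipotent(\mu)^{(1)}$ equal to the union of the partitions $\mu^{(\vphi)}$ (for $|\vphi|=1$) viewed as a single integer partition of $|\nu|$.

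First I would apply Proposition~\ref{GLmultiplicities} to write
\[
\langle\Gamma_\lambda,\chi^\nu\rangle=\sum_{\mu\in\cP^\Theta\atop \semisimple(\mu)=\semisimple(\nu)}\frac{\psi^\nu_\mu}{z_\mu}\,X^\lambda_{\unipotent(\mu)}(q),
\]
and then expand $X^\lambda_{\unipotent(\mu)}(q)$ using \cite[III.7.6]{MR1354144} as
\(
X^\lambda_{\unipotent(\mu)}(q)=\sum_{\alpha\vdash|\lambda|}\psi^\alpha_{\unipotent(\mu)}K_{\alpha\lambda}(q).
\)
Interchanging the sums, the target identity reduces to the purely symmetric-function claim
\[
\sum_{\mu\in\cP^\Theta\atop \semisimple(\mu)=\semisimple(\nu)}\frac{\psi^\nu_\mu}{z_\mu}\,\psi^\alpha_{\unipotent(\mu)} \;=\; c^\alpha_\nu \qquad (\alpha\vdash|\nu|).
\]

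To verify this identity, I would compute $\prod_{\vphi\in\Theta}s_{\nu^{(\vphi)}}$ in two ways. On the one hand it equals $\sum_{\alpha\vdash|\nu|}c^\alpha_\nu s_\alpha$ by definition of $c^\alpha_\nu$. On the other hand, using $s_{\nu^{(\vphi)}}=\sum_{\mu^{(\vphi)}}\frac{\psi^{\nu^{(\vphi)}}_{\mu^{(\vphi)}}}{z_{\mu^{(\vphi)}}}p_{\mu^{(\vphi)}}$ and the multiplicativity $p_{\mu^{(\vphi)}}p_{\mu^{(\vphi')}}=p_{\mu^{(\vphi)}\cup\mu^{(\vphi')}}$ of power sums under concatenation of partitions, the product becomes
\[
\prod_{\vphi\in\Theta}s_{\nu^{(\vphi)}}=\sum_{\mu\in\cP^\Theta\atop \semisimple(\mu)=\semisimple(\nu)}\frac{\psi^\nu_\mu}{z_\mu}\,p_{\unipotent(\mu)^{(1)}}.
\]
Expanding each $p_{\unipotent(\mu)^{(1)}}=\sum_\alpha \psi^\alpha_{\unipotent(\mu)}s_\alpha$ and comparing coefficients of $s_\alpha$ yields the desired identity, completing the proof.

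The main obstacle is purely bookkeeping: keeping track of the product-versus-partition structure of $\unipotent(\mu)$ and checking that the hypothesis $\nu^{(\vphi)}=\emptyset$ for $|\vphi|>1$ is exactly what lets the Frobenius-type identity $\prod_\vphi s_{\nu^{(\vphi)}}=\sum c^\alpha_\nu s_\alpha$ (which lives in a single copy of $\Sym$) correctly capture the multi-orbit sum. No deep technology is required beyond the power-sum expansion of Schur functions and the identity from \cite[III.7.6]{MR1354144} already used in the preceding proposition.
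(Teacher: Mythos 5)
Your argument is correct, but it takes a different route from the paper. The paper proves Proposition~\ref{LRmultiplicities} independently of Proposition~\ref{GLmultiplicities}: it computes $\pi_X(s_\nu)$ directly, using Lemma~\ref{projectionformula} to see that $\pi_X(s_{\nu^{(\varphi)}}(Y^{(\varphi)}))=\pi_X(s_{\nu^{(\varphi)}}(Y^{(1)}))$ when $|\varphi|=1$, so that $\pi_X(s_\nu)=\pi_X\big(\prod_\varphi s_{\nu^{(\varphi)}}(Y^{(1)})\big)$, and then applies the definition of $c_\nu^\mu$ and Theorem~\ref{unipotentmultiplicity}. You instead derive Proposition~\ref{LRmultiplicities} \emph{from} Proposition~\ref{GLmultiplicities}, by expanding $X^\lambda_{\unipotent(\mu)}(q)=\sum_\alpha\psi^\alpha_{\unipotent(\mu)}K_{\alpha\lambda}(q)$ and reducing to the purely $S_n$-character identity
\[
\sum_{\mu\in\cP^\Theta\atop \semisimple(\mu)=\semisimple(\nu)}\frac{\psi^\nu_\mu}{z_\mu}\,\psi^\alpha_{\unipotent(\mu)} \;=\; c^\alpha_\nu,
\]
which you then verify by expanding $\prod_\varphi s_{\nu^{(\varphi)}}$ in power sums. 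This is essentially the content of the paper's remark immediately after Proposition~\ref{LRmultiplicities}, which observes that the two propositions are linked precisely by writing Littlewood--Richardson coefficients in terms of the $S_n$ character table via the power-sum expansion. Your route is a bit longer (it routes through the $X^\lambda_\mu(q)$ matrix rather than using the projection $\pi_X$ at Schur-function level) but has the pedagogical advantage of making Proposition~\ref{LRmultiplicities} a visible corollary of Proposition~\ref{GLmultiplicities}, whereas the paper's proof is shorter and keeps the two results logically parallel. One small point of care you handled correctly: the hypothesis $\nu^{(\varphi)}=\emptyset$ for $|\varphi|>1$, combined with $\semisimple(\mu)=\semisimple(\nu)$, forces $\mu^{(\varphi)}=\emptyset$ for $|\varphi|>1$, so that $\unipotent(\mu)^{(1)}=\bigcup_{|\varphi|=1}\mu^{(\varphi)}$ without the factors $|\varphi|$ that would otherwise appear; this is exactly what makes the power-sum concatenation $\prod_\varphi p_{\mu^{(\varphi)}}=p_{\unipotent(\mu)^{(1)}}$ work.
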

\begin{proof}
As $\Gamma_\lambda \in \uscf(\GL)$, we have that
$$\langle \Gamma_\lambda,\chi^\nu\rangle=\langle \Gamma_\lambda,\ch^{-1}\circ\pi_X\circ\ch(\chi^\nu)\rangle.$$
By Lemma \ref{projectionformula}, if $|\varphi|=1$ then $\pi_X\left(s_{\nu^{(\varphi)}}\left(Y^{(\varphi)}\right)\right) =\pi_X\left(s_{\nu^{(\varphi)}}\left(Y^{(1)}\right)\right)$. It follows that
\begin{align*}
\pi_X(s_\nu)&=\prod_{\vphi\in \Theta} \pi_X\left(s_{\nu^{(\vphi)}}\left(Y^{(\vphi)}\right)\right)\\
&=\pi_X\left(\prod_{\vphi\in \Theta} s_{\nu^{(\vphi)}}\left(Y^{(1)}\right)\right).
\end{align*}
By definition of the Littlewood--Richardson coefficients,
$$\prod_{\vphi\in \Theta} s_{\nu^{(\vphi)}}(Y^{(1)})=\sum_{\mu\vdash |\lambda|} c_\nu^\mu s_\mu.$$
Thus we have that
$$\langle \Gamma_\lambda,\chi^\nu\rangle=\langle \Gamma_\lambda,\prod_{\vphi\in \Theta} s_{\nu^{(\vphi)}}(Y^{(1)})\rangle=\sum_{\mu\vdash|\lambda|} c_{\nu}^\mu K_{\mu\lambda}(q),$$
as desired.
\end{proof}

\begin{remark}
Proposition~\ref{GLmultiplicities} and Proposition~\ref{LRmultiplicities} give us two different formulas for the multiplicities of the irreducible characters that correspond to $\nu\in\cP^{\Theta}$ satisfying $\nu^{(\vphi)}=\emptyset$ for $|\vphi|>1$. The connection between these formulas is that the Littlewood-Richardson coefficients can be written in terms of the character table of the symmetric group by first rewriting a product of Schur functions in terms of power sum symmetric functions, then multiplying, and finally translating back to Schur functions.
\end{remark}

\def\cprime{$'$} \def\cprime{$'$}

\end{document}